\newlist{axiomlist}{enumerate}{1}
\setlist[axiomlist]{label=(S\arabic*),ref=(S\arabic*)}
\newtheorem{theorem}{Theorem}[section]
\newtheorem{lemma}[theorem]{Lemma}
\newtheorem{corollary}[theorem]{Corollary}
\newtheorem{proposition}[theorem]{Proposition}
\newtheorem{question}[theorem]{Question}
\newtheorem{claim}{Claim}
\newtheorem*{claim*}{Claim}
\newtheorem*{subclaim*}{Subclaim}
\theoremstyle{definition}
\newtheorem{remark}[theorem]{Remark}
\newtheorem{definition}[theorem]{Definition}
\newtheorem{example}[theorem]{Example}
\newtheorem{convention}[theorem]{Convention}
\newcommand{\acts}{\curvearrowright}
\newcommand{\mc}[1]{\mathcal{#1}}
\newcommand{\from}{\colon\thinspace}
\newcommand{\Isom}{\operatorname{Isom}}
\newcommand{\diam}{\operatorname{diam}}
\newcommand{\bR}{\mathbb{R}}
\newcommand{\bZ}{\mathbb{Z}}
\newcommand{\OutFn}{\operatorname{Out}(F_n)}
\newcommand{\Stab}{\operatorname{Stab}}
\newcommand{\te}{t^{-n}A_\emptyset}
\newcommand{\Cay}{\operatorname{Cay}}
\newcommand{\putinbox}[1]{\noindent\fbox{\parbox{\textwidth}{#1}}}
\begin{document}

\title[A/QI triples]{Acylindrically hyperbolic groups and their quasi-isometrically embedded subgroups}

\author[C.R. Abbott]{Carolyn R. Abbott}
\address{Department of Mathematics, Brandeis University, Waltham, MA 02453}
\email{carolynabbott@brandeis.edu}

\author[J.F. Manning]{Jason F. Manning}
\address{Department of Mathematics, 310 Malott Hall, Cornell University, Ithaca, NY 14853}
\email{jfmanning@math.cornell.edu}

\begin{abstract}
  We abstract the notion of an \emph{A/QI triple} from a number of examples in geometric group theory.  Such a triple $(G,X,H)$ consists of a group $G$ acting on a Gromov hyperbolic space $X$, acylindrically along a finitely generated subgroup $H$ which is quasi-isometrically embedded by the action.  Examples include strongly quasi-convex subgroups of relatively hyperbolic groups, convex cocompact subgroups of mapping class groups, many known convex cocompact subgroups of $\OutFn$, and groups generated by powers of independent loxodromic WPD elements of a group acting on a Gromov hyperbolic space.
  We initiate the study of intersection and combination properties of A/QI triples.
  Under the additional hypothesis that $G$ is finitely generated, we use a method of Sisto to show that $H$ is stable.  We apply theorems of Kapovich--Rafi and Dowdall--Taylor to analyze the Gromov boundary of an associated cone-off.  We close with some examples and questions.  
\end{abstract}
\maketitle

\section{Introduction}
There are two important threads in the study of group actions on hyperbolic spaces which we bring together in this paper.  One is the topic of quasi-convex subgroups of hyperbolic groups.  Gromov introduced these in~\cite{Gromov:HG} as the geometrically natural subgroups of a hyperbolic group, and they have been intensively studied since then.  Without pretending to give a full literature review, we mention in particular some previous work closely related to what we do here.  Gitik--Mitra--Rips--Sageev study finiteness and intersection properties of quasi-convex subgroups and show that they have finite height and width \cite{GMRS}.  Bowditch shows that cone-offs of hyperbolic spaces by families of quasi-convex subspaces are themselves hyperbolic \cite{Bowditch12}.
Other important early papers related to our current work are~\cite{Short91,Swenson01}.  

The other thread is that of acylindrical (and weakly properly discontinuous) actions of groups on hyperbolic spaces.  Acylindrical actions on \emph{trees} were studied first by Sela~\cite{Sela97}.  Bowditch formulated the acylindricity condition for actions on hyperbolic spaces in~\cite{Bowditch08}, showing in particular that the action of the mapping class group of a surface on the curve complex is acylindrical.  Osin in~\cite{Osin16} shows that the \emph{existence} of an acylindrical action of a group $G$ on some hyperbolic space is equivalent to the existence of a hyperbolically embedded subgroup, or of a weakly properly discontinuous element for the action of $G$ on some possibly different hyperbolic space, clarifying the existence of an extremely natural class of \emph{acylindrically hyperbolic groups}.  For such a group, Abbott--Balasubramanya--Osin explore the poset of acylindrical actions on hyperbolic spaces in~\cite{ABO}; this poset is further explored in~\cite{Abbott16,ABD,Balasubramanya,Val}.

In this paper we are interested in understanding actions of a group on hyperbolic spaces which are not necessarily acylindrical, but satisfy the weaker condition of being acylindrical along a finitely generated subgroup which is assumed to be quasi-isometrically embedded by the action. 
\begin{definition}\label{def:acyl}
Suppose $(X,d)$ is a Gromov hyperbolic geodesic metric space, and $Y\subseteq X$.  An isometric action $G\acts X$ is \emph{acylindrical along $Y$} if for every $\varepsilon \ge 0$, there are $R = R(\varepsilon, Y)> 0, M = M(\varepsilon, Y) > 0$ so that, whenever $x,y\in Y$ satisfy $d(x,y)\ge R$,
\begin{equation}\label{eq:acyldef}
  \#\{g\in G\mid d(gx,x)\le \varepsilon\mbox{ and }d(gy,y)\le \varepsilon\}\le M.
\end{equation} 
\end{definition}
While the constants $R, M$ are sometimes called the constants of acylindricity of the action, we reserve this terminology for the constants in an alternate characterization of acylindricity  which is easier to check in our context (see Definition~\ref{def:newacyl}).    If $G\acts X$ is acylindrical along $Y$, then it is acylindrical along $W$ for any $W$ which is finite Hausdorff distance from $Y$.  It is also acylindrical along any translate $gY$.
This notion interpolates between an acylindrical action and an action with a loxodromic WPD element (see Definition \ref{def:WPD}), in the following sense.
  The usual notion of an acylindrical action $G\acts X$ is one which is acylindrical along the entire space $X$, while an action $G\acts X$ with a loxodromic WPD element $g$ is acylindrical along any axis for $g$.

If $H<G$, we say that $G\acts X$ is \emph{acylindrical along $H$} if it is acylindrical along $Hx$ for some (equivalently any) $x\in X$.  
The main new definition of the current paper is the following:
\begin{definition}
  An \emph{A/QI triple} $(G,X,H)$ consists of a Gromov hyperbolic space $X$ and an action $G\acts X$ which is acylindrical along the finitely generated infinite subgroup $H$, and so that $x\mapsto hx$ gives a quasi-isometric embedding of $H$ into $X$.
  
  Whenever this final condition is satisfied, we say that $H$ is \emph{quasi-isometrically embedded by the action}.
\end{definition}

For example, if $G$ is hyperbolic, $H<G$ is quasi-convex, and $\Gamma$ is a Cayley graph of $G$, then $(G,\Gamma,H)$ is an A/QI triple.  More interesting motivating examples are convex cocompact subgroups of mapping class groups.
In this case,  if $H$ is a convex cocompact subgroup of a mapping class group $\operatorname{Mod}(\Sigma)$ of a non-exceptional surface $\Sigma$, and $\mathcal{C}$ is the curve complex of $\Sigma$, then $(\operatorname{Mod}(\Sigma),\mathcal{C},H)$ is an A/QI triple.  In this situation, the action of $G$ on $\mathcal C$ is a genuine acylindrical action.  Other motivating examples include certain convex cocompact subgroups $H$ of $\OutFn$, the outer automorphism group of the free group of rank $n$.  Indeed for many subgroups $H$, if $X$ is the free factor complex, the cyclic splitting complex, or the free splitting complex, then $(\OutFn, X, H)$ is an A/QI triple (see Example \ref{ex:outfn}).  None of these actions $\OutFn\acts X$ are known to be acylindrical, and in the case where $X$ is the free splitting complex, it is known that the action is not acylindrical.   That these examples are A/QI triples is an application of a general construction of A/QI triples from an action of a group on a hyperbolic space with a loxodromic WPD element (Corollary \ref{prop:WPDtoAQI}).  We use this construction to give additional examples of A/QI triples in the Cremona group and FC-type Artin groups (see Examples \ref{ex:cremona} and \ref{ex:artin}), neither of which have
known \emph{natural} acylindrical actions on hyperbolic spaces.

\begin{remark} \label{rmk:sisto}
In \cite{Sisto16},  Sisto gives a more flexible definition of an action being acylindrical along a subspace $Y$, omitting the number $M$ and replacing the inequality ``$\le M$'' in~\eqref{eq:acyldef} by ``$<\infty$''.  We will refer to an action satisfying this weaker condition as being \emph{weakly acylindrical along $Y$}.
In general an action can be weakly acylindrical along a subspace without being acylindrical along that subspace.  However if $X$ is hyperbolic, $Y$ is a quasi-convex subspace, and $d_X|_Y$ is a proper metric, then any action which is weakly acylindrical along $Y$ is acylindrical along $Y$.  In particular, if $Y=Hx$ and $H$ is quasi-isometrically embedded by the action, then the action is acylindrical along $Y$ if and only if it is weakly acylindrical along $Y$.
\end{remark}

In this paper, we study three aspects of A/QI triples $(G,X,H)$:  intersection and combination properties, stability of the subgroup $H$, and the boundary of a certain coned-off space of $X$ constructed from $(G,X,H)$. 

Motivated by the fact that the intersection of two quasi-convex subgroups of a hyperbolic group is quasi-convex, we first consider the intersection of A/QI triples.  We show that if $(G,X,H)$ and $(G,X,K)$ are A/QI triples, then $(G,X,H\cap K)$ is also an A/QI triple (Proposition \ref{prop:intersectedtriples}).   We also prove the following combination theorem for A/QI triples.  (Here we use the notation $\Lambda(H)$ for the \emph{limit set} of $H$; see Definition~\ref{def:limitset}.)

\begin{restatable}{theorem}{combination}\label{thm:combination}
  Let $G$ act on a hyperbolic space $X$, and suppose $H_1,\dots, H_k$ are subgroups of $G$ such that $(G,X,H_i)$ is an A/QI triple for each $i=1,\dots, k$ and $\Lambda(H_i)\cap \Lambda(H_j)=\emptyset$ for all $i\neq j$.  Then there exists a finite collection of elements $\mathcal S\subset \cup_{1=1}^k H_i\setminus\{1\}$ such that  for any family of subgroups $\mc{K}= \{K_i\}_{i = 1}^k$ with $K_i$ quasi-isometrically embedded in $H_i$ and satisfying
  $K_i\cap \mathcal S=\emptyset$ for all $i$, we have $K=\langle \bigcup\mc{K}\rangle\cong K_1\ast K_2\ast \cdots \ast K_k$ and $(G,X,K)$ is an A/QI triple.
\end{restatable}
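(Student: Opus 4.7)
The plan is a ping-pong argument using the orbits $Y_i := H_i x_0$ for a chosen basepoint $x_0 \in X$. By the QI-embedding hypothesis, each $Y_i$ is quasi-convex in $X$ with limit set $\Lambda(H_i)$, and disjointness of the limit sets yields the standard bounded intersection property: for any $D$ there is $B = B(D)$ such that $\diam(N_D(gY_i) \cap N_D(g'Y_j)) \leq B$ whenever the translates $gY_i \neq g'Y_j$ as subsets. I define $\mathcal{S}$ to be the finite set of ``short'' elements $\{h \in \bigcup_i H_i : d(hx_0, x_0) \leq C\}$, with $C$ chosen once and for all sufficiently large in terms of $\delta$, $B$, and the QI-constants of the $H_i$. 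Finiteness of $\mathcal{S}$ follows from properness of each orbit map $H_i \to X$.

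For QI-embedded $K_i \leq H_i$ with $K_i \cap \mathcal{S} = \emptyset$ and a reduced word $w = g_1 \cdots g_n$ in the formal free product (with $g_j \in K_{i_j}$ and $i_j \neq i_{j+1}$), the orbit path $x_0, g_1 x_0, \ldots, w x_0$ has its $j$-th leg lying in the translate $k_j Y_{i_j}$, where $k_j := g_1 \cdots g_{j-1}$. Each leg is a uniform quasi-geodesic of length at least $C$, and consecutive legs lie in distinct translates whose bounded intersection prevents backtracking. A standard concatenation lemma for quasi-geodesics in a hyperbolic space then shows that the full orbit path is itself a uniform quasi-geodesic. This simultaneously gives $K \cong K_1 \ast \cdots \ast K_k$ (nontrivial words yield $wx_0 \neq x_0$), QI-embedding of the orbit map $K \to X$, and quasi-convexity of $Kx_0$ in $X$.

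For acylindricity of $G \acts X$ along $K$, by Remark \ref{rmk:sisto} together with quasi-convexity of $Kx_0$ and properness of $d|_{Kx_0}$ (from the QI-embedding), it suffices to establish weak acylindricity: for every $\epsilon$, there exists $R = R(\epsilon)$ so that for any $x, y \in Kx_0$ with $d(x,y) \geq R$, only finitely many $g \in G$ satisfy $d(gx,x), d(gy,y) \leq \epsilon$. Hyperbolicity and stability of quasi-geodesics imply that the image $g\gamma$ of the orbit quasi-geodesic $\gamma$ from $x$ to $y$ fellow-travels $\gamma$ to within a constant $\epsilon'$ depending only on $\epsilon$ and the hyperbolicity/QI-constants. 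Bounded intersection applied to each leg of $\gamma$ (of length $\geq C > B$) then forces $g$ to send the translate $k_j Y_{i_j}$ onto some translate $k_{j+s} Y_{i_j}$ of the \emph{same} $Y_i$; the shift $s$ is bounded in absolute value by a constant depending only on $\epsilon$ and $C$, so takes only finitely many values.

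The main obstacle is converting these coset constraints $g \in k_{j+s} H_{i_j} k_j^{-1}$ into finiteness of $g$. The strategy is to fix the shift $s$ (finitely many choices) and combine the constraints at two widely-separated positions $j_1, j_2$ along $\gamma$: writing $g = k_{j_1+s} h k_{j_1}^{-1}$, the constraint at $j_2$ places $g$ in an intersection of conjugates of (possibly distinct) $H_i$-subgroups. By Proposition \ref{prop:intersectedtriples}, such an intersection is itself the subgroup of an A/QI triple, and its limit set is contained in the intersection of conjugate limit sets; for $R$ large the relevant limit sets become disjoint (by bounded intersection applied at large scale), forcing the intersection to have bounded orbits and hence, via the acylindricity hypotheses along the individual $H_i$, to be finite. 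Summing over the finitely many admissible shifts $s$ yields a finite set of $g$, establishing weak acylindricity and completing the proof via Remark \ref{rmk:sisto}.
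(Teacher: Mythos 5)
Your first half (the free-product structure and the uniform quasi-isometric embedding of the orbit of $K$) is essentially the paper's argument: long legs plus a uniform bound on Gromov products at the wedge points, fed into a concatenation lemma, and at the wedge points one only ever compares the based pairs $Y_i, Y_j$ with $i\neq j$, where the hypothesis $\Lambda(H_i)\cap\Lambda(H_j)=\emptyset$ really does apply. The problem is the ``standard bounded intersection property'' you invoke, namely $\diam\left(\mathcal N_D(gY_i)\cap \mathcal N_D(g'Y_j)\right)\le B(D)$ for \emph{all} pairs of distinct translates: this is false under the hypotheses, and your acylindricity argument leans on it essentially. Disjointness of $\Lambda(H_i)$ and $\Lambda(H_j)$ says nothing about translated limit sets $g\Lambda(H_i)$ versus $g'\Lambda(H_j)$, and for a single $H_i$ the claim for distinct translates $gY_i\neq g'Y_i$ amounts to saying distinct cosets of $H_i$ have uniformly bounded coarse intersection, i.e.\ that $H_i$ is essentially almost malnormal. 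Subgroups in A/QI triples typically have height greater than one (as the introduction notes), so there can be $g$ with $H_i\cap gH_ig^{-1}$ infinite, in which case $\mathcal N_D(Y_i)\cap\mathcal N_D(gY_i)$ has infinite diameter. Acylindricity along $H_i$ in the form of Theorem~\ref{thm:newacyl} bounds only the \emph{number} of distinct cosets whose $\varepsilon$--neighborhoods share a long intersection; it never bounds the diameter of the overlap of a single pair.

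Consequently the step ``bounded intersection forces $g$ to send $k_jY_{i_j}$ onto some translate $k_{j+s}Y_{i_j}$ of the same $Y_i$'' fails: when the height exceeds one, $g k_jY_{i_j}$ can fellow-travel some $k_{j'}Y_{i_{j'}}$ along an arbitrarily long segment without coinciding with it, so you obtain no coset constraint and no finite list of shifts $s$. The concluding finiteness step is also not valid as stated: the set of $g$ satisfying two such coset constraints is a coset of an intersection of two conjugates of the $H_i$'s, and nothing in the hypotheses makes the corresponding (translated) limit sets disjoint---limit sets do not change as $R$ grows---so Proposition~\ref{prop:intersectedtriples} cannot force that intersection to be finite, and in general it is not. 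The paper sidesteps both issues by verifying acylindricity along $K$ in the coset-counting form of Definition~\ref{def:many subgroups}/Theorem~\ref{thm:newacyl}: a long common intersection of $\varepsilon$--neighborhoods of many cosets $gKx_0$ is shown to force long subsegments of the corresponding broken paths to lie in single translates of the $Y_i$, producing a long common intersection of neighborhoods of such translates; the count is then bounded by $2NP$, where $N$ comes from acylindricity along the collection $\{H_1,\dots,H_k\}$ and $P$ is a pigeonhole bound on elements of $H_i$ displacing $x_0$ a bounded amount. If you wish to keep your route through weak acylindricity and Remark~\ref{rmk:sisto}, you still need a counting argument of this type; the bounded-intersection shortcut is not available in this setting.
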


Our first finiteness result is that for any A/QI triple $(G,X,H)$, the subgroup $H$ has \emph{finite height} in $G$ (see Proposition~\ref{prop:finiteheight}).  Finite height is a weakening of malnormality which has proved extremely useful in studying residual finiteness and cubulability of (relatively) hyperbolic groups.  See for example \cite{AGM09,Agol13,HruskaWise14}.

Restricting our attention to A/QI triples $(G,X,H)$ where $G$ is finitely generated, we obtain a stronger result.
\begin{restatable}{theorem}{stable}\label{thm:stable}
  Let $(G,X,H)$ be an A/QI triple, and suppose that $G$ is finitely generated. Then $H$ is stable in $G$.
\end{restatable}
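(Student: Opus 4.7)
Stability of $H$ in $G$ is equivalent to $H$ being undistorted in $G$ plus the Morse property: a function $M(\lambda,c)$ such that every $(\lambda,c)$-quasi-geodesic in $G$ with endpoints in $H$ lies in the $M(\lambda,c)$-neighborhood of $H$. Undistortion is immediate from the A/QI hypothesis. Fix a finite generating set $S$ of $G$ and let $\pi\co\Cay(G,S)\to X$, $g\mapsto gx_0$, be the orbit map, which is $L$-Lipschitz for $L=\max_{s\in S}d_X(x_0,sx_0)$. Since $\pi|_H$ is a quasi-isometric embedding and factors through the $1$-Lipschitz inclusion $H\hookrightarrow G$, the inclusion is also a quasi-isometric embedding.

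For the Morse property I would follow the method of Sisto \cite{Sisto16}, arguing by contradiction. Suppose there were $(\lambda,c)$-quasi-geodesics $\gamma_n$ in $\Cay(G,S)$ with endpoints in $H$ and points $g_n\in\gamma_n$ with $d_G(g_n,H)\to\infty$. Pushing forward by $\pi$ yields Lipschitz paths in $X$ whose endpoints lie on $Y=Hx_0$. The plan is first to use hyperbolicity of $X$ together with the A/QI data to deduce that $g_n x_0$ lies within bounded $X$-distance of $Y$, so that we can produce $k_n\in H$ for which $k_n^{-1}g_n$ moves $x_0$ by at most some fixed $\varepsilon$. Then I would invoke acylindricity along $Y$: using a second witness point $y\in Y$ at distance $\ge R(\varepsilon, Y)$ from $x_0$ --- obtained by examining a segment of $\gamma_n$ past $g_n$ or by translating by a fixed element of $H$ --- one arranges that each $k_n^{-1}g_n$ moves both $x_0$ and $y$ by at most $\varepsilon$. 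Definition~\ref{def:acyl} then bounds the number of such elements by $M(\varepsilon, Y)$, which upgrades to a uniform bound on $d_G(g_n,H)$ in terms of $\lambda$ and $c$, contradicting the assumption.

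The main obstacle I anticipate is justifying the first step, namely that $g_n x_0$ lies close to $Y$ in $X$: because $\pi(\gamma_n)$ is only Lipschitz and not a quasi-geodesic, hyperbolicity of $X$ does not directly imply that every $\pi(g_n)$ stays near the orbit. I would handle this with a dichotomy. If $g_n x_0$ is far from $Y$, a contracting/projection argument exploiting coarse Morse-ness of $Y\subset X$ should itself yield a contradiction, while if $g_n x_0$ is close to $Y$ the acylindricity argument above applies. Establishing the requisite quasi-convexity (or contracting property) of $Y$ from the raw A/QI data, without assuming a priori that $H$ is word hyperbolic, is the delicate point and is where Sisto's strategy for hyperbolically embedded subgroups needs careful adaptation to the A/QI setting. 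Finite generation of $G$ enters only to ensure that $\pi$ has a well-defined finite Lipschitz constant $L$, which is what converts $G$-length excursions into quantitative $X$-information usable by the acylindricity count.
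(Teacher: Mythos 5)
Your overall strategy (adapt Sisto's argument: reduce to a Morse/strong quasi-convexity property, use the orbit map, a projection/contraction argument near the orbit, and acylindricity) is the same family of argument the paper uses, but as written there is a genuine gap exactly at the point where acylindricity is supposed to enter. Knowing $d_X(g_nx_0,k_nx_0)\le\varepsilon$ controls the displacement of the single point $x_0$ by $k_n^{-1}g_n$ and nothing more; there is no reason that this element also moves a second point $y\in Y$ with $d_X(x_0,y)\ge R$ by a bounded amount, and neither of your suggested devices produces one. Translating by a fixed $h\in H$ gives $d_X(k_n^{-1}g_nhx_0,hx_0)=d_X(g_nhx_0,k_nhx_0)$, which you have no control over, and ``examining a segment of $\gamma_n$ past $g_n$'' only produces \emph{different} group elements lying near preimages of other balls, not a second small displacement for the same element. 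Since the action need not be proper, Definition~\ref{def:acyl} simply cannot be applied to the elements $k_n^{-1}g_n$ as you propose. In the paper (following Sisto) the element with small displacement at two far-apart orbit points is manufactured by a different mechanism: one first proves that the orbit-map \emph{preimages} of two far-apart balls centered on $Y$ are geometrically separated in $G$ (Lemma~\ref{lem:sisto3.3}): given infinitely many pairs $(a_i,b_i)$ in the two preimages with $d_G(a_i,b_i)\le D$, one passes to a subsequence on which $a_i^{-1}b_i$ is a constant element, and then the elements $a_ia_0^{-1}$ displace \emph{both} ball centers by at most $2r$, which weak acylindricity forbids. This is then upgraded to the word-metric statement that points of $G$ uniformly close to preimages of both balls lie in a bounded neighborhood of $H$ (Lemma~\ref{lem:geomsep}), and finally to the quantitative annulus-avoidance estimate (Proposition~\ref{claim:stable}): a path from $h$ to $h'$ avoiding $N^G_K(\hat\pi^{-1}(A(h,r_1,r_2)))$ has $G$-length at least $L(r_2-r_1)$, whose proof also needs the contraction Lemma~\ref{lem:contracting} (to handle the portions whose image in $X$ is far from the orbit) together with a pigeonhole over sphere-neighborhoods. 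Strong quasi-convexity then follows from a direct length comparison with the quasi-geodesic inequality, not from a compactness contradiction. None of this machinery is recoverable from your sketch, and without it the step ``acylindricity bounds the number of such elements, which upgrades to a bound on $d_G(g_n,H)$'' does not get off the ground (though I note that \emph{if} you had the two-point displacement with a fixed second point, the upgrade itself would be fine, since finiteness forces $k_n^{-1}g_n$ constant along a subsequence).

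Two smaller points. First, you misidentify the delicate step: quasi-convexity of $Y=Hx_0$ in $X$ is immediate from the A/QI hypothesis (a quasi-isometrically embedded orbit in a hyperbolic space is quasi-convex by the Morse lemma), and likewise $H$ is hyperbolic because it quasi-isometrically embeds in $X$; the genuinely delicate point is transferring acylindricity, which is a statement about $X$-displacements of group elements, into control of $G$-distances to $H$. Second, your opening equivalence is not quite right: undistortion plus the Morse property (strong quasi-convexity) does not characterize stability in general ($H=G=\mathbb{Z}^2$ is undistorted and strongly quasi-convex but not stable); the correct statement is Tran's Theorem~\ref{obs:tran}, stability $\Leftrightarrow$ hyperbolic $+$ strongly quasi-convex, and you must invoke hyperbolicity of $H$, which here is free for the reason just given.
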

\emph{Stability} is a strong quasi-convexity condition introduced by Durham--Taylor \cite{DurhamTaylor15} (see Definition \ref{def:stable}).  By work of Antol\'in--Mj--Sisto--Taylor \cite{AMST19}, stability implies finite height as well as the related properties of \emph{finite width} and \emph{bounded packing}.  Our arguments here are based on those of Sisto in \cite{Sisto16}, where he proves a stability result for hyperbolically embedded subgroups.  Note that in our setting, the subgroup $H$ may not  be hyperbolically embedded in $G$.  In particular, hyperbolically embedded subgroups have height at most one, whereas subgroups that are part of an A/QI triple often have height greater than one.

For our investigation of the boundary, we return to the setting of a general A/QI triple $(G,X,H)$.  For such a triple there is an associated action $G\acts \hat X$, where $\hat X$ is a hyperbolic space obtained from $X$ by equivariantly coning off the translates of an $H$--orbit.  (In case $X$ is the Cayley graph of $G$, this is the \emph{coned-off} or \emph{electrified} Cayley graph of $(G,H)$ studied, for example, in \cite{Bowditch12,Farb}.)  We characterize the boundary of the cone-off $\hat X$ as follows.
\begin{restatable}{theorem}{aqiboundary}\label{thm:aqiboundary}
    Suppose $(G,X,H)$ is an A/QI triple and fix $x\in X$.  Let $\mc{H}=\{gHx\mid gH\in G/H\}$.  Let $\hat{X}$ be the cone-off of $X$ with respect to $\mc{H}$.  Then $\partial \hat X$ is homeomorphic to the subspace of $\partial X$ obtained by deleting the limit sets of all the conjugates of $H$.
\end{restatable}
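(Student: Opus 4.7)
The plan is to first verify that $\hat X$ is Gromov hyperbolic by applying Kapovich--Rafi's cone-off theorem to $\mc H$, and then to construct a $G$-equivariant homeomorphism $\Phi \co \partial X \setminus \bigcup_{g\in G}\Lambda(gHg^{-1}) \to \partial \hat X$. The Kapovich--Rafi hypotheses require uniform quasi-convexity of the elements of $\mc H$ together with a coarse bounded-intersection property. Quasi-convexity of each $gHx$ is immediate since all conjugates of $H$ are QI-embedded by the action with the same constants. For bounded intersection, a long coarse overlap between distinct $g_1Hx$ and $g_2Hx$ would produce many elements of $G$ moving some pair of far-apart points of $g_1Hx$ only a bounded amount, contradicting acylindricity of the $G$-action along $H$.

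For the forward map, I would pick, for each $\xi \in \partial X$ outside every conjugate limit set, a geodesic ray $\gamma \co [0,\infty) \to X$ from $x$ to $\xi$. Uniform quasi-convexity plus the hypothesis on $\xi$ give a uniform bound on how long $\gamma$ stays in any neighborhood $N_R(gHx)$, since otherwise $\xi$ would lie in $\Lambda(gHg^{-1})$. Viewed in $\hat X$, $\gamma$ then becomes an unparametrized quasi-geodesic (each cone-point detour shortens length by at most a controlled amount) and converges to a well-defined $\hat\xi \in \partial\hat X$; set $\Phi(\xi) = \hat\xi$. For the inverse, starting from a geodesic ray $\hat\gamma$ in $\hat X$, I would replace each cone-point traversal by a geodesic in $X$ between the corresponding entry and exit points on $gHx$. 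The bounded-penetration estimates underlying the Kapovich--Rafi and Dowdall--Taylor cone-off setup show this lift is an unparametrized quasi-geodesic in $X$, hence converges to some $\xi \in \partial X$; moreover $\hat\gamma$ escapes every bounded region in $\hat X$, so its lift cannot remain in any one $N_R(gHx)$, forcing $\xi$ to avoid all $\Lambda(gHg^{-1})$. Standard fellow-traveler arguments then promote these two maps to mutually inverse continuous maps in the cone topology on boundaries.

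The main obstacle will be the first step: converting the qualitative hypothesis ``acylindrical along $H$'' into the quantitative bounded-intersection / bounded-projection estimates for the family $\mc H$ that both Kapovich--Rafi's cone-off hyperbolicity theorem and the Dowdall--Taylor-style lifting arguments take as input. Once that quantitative translation is in hand, the hyperbolicity of $\hat X$ and the boundary identification are essentially black-box applications of the cone-off machinery, but it is precisely the A/QI structure, rather than mere quasi-convexity of $Hx$, that makes this translation possible and distinguishes this setting from the classical quasi-convex coning of Bowditch.
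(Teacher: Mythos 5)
There is a genuine gap, and it sits in both of the places where you invoke acylindricity. First, the ``coarse bounded-intersection property'' you claim for the family $\mc{H}$ is false in general for A/QI triples, and your derivation of it does not work: acylindricity along $H$ (Definition~\ref{def:newacyl}, Theorem~\ref{thm:newacyl}) bounds only the \emph{number} of distinct translates whose $\varepsilon$--neighborhoods share an overlap of diameter $>D$, not the diameter of the overlap of a single pair. A long overlap between $g_1Hx$ and $g_2Hx$ merely reflects that $g_1Hg_1^{-1}\cap g_2Hg_2^{-1}$ is infinite, which is entirely compatible with the A/QI hypothesis; the paper stresses that such subgroups often have height greater than one. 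Concretely, take $G=F(a,b)$ with its Cayley graph and $H=\langle a,\,bab^{-1}\rangle$: this is an A/QI triple, yet $Hx$ and $bHx$ have unbounded coarse intersection (the points $(bab^{-1})^n$ and $ba^n$ are at distance $1$). Fortunately no such separation is needed for hyperbolicity of $\hat X$: Theorem~\ref{thm:conehyp} requires only uniform quasi-convexity. So the ``main obstacle'' you single out is not actually an obstacle, but your resolution of it is incorrect, and the bounded-penetration estimates you later lean on (for both the forward map and the de-electrified lifts in the inverse map) are precisely the estimates that are unavailable without geometric separation of the family.

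Second, and more seriously, the step that carries the real content of the theorem is missing. After Theorem~\ref{thm:DTthm} identifies $\partial\hat X$ with $\partial_u X$, what must be shown is that every $\xi\in\partial X\setminus\bigcup_{g}\Lambda(gHg^{-1})$ lies in $\partial_u X$, equivalently that a quasi-geodesic ray with \emph{bounded} image in $\hat X$ limits into some $\Lambda(gHg^{-1})$ (the paper's Lemma~\ref{lem:bddimpliesY}). Your argument asserts a uniform bound on how long a ray to such a $\xi$ can stay in $N_R(gHx)$; that does not follow from the hypothesis (for each fixed $g$ the time is finite, but it may grow with $g$), and even granting per-translate finiteness, the ray could fellow-travel infinitely many \emph{distinct} translates for longer and longer stretches and so be bounded in $\hat X$. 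This is exactly the phenomenon of Example~\ref{ex:bs12}, where the inclusion \eqref{eq:include} is strict. Excluding it is where acylindricity must actually be used: one takes bounded-length paths in $\hat X$ to points far along the ray, de-electrifies them via Lemma~\ref{lem:embedded} so that some $\mc{H}$--geodesic pieces become long, observes that a fixed definite-length segment of the ray then lies in the $\varepsilon$--neighborhood of every translate involved, and applies Theorem~\ref{thm:newacyl} to cap the number of such translates by $N$, forcing $[\gamma]$ into the limit set of one of finitely many elements of $\mc{H}$. Your proposal contains no argument of this shape---its only appeal to acylindricity is for the false pairwise-separation claim---so the crucial inclusion, and with it the homeomorphism, is not established.
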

 
  In \cite{Ham16}, Hamenst\"adt proves an analogous result to Theorem~\ref{thm:aqiboundary} in the case that the hyperbolic space $X$ is strongly hyperbolic (as a metric space) relative to a collection of subspaces $\mathcal H$, without requiring the existence of a group action.
  
We use this description of $\partial \hat X$ to understand the geometry of the action $G\acts \hat X$.  In Corollary~\ref{cor:loxell}, we classify the elliptic, loxodromic, and parabolic elements for this action.  In  Proposition~\ref{prop:WPDpersists}, we describe the loxodromic WPD elements of $G\acts \hat X$; in particular we can show (Corollary~\ref{cor:wpd}) if $G\acts X$ is a WPD \emph{action}, then so is $G\acts \hat X$.

\subsection*{Outline} 
In Section~\ref{sec:prelim}, we recall some basic facts and fix some notation.  In Section~\ref{sec:newacyl} we give an alternative characterization of acylindricity along a subspace.  The following three sections can be read independently.
In Section~\ref{sec:finiteheight}, we prove some properties of A/QI triples.  In particular, we give short arguments establishing certain intersection properties of A/QI triples (see Propositions~\ref{prop:intersectedtriples} and~\ref{prop:finiteheight}).  We also prove Theorem~\ref{thm:combination}. 
In Section~\ref{sec:stability}, we recall the definition of stable subgroups and prove Theorem \ref{thm:stable}.
In Section~\ref{sec:coneoff}, we characterize the boundary of the coned-off space $\hat X$ by proving Theorem \ref{thm:aqiboundary}, along with the associated results about the action $G\acts \hat X$.    
Finally, in Section~\ref{sec:examples}, we use the combination theorem from Section~\ref{sec:finiteheight} to describe some further examples of A/QI triples.  Finally we pose several questions in Section~\ref{sec:questions}.

\subsection*{Acknowledgments}  We are extremely indebted to Sam Taylor for his comments on a previous version of this paper, and for telling us how to prove Proposition~\ref{prop:WPDpersists}.  We also thank Anthony Genevois for helpful comments about Artin and Cremona groups.  We also thank the referee for detailed comments which helped improve the exposition.

The second author thanks Daniel Groves, Mahan Mj, Bena Tshishiku, and Jacob Russell for useful conversations.  He was visiting Cambridge University for part of this work and thanks Trinity College and DPMMS for their hospitality.

The first author was partially supported by NSF Award DMS-1803368.  The second author was partially supported by Simons Collaboration Grant \#524176.

\section{Preliminaries}\label{sec:prelim}
In this section we recall some basic notions and fix some notation regarding Gromov products and the boundary of a hyperbolic space.  For more detail see, for example, \cite[III.H]{BH}.

\begin{definition}
Let $(X,d_X)$ be a hyperbolic metric space with basepoint $x_0$.  For any $x,y\in X$, the \emph{Gromov product of $x$ and $y$} is \[(x\mid y)_{x_0}=\frac12\left(d_X(x_0,x)+d_X(x_0,y)-d_X(x,y)\right).\]
\end{definition}
The following observation can be found, for example in Bridson--Haefliger~\cite[p. 410]{BH}.
\begin{lemma}
For any $\delta$--hyperbolic space $X$ and any $x,y\in X$, we have \[|(x\mid y)_{x_0}-d_X(x_0,[x,y])|\leq \delta,\] where $[x,y]$ is any geodesic from $x$ to $y$ and $x_0$ is a fixed basepoint.
\end{lemma}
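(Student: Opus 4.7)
The plan is to prove the two inequalities $(x\mid y)_{x_0}\le d_X(x_0,[x,y])$ and $d_X(x_0,[x,y]) \le (x\mid y)_{x_0}+\delta$ separately, since neither requires any real input beyond the triangle inequality and the thin-triangle property.

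For the first inequality I would pick any point $m\in[x,y]$ and apply the triangle inequality twice:
\[ d_X(x_0,x)+d_X(x_0,y)\le \bigl(d_X(x_0,m)+d_X(m,x)\bigr)+\bigl(d_X(x_0,m)+d_X(m,y)\bigr), \]
and use the fact that $d_X(m,x)+d_X(m,y)=d_X(x,y)$ (because $m$ lies on the geodesic $[x,y]$) to rearrange into $(x\mid y)_{x_0}\le d_X(x_0,m)$. Taking the infimum over $m\in[x,y]$ yields the inequality. This step uses no hyperbolicity at all.

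For the upper bound I would invoke the thin-triangle structure of the geodesic triangle $\Delta=\Delta(x_0,x,y)$. Let $m\in[x,y]$ be the internal (``tripod'') point, namely the point at distance $(x_0\mid y)_x$ from $x$ and $(x_0\mid x)_y$ from $y$, and let $p\in[x_0,x]$ be the point at distance $(x\mid y)_{x_0}$ from $x_0$. Because $(x\mid y)_{x_0}+(x_0\mid y)_x=d_X(x_0,x)$, the point $p$ is also at distance $(x_0\mid y)_x$ from $x$. The thin-triangle property for a $\delta$-hyperbolic space then gives $d_X(m,p)\le\delta$, and consequently
\[ d_X(x_0,[x,y])\le d_X(x_0,m)\le d_X(x_0,p)+d_X(p,m)\le (x\mid y)_{x_0}+\delta. \]

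The only subtlety is matching the notion of ``$\delta$-hyperbolic'' in force with the version of the thin-triangle property used, since the constant one gets in the upper bound will depend (up to a universal multiplicative factor) on which of the standard equivalent definitions one starts from. Since the paper is calibrated to the Bridson-Haefliger conventions of \cite{BH}, the tripod/insize formulation is available immediately and the argument as sketched produces exactly the stated bound $\delta$. I do not expect any real obstacle beyond bookkeeping.
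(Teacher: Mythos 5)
The paper does not prove this lemma at all: it is stated in the preliminaries as standard background (with the reader referred to \cite{BH}), so there is no in-paper argument to compare against. Your two-step proof is the standard one and is correct: the inequality $(x\mid y)_{x_0}\le d_X(x_0,[x,y])$ is pure triangle inequality, and the reverse inequality up to $\delta$ follows exactly as you say by matching the internal point of $[x,y]$ with the point of $[x_0,x]$ at distance $(x\mid y)_{x_0}$ from $x_0$, both of which map to the centre of the comparison tripod. The only caveat is the one you already flag, and it is worth stating slightly more carefully: the primary definition of $\delta$--hyperbolicity in \cite{BH} is the slim-triangles condition, while your second inequality uses the thin-triangle (insize $\le\delta$) condition, which is equivalent only up to a bounded change of constant; so strictly speaking your argument proves the estimate with the insize constant in place of $\delta$. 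Since the paper never specifies its convention and uses this lemma only coarsely, that discrepancy is harmless bookkeeping rather than a gap.
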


A sequence of points $(x_k)$ in $X$ \emph{converges to infinity} if $\lim_{n,k\to\infty}(x_k\mid x_n)_{x_0}=\infty$.  An equivalence relation $\sim$ on sequences $(x_k)$, $(y_k)$ in $X$ which converge to infinity is defined as follows: $(x_k)\sim(y_k)$ if and only if $\lim_{k\to\infty}(x_k\mid y_k)_{x_0}=\infty$ if and only if $\lim_{n,k\to\infty}(x_k\mid y_n)_{x_0}=\infty$.

\begin{definition}
The \emph{Gromov boundary} $\partial X$ of a hyperbolic metric space $X$ is the set of equivalence classes of sequences in $X$ which converge to infinity.
\end{definition}

The Gromov product extends to $\bar X=X\cup \partial X$ by the formula
\begin{equation}\label{eq:gprodinfty}
  (x\mid y)_{x_0}=\sup\{\liminf_{i,j\to\infty} (x_i\mid y_j)_{x_0}\mid x_i\to x, y_i\to y\mbox{, and }x_i,y_i\in X\},
\end{equation}
for all $x,y\in \bar X$.  The union $\bar X$ is topologized by declaring that a sequence $(x_n)$ in $\bar X$ converges to a point $x\in\partial X$ if and only if \[\lim_{n\to\infty}(x_n\mid x)_{x_0}=\infty.\]

Although convergence to infinity, equivalence, and the Gromov boundary are defined using the basepoint $x_0$, they are actually independent of that basepoint.

The boundary can also be thought of in terms of quasi-geodesic rays, which are quasi-isometric embeddings of $[0,\infty)$.  
To simplify notation we will bundle together all the constants describing the quality of such a quasi-isometric embedding (and hence of such a quasi-geodesic) into a single number.

\begin{definition}\label{def:quasi-geodesic}
Let $\tau\geq 1$.  A map $f\from (X,d_X)\to (Y,d_Y)$ of metric spaces is a \emph{$\tau$--quasi-isometric embedding} if for all $x,y\in X$, we have $\tau^{-1}d_X(x,y)-\tau\leq d_Y(f(x),f(y))\leq \tau d_X(x,y)+\tau$.

  A \emph{$\tau$--quasi-geodesic} in $X$ is a $\tau$--quasi-isometric embedding of an interval into $X$, that is, a  map $\gamma\from I\to X$ where $I\subseteq \bR$ is connected and so that for all $s,t\in I$,

     $\tau^{-1}|s-t|-\tau\le d_X(\gamma(s),\gamma(t))\le \tau|s-t|+\tau$.

  \end{definition}
  
 If the map $\gamma\from I\to X$ is continuous, then we denote the length of the quasi-geodesic $\gamma$ in $X$ by $\ell_X(\gamma)$.   Given a discrete path (list of points) $\sigma=x_0,\dots, x_k$ in a metric space $X$, we define its length to be  $\ell^0(\sigma)=\sum_{i=0}^{k-1} d_X(x_i,x_{i+1})$.
 
 It is occasionally convenient to assume that a quasi-geodesic $\gamma$ is \emph{tame} in the sense of  \cite[III.H.1.11]{BH}.  When $\gamma$ is tame,  the map $\gamma\from I\to X$ is continuous, and we additionally have $\ell_X(\gamma|_{[s,t]})\leq \tau\, d_X(\gamma(s),\gamma(t)) + \tau$ for all $s,t\in I$.  Quasi-geodesics of uniform quality are uniformly closely fellow-traveled by tame quasi-geodesics (see \cite[III.H.1.11]{BH}).
 In this paper we will \emph{not} assume  quasi-geodesics are tame unless we explicitly state otherwise. 
   
  The following ``Morse'' property of quasi-geodesics in hyperbolic spaces is well-known (see e.g. \cite[III.H.1.7]{BH}).
  \begin{theorem}\label{thm:stability}
    For all $\delta\ge 0$, $\tau\ge 1$, there is an $M=M(\tau,\delta)\ge 0$ satisfying the following.  Let $X$ be a $\delta$--hyperbolic space, and let $\gamma_1$, $\gamma_2$ be $\tau$--quasi-geodesics with the same endpoints in $X\cup \partial X$.  Then the Hausdorff distance between $\gamma_1$ and $\gamma_2$ is at most $M$.
  \end{theorem}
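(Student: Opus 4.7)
The plan is to reduce to the classical Morse lemma for quasi-geodesics with endpoints in $X$ and then to bootstrap to the case with ideal endpoints by a limiting argument.

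First I would handle the case when both endpoints of $\gamma_1$ and $\gamma_2$ lie in $X$. The standard preliminary move is to replace each quasi-geodesic $\gamma_i \co [a_i,b_i]\to X$ by a tame (in particular, continuous) quasi-geodesic $\gamma_i'$ with the same endpoints whose constants depend only on $\tau$: concretely, linearly interpolate between the images of the integer points of $[a_i,b_i]$, and check that the resulting piecewise geodesic is a $\tau'$-quasi-geodesic for some $\tau' = \tau'(\tau)$ and has length bounded by $\tau'\, d_X(\gamma_i(a_i),\gamma_i(b_i))+\tau'$. Since the Hausdorff distance between $\gamma_i$ and $\gamma_i'$ is bounded in terms of $\tau$, it suffices to prove the statement for $\gamma_1',\gamma_2'$.

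Next I would compare each tame quasi-geodesic $\gamma_i'$ to a geodesic segment $[p,q]$ with the same endpoints. The heart of the argument is the exponential divergence estimate: assume some point on $\gamma_i'$ lies at distance $D$ from $[p,q]$, take a maximal subarc of $\gamma_i'$ whose interior is outside the $D$-neighborhood of $[p,q]$, and use $\delta$-thinness of the quadrilateral formed by this subarc, the two radial segments of length $\leq D$ connecting its endpoints to $[p,q]$, and the intervening sub-geodesic of $[p,q]$. This forces the length of the subarc of $\gamma_i'$ to grow exponentially in $D$, while tameness forces it to grow at most linearly in $D$; comparing gives a bound $D \le M_0(\tau,\delta)$. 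Thus $\gamma_i'$ lies in the $M_0$-neighborhood of $[p,q]$, and an elementary ``nearest point'' argument then shows $[p,q]$ lies in the $M_1(\tau,\delta)$-neighborhood of $\gamma_i'$. Applying this to both $i=1,2$ and combining by the triangle inequality gives the desired constant $M = M(\tau,\delta)$ in the case of endpoints in $X$.

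For the case where one or both endpoints lie on $\partial X$, I would approximate. Given $\gamma_i \co [0,\infty)\to X$ with $\gamma_i(0) = p \in X$ and $\gamma_i(\infty) = \xi \in \partial X$, pick sequences $t_n \to \infty$ and let $\gamma_i^{(n)} = \gamma_i|_{[0,t_n]}$; these are $\tau$-quasi-geodesics with endpoints in $X$, so the finite-endpoint case gives $d_{\mathrm{Haus}}(\gamma_1^{(n)},\gamma_2^{(n)}) \le M(\tau,\delta)$ uniformly in $n$. Taking $n\to\infty$, any point $x\in \gamma_1$ lies on some $\gamma_1^{(n)}$ and is therefore within $M$ of $\gamma_2^{(n)}\subset \gamma_2$, and symmetrically; this gives the bound on $\gamma_1,\gamma_2$. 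The case of two ideal endpoints is handled identically by truncating on both sides. The main obstacle is the exponential divergence step, which is the only place one uses $\delta$-hyperbolicity in an essential quantitative way; the boundary extension is then just bookkeeping.
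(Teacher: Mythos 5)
Your treatment of the finite-endpoint case is the standard proof (tame the quasi-geodesics, run the exponential-divergence versus linear-length comparison against a geodesic with the same endpoints, then recover the other containment by a connectedness/nearest-point argument), and that part is complete; note the paper itself gives no proof of this theorem, citing it as classical, so the only question is whether your argument closes the ideal-endpoint case.

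It does not, and the gap is concrete: the truncations $\gamma_1^{(n)}=\gamma_1|_{[0,t_n]}$ and $\gamma_2^{(n)}=\gamma_2|_{[0,t_n]}$ do \emph{not} have the same endpoints. They share the initial point $p$, but their terminal points $\gamma_1(t_n)$ and $\gamma_2(t_n)$ are a priori unrelated, so the finite-endpoint Morse lemma cannot be applied to the pair, and the claimed uniform bound $d_{\mathrm{Haus}}(\gamma_1^{(n)},\gamma_2^{(n)})\le M$ is unjustified. In fact your truncation step never uses the hypothesis that the two rays represent the same point $\xi\in\partial X$; the identical argument would ``prove'' the statement for two rays converging to distinct boundary points, where the conclusion is false. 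The boundary case is therefore not bookkeeping: you must use the hypothesis $[\gamma_1]=[\gamma_2]$ quantitatively, e.g.\ via Gromov products. One standard repair: since both rays converge to $\xi$, $(\gamma_1(T)\mid\gamma_2(S))_{p}\to\infty$ as $T,S\to\infty$. Given $x=\gamma_1(t)$, choose $T,S$ with $(\gamma_1(T)\mid\gamma_2(S))_{p}\ge d_X(p,x)+M_0+4\delta$, where $M_0$ is the finite-endpoint Morse constant; compare $\gamma_1|_{[0,T]}$ and $\gamma_2|_{[0,S]}$ to geodesics $[p,\gamma_1(T)]$ and $[p,\gamma_2(S)]$ by the finite case, and use the fact that the initial segment of $[p,\gamma_1(T)]$ of length $(\gamma_1(T)\mid\gamma_2(S))_{p}$ lies within a uniform multiple of $\delta$ of $[p,\gamma_2(S)]$ (thinness of the triangle with vertices $p,\gamma_1(T),\gamma_2(S)$). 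This puts $x$ within a constant depending only on $\tau,\delta$ of $\gamma_2$; symmetry handles the other containment, and the bi-infinite case follows by running the same argument toward each end after matching up basepoints. Alternatively, compare each ray to a fixed $\tau_0$--quasi-geodesic ray to $\xi$ as in Remark~\ref{rem:visibleboundary} and apply the ray case twice.
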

  \begin{definition}
    Any number $M$ as in Theorem~\ref{thm:stability} will be called a \emph{Morse constant for the parameters $\tau,\delta$.}
  \end{definition}
  
  The following lemma follows  from the $2\delta$--slimness of geodesic quadrilaterals in a $\delta$--hyperbolic space.
  \begin{lemma}\label{lem:unifslimquads}
    Let $X$ be a $\delta$--hyperbolic space, let $a,b,a',b'$ be points of $X$, let $\gamma$ be a geodesic from $a$ to $b$, and let $\gamma'$ be a geodesic from $a'$ to $b'$.  Let $\varepsilon\geq \max\{d_X(a,a')),d_X(b,b')\}$ and suppose that $d_X(a,b) > 2\varepsilon + 2\delta$. Fix points $x,y$ on $\gamma$ that are distance exactly $\varepsilon$ from $a$ and $b$, respectively.  If $x',y'$ are the closest points on $\gamma'$ to $x$ and $y$, respectively, then the Hausdorff distance between the subsegment of $\gamma$ between $x$ and $y$, and the subsegment of $\gamma'$ between $x'$ and $y'$ is at most $6\delta$.
  \end{lemma}
  
  \begin{proof}
    We choose geodesics $[a,a']$ and $[b,b']$.
    Our choice of points $x,y$ on $\gamma$ ensures that $d_X(x,x')\leq 4\delta$ and $d_X(y,y')\leq 4\delta$.  To see this, notice that there is a point $z$ on $[a,a']\cup \gamma'\cup [b,b']$ at distance at most $2\delta$ from $x$.  If $z$ lies on $\gamma'$, then we are done.
    The lower bound on $d_X(a,b)$ implies that $z$ lies on $[a,a']$.  We have $d_X(z,a')=d_X(a,a')-d_X(z,a) \leq \varepsilon  - (\varepsilon-2\delta) =2\delta.$   In this case $d_X(a',x)\leq 4\delta$.  An analogous argument shows this holds for $y$. The result then follows from the $2\delta$--slimness of the geodesic quadrilateral with vertices $x,y,x',$ and $y'$.
  \end{proof}
  
  \begin{definition}
Let $\kappa\geq 0$.  A subset $Y$ of a metric space $X$ is \emph{$\kappa$--quasi-convex} if every geodesic in $X$ connecting points of $Y$ is contained in the $\kappa$--neighborhood of $Y$.
  \end{definition}

Given a quasi-geodesic ray $\gamma$ in $X$ starting at $x_0$, we obtain an equivalence class $[\gamma]\in\partial X$ by $[\gamma]:=[(a_i)]$ for any sequence of points $a_i$ on $\gamma$ with $\lim_{i\to\infty}d_X(x_0,a_i)=\infty$.  
\begin{remark}\label{rem:visibleboundary}By \cite[Remark~2.16]{KB}, there is a constant $\tau_0$ depending only on the hyperbolicity constant of $X$ such that given any point $\xi\in\partial X$, there is a $\tau_0$--quasi-geodesic ray $\gamma_\xi$ in X starting at $x_0$ such that $[\gamma_\xi]=\xi$.
\end{remark}

The Gromov product at infinity controls the Gromov product between points on quasi-geodesics, in the following sense.
\begin{lemma}\label{lem:gprodcontrol}
  For any $\tau\ge 1$, $\delta\ge 0$ there is some $E \ge 0$ so that the following holds.

  Let $X$ be $\delta$--hyperbolic, $x_0\in X$, and $\alpha,\beta\in \partial X$.  Let $\gamma_\alpha$ and $\gamma_\beta$ be $\tau$--quasi-geodesics starting at $x_0$ and tending to $\alpha$, $\beta$ respectively.  Then for all $s,t \ge 0$,
  \[ (\gamma_\alpha(s)\mid\gamma_\beta(t))_{x_0}\le (\alpha\mid\beta)_{x_0} + E .\]
\end{lemma}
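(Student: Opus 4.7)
The plan is to combine the four-point hyperbolic inequality with Morse stability (Theorem~\ref{thm:stability}), exploiting the fact that the extension of the Gromov product is a \emph{supremum} of $\liminf$s, so an upper bound on $(\gamma_\alpha(s)\mid\gamma_\beta(t))_{x_0}$ only requires exhibiting one good pair of approximating sequences for $\alpha$ and $\beta$.

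Write $p=\gamma_\alpha(s)$ and $q=\gamma_\beta(t)$, and approximate $\alpha,\beta$ by $a_n=\gamma_\alpha(n)$ and $b_m=\gamma_\beta(m)$ for $n,m\to\infty$. Iterating the standard four-point inequality twice gives
$$
(a_n \mid b_m)_{x_0} \ge \min\{(a_n\mid p)_{x_0},\; (p\mid q)_{x_0},\; (q\mid b_m)_{x_0}\} - 2\delta.
$$
The technical heart of the proof is to show that the two side Gromov products eventually dominate $(p\mid q)_{x_0}$. For this I would apply Morse stability to the quasi-geodesic $\gamma_\alpha|_{[0,n]}$ and a geodesic $[x_0,a_n]$, producing a point $p'\in[x_0,a_n]$ with $d(p,p')\le M:=M(\tau,\delta)$. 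Setting $r=d(x_0,p')$, the triangle inequality gives $d(x_0,p)\ge r-M$ and $d(p,a_n)\le M + d(x_0,a_n) - r$, from which a direct computation yields $(a_n\mid p)_{x_0}\ge d(x_0,p)-2M$; symmetrically, $(q\mid b_m)_{x_0}\ge d(x_0,q)-2M$. Since always $(p\mid q)_{x_0}\le \min\{d(x_0,p),d(x_0,q)\}$, the minimum above is at least $(p\mid q)_{x_0}-2M$.

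Passing to the $\liminf$ as $n,m\to\infty$ in $(a_n\mid b_m)_{x_0}\ge (p\mid q)_{x_0}-2M-2\delta$ and applying the definition of the extended Gromov product (which gives $\liminf_{n,m}(a_n\mid b_m)_{x_0}\le (\alpha\mid\beta)_{x_0}$, since the defining supremum is at least any particular $\liminf$) produces the lemma with $E=2M+2\delta$. The only delicate point is the Morse-based estimate $(a_n\mid p)_{x_0}\ge d(x_0,p)-2M$; the rest is bookkeeping and direction-of-inequality tracking, together with a quick check that the uniformity in $n,m$ allows the single-index and double-index $\liminf$ to be compared.
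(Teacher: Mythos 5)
Your argument is correct. Note that the paper states Lemma~\ref{lem:gprodcontrol} without proof (it is treated as a standard fact), so there is no argument in the text to compare against; what you give is exactly the expected standard proof: the four-point inequality applied twice, the Morse lemma to show $(\gamma_\alpha(n)\mid \gamma_\alpha(s))_{x_0}\ge d(x_0,\gamma_\alpha(s))-2M$ (your computation in fact yields the slightly better bound $d(x_0,\gamma_\alpha(s))-M$), the elementary bound $(p\mid q)_{x_0}\le\min\{d(x_0,p),d(x_0,q)\}$, and the observation that the defining supremum dominates the particular $\liminf$ coming from $a_n=\gamma_\alpha(n)$, $b_m=\gamma_\beta(m)$. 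The only bookkeeping caveat is that if $\delta$ denotes the slim-triangles constant, the four-point inequality holds with a comparable but possibly larger constant, which only changes the value of $E$; also, since your estimate holds uniformly for all $n\ge s$, $m\ge t$, the double-index $\liminf$ comparison you flag as delicate is immediate.
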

\begin{proof}
  We fix some $s,t\ge 0$.
  From the definition of Gromov product at infinity~\eqref{eq:gprodinfty}, we see
  \begin{equation*}
    (\alpha\mid \beta)_{x_0} \ge \liminf_{i,j\to\infty}(\gamma_\alpha(i)\mid \gamma_\beta(j))_{x_0}.
  \end{equation*}
  Choosing $S\ge s$, $T\ge t$ sufficiently large, we can assume
  \begin{align}\label{eq:gprodineq}
    (\alpha\mid \beta)_{x_0} & \ge (\gamma_\alpha(S)\mid \gamma_\beta(T))_{x_0} - 1\nonumber \\
                             & = \frac{1}{2}\left(d_X(x_0,\gamma_\alpha(S)) + d_X(x_0,\gamma_\beta(T))-d_X(\gamma_\alpha(S),\gamma_\beta(T)\right) - 1
  \end{align}
  Let $M$ be a Morse constant for parameters $\tau$ and $\delta$.
  By the Morse property of quasi-geodesics we have $d_X(x_0,\gamma_\alpha(S))\ge d_X(x_0,\gamma_\alpha(s)) + d_X(\gamma_\alpha(s),\gamma_\alpha(S)) - 2M$  and  $d_X(x_0,\gamma_\beta(T))\ge d_X(x_0,\gamma_\beta(t)) + d_X(\gamma_\beta(t),\gamma_\beta(T)) - 2M$.  By the triangle inequality,
  \begin{equation*}
    d_X(\gamma_\alpha(S),\gamma_\beta(T)) \le d_X(\gamma_\alpha(S),\gamma_\alpha(s)) + d_X(\gamma_\beta(T),\gamma_\beta(t)) + d_X(\gamma_\alpha(s),\gamma_\beta(t)).
  \end{equation*}
  These three inequalities together with~\eqref{eq:gprodineq} yield
  \[(\alpha\mid \beta)_{x_0} \ge (\gamma_\alpha(s)\mid \gamma_\beta(t))_{x_0} - (2M + 1),\] establishing the lemma.
\end{proof}

Given a group acting by isometries on a $\delta$--hyperbolic space, we  define certain subsets of the boundary which arise as limits of sequences of elements from an orbit of a fixed base point.  
\begin{definition}\label{def:limitset}
  For $Y$ a subset of a $\delta$--hyperbolic space $X$, we write $\Lambda(Y)$ for the \emph{limit set} of $Y$.  This is
  \[
\Lambda(Y): = \{p\in \partial X \mid p=\lim_{n\to\infty} y_n \textrm{ for some sequence } y_n\in Y\}.
\]
If we are given a group $G$ acting by isometries on $X$, we define the limit set $\Lambda(T)$ of a subset $T$ of $G$ to be $\Lambda(\{tx_0\mid t\in T\})$ where $x_0$ is some base point.  (The limit set does not depend on this base point.)
\end{definition}

\section{An alternative characterization of acylindricity} \label{sec:newacyl}

When a subgroup $H<G$ is quasi-isometrically embedded by an action $G\curvearrowright X$ on a hyperbolic space, there is an alternate characterization of acylindricity of $G\curvearrowright X$ along $H$.  In practice, this alternate characterization is easier to check.

\begin{definition} \label{def:newacyl}
  Let $G$ act by isometries on a hyperbolic metric space $X$, and let $H$ be a subgroup  that  is  quasi-isometrically embedded by the action.  The action of $G$ on $X$ is \emph{acylindrical along $H$} if for some (equivalently any) $H$--cobounded $Y\subset X$ the following condition holds:
  For any $\varepsilon\geq 0$, there exist constants $D=D(\varepsilon, Y)$ and $N=N(\varepsilon,Y)$ such that any collection of distinct cosets $\{g_iH,\dots g_kH\}$ such that
\[
\diam\left(\bigcap_{i=1}^k \mathcal N_\varepsilon(g_iY)\right)>D
\] 
has cardinality at most $N$.
\end{definition}

For a fixed $\varepsilon>0$ and $H$--cobounded $Y\subset X$, we call the constants $D(\varepsilon,Y),N(\varepsilon,Y)$ the \emph{constants of acylindricity} of the action of $G$ on $X$ along $H$.

The following theorem shows that this definition is equivalent to Definition \ref{def:acyl}.

\begin{theorem}\label{thm:newacyl}
  Let $G$ act by isometries on a hyperbolic space $X$, and let $H$ be a subgroup  that is quasi-isometrically embedded by the action.  Let $Y$ be an $H$--cobounded subset of $X$.   Then the action of $G$ is acylindrical along $H$ in the sense of Definition~\ref{def:acyl} if and only if it is acylindrical along $H$ in the sense of Definition~\ref{def:newacyl}.
\end{theorem}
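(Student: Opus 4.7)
The plan is to prove each implication in turn. For the direction from Definition~\ref{def:newacyl} to Definition~\ref{def:acyl}, fix $\varepsilon$ and let $D = D(\varepsilon,Y)$, $N = N(\varepsilon,Y)$ be the constants from the new definition. Suppose $x,y \in Y$ satisfy $d(x,y) > D$, and consider any $g \in G$ with $d(gx,x), d(gy,y) \le \varepsilon$. Then $x$ is within $\varepsilon$ of $gx \in gY$, so $x \in \mathcal{N}_\varepsilon(gY)$; similarly $y \in \mathcal{N}_\varepsilon(gY)$, and trivially $x,y \in \mathcal{N}_\varepsilon(Y)$. Thus the distinct cosets appearing in $\{H\} \cup \{gH : d(gx,x), d(gy,y) \le \varepsilon\}$ have $\{x,y\}$ in the common intersection of their $\varepsilon$-neighborhoods, whose diameter exceeds $D$. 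By Definition~\ref{def:newacyl} this collection has at most $N$ members, and since $H$ acts properly on $Y$ (being quasi-isometrically embedded), each coset contributes only boundedly many elements to the stabilizer set in question; this yields the desired bound $M$.

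For the converse, suppose Definition~\ref{def:acyl} holds. Given $\varepsilon$, let $C$ be an $H$-coboundedness constant for $Y$, so every point of $Y$ lies within $C$ of a fixed basepoint orbit $Hx_0$. Choose an auxiliary displacement $\varepsilon'$ linear in $\varepsilon$ and $C$, apply Definition~\ref{def:acyl} with $\varepsilon'$ to obtain $R'$ and $M'$, and set $D$ so that $D > R' + O(\varepsilon + C)$. Given $k$ distinct cosets $g_1H, \dots, g_kH$ with $\diam\bigl(\bigcap_i \mathcal{N}_\varepsilon(g_iY)\bigr) > D$, translate by $g_1^{-1}$ so that $g_1 = e$, pick $p,q$ in the intersection with $d(p,q) > D$, and choose $x,y \in Y$ within $\varepsilon$ of $p$ and $q$ respectively. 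For each $i$, use $H$-coboundedness to find $h_i \in H$ such that $\alpha_i := g_ih_i$ satisfies $d(\alpha_ix_0, x) \le C + 2\varepsilon$. Set $y_i^* := \alpha_i^{-1}\bar y_i \in Y$, where $\bar y_i \in g_iY$ approximates $y$ within $2\varepsilon$; then $\alpha_iy_i^* = \bar y_i$ satisfies $d(\alpha_iy_i^*, y) \le 2\varepsilon$, and $|d(x_0, y_i^*) - d(x,y)|$ is bounded by $O(\varepsilon + C)$.

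The crucial step is then a pigeonhole on the $y_i^*$. Because $H$ is quasi-isometrically embedded into $X$, the preimages in $H$ of points at bounded distance from $x_0$ form a finite set, whose size $N_0 = N_0(D)$ depends only on $D$; combined with the $C$-approximation of $Y$ by $Hx_0$, this bounds the number of coarse positions available to the $y_i^*$ by $N_0$. At least $k/N_0$ indices therefore share a common coarse target $y_* \in Y$; for this subcollection $i_1,\dots,i_m$, the elements $\beta_j := \alpha_{i_1}^{-1}\alpha_{i_j}$ lie in the pairwise distinct cosets $\alpha_{i_1}^{-1}g_{i_j}H$ and hence are themselves pairwise distinct, and a straightforward triangle-inequality calculation shows $d(\beta_jx_0, x_0), d(\beta_jy_*, y_*) \le \varepsilon'$. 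Since $d(x_0, y_*) > R'$ by the choice of $D$, Definition~\ref{def:acyl} bounds $m$ by $M'$, giving $k \le N_0\cdot M'$ as required. The main obstacle is exactly this pigeonhole: Definition~\ref{def:acyl} controls stabilizers of a fixed pair in $Y^2$, whereas Definition~\ref{def:newacyl} controls cosets whose translates of $Y$ coarsely fellow-travel, and bridging the two requires first extracting witness elements $\alpha_i$ via $H$-coboundedness and then using the quasi-isometric embedding of $H$ to confine the ``other endpoint'' targets $y_i^*$ to a finite set.
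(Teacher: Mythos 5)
Your backward direction (Definition~\ref{def:newacyl} $\Rightarrow$ Definition~\ref{def:acyl}) is correct and is essentially the paper's argument: the new definition caps the number of distinct cosets represented in the set $\{g : d(gx,x),d(gy,y)\le\varepsilon\}$, and properness of the orbit map on $H$ (uniform over $Y$ via the coboundedness constant) caps the number of elements per coset, giving $M = N\cdot P$.

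The forward direction, however, has a genuine gap at exactly the step you flag as crucial. You place the witnesses $y_i^*$ in $Y$ with $d(x_0,y_i^*)$ within $O(\varepsilon+C)$ of $d(x,y)$, and then claim a pigeonhole bound $N_0=N_0(D)$ on their ``coarse positions,'' citing properness of the quasi-isometric embedding. But properness only bounds the number of orbit points in a ball or annulus of a \emph{given} radius, and here the radius is $d(x,y)$, which is bounded below by $D$ but not above: the intersection of the $\varepsilon$--neighborhoods can have enormous diameter, and the annulus $\{z\in Y : |d(x_0,z)-d(x,y)|\le O(\varepsilon+C)\}$ can contain arbitrarily many orbit points (think of $H$ free of rank $2$, where spheres grow exponentially). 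Nor is there any a priori reason the $y_i^*$ cluster: the isometries $\alpha_i^{-1}$ agree coarsely at $x$, but two isometries agreeing at one point can differ wildly at a far-away point --- controlling exactly this is what acylindricity is for, so assuming the $y_i^*$ occupy boundedly many positions is close to assuming the conclusion. Symptomatically, your forward argument never uses hyperbolicity of $X$ or quasi-convexity of $Y$, whereas the paper's proof needs both precisely here: it chooses a point $a$ on $[x,kx]$ at the \emph{fixed} distance $R+2(\delta+\lambda+\varepsilon)$ from $x$, uses slimness of quadrilaterals to find points $a_i$ within $2\delta$ of $a$ on geodesics with endpoints near $g_iHx$, and uses $\lambda$--quasi-convexity to replace these by orbit points $g_ih_ix$ with $d(x,h_ix)$ confined to the bounded interval $[R+\lambda,\,R+4\varepsilon+4\delta+3\lambda]$. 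The pigeonhole is then over the at most $B$ elements of $H$ in that fixed annulus (a bound depending only on $\varepsilon,\delta,\lambda,R$), after which Definition~\ref{def:acyl} applied to the pair $(x,h_ix)$ bounds the number of cosets per pigeonhole class by $M$. Replacing your far endpoint $y$ by such a bounded-scale midpoint is the missing idea; without it the proposed pigeonhole fails.
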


\begin{proof}
Let $x\in X$, and fix $Y = Hx$.

We prove the backward direction first and assume that the action of $G$ on $X$ is acylindrical along $H$ in the sense of Definition \ref{def:newacyl}. Fix $\varepsilon\geq 0$, and let $D=D(2\varepsilon,Y)$ and $N=N(2\varepsilon,Y)$.  Since $H$ is quasi-isometrically embedded by the action, there exists a constant $P>0$ such that there are at most $P$ elements $h\in H$ such that $d_X(x,hx)\leq 2\varepsilon$.

Fix $h_1,h_2\in H$ satisfying $d_X(h_1x,h_2x)>D$, and let 
\[
\Delta=\{g\in G\mid d_X(h_1x,gh_1x)\leq \varepsilon \textrm{ and } d_X(h_2x,gh_2x)\leq \varepsilon\}.
\]
We claim that $\#\Delta\leq NP$.  Towards a contradiction, assume this is not the case, so that there exist distinct  $\{g_1,\dots,g_k\}\subset \Delta$ with $k>NP$.  Then
\[
\diam\left(\bigcap_{i=1}^k \mathcal N_\varepsilon(g_iHx)\right)>D.
\]
By assumption, the number of \emph{distinct cosets} in $\{g_1H,\ldots,g_kH\}$ is at most $N$.  Rearranging the list, we can therefore assume that the first $(P+1)$ cosets are the same.  In other words $g_1^{-1}g_j\in H$ for all $j\le P+1$.
Since the $g_j$ are all in $\Delta$ we have $d_X(h_1x,g_jh_1x)\leq \varepsilon$ for each $j$.  Let $\hat{g}_j = h_1^{-1}g_1^{-1}g_jh_1$, and note that $\hat{g}_j\in H$ for $j\le P+1$.  Moreover
\begin{equation}\label{eq:hatbound}
d_X(x,\hat{g}_j x)=d_X(g_1h_1x,g_jh_1x)\leq 2\varepsilon.
\end{equation}
There are only $P$ elements of $H$ satisfying~\eqref{eq:hatbound}, so we have  $\hat{g}_j = \hat{g}_i$ for some $i<j\le P+1$.  But this implies $g_j = g_i$, which is a contradiction.
We have therefore shown the action is acylindrical along $Y$ in the sense of Definition~\ref{def:acyl}, with $R = D$ and $M = N P$.

We now prove the forward direction and assume the action is acylindrical along $H$ in the sense of Definition \ref{def:acyl}.  Let $\varepsilon>0$, let $\delta$ be a hyperbolicity constant for $X$, and suppose $H$ is $\tau$--quasi-isometrically embedded by the orbit map.   The Morse property of quasi-geodesics in hyperbolic spaces implies that there is a constant $\lambda$ depending only on $\tau$ and $\delta'$ such that the orbit $Hx$ is $\lambda$--quasi-convex in $X$.  
Let $R = R(\varepsilon')$, $M = M(\varepsilon')$, be the constants provided by Definition \ref{def:acyl}, where
\begin{equation*}
  \varepsilon' = 4(\varepsilon + \delta + \lambda).
\end{equation*}
We will show that Definition \ref{def:newacyl} holds with constants 
\begin{equation*}
  D = D(\varepsilon,Y) = R + \varepsilon'+ \varepsilon,\quad N = N(\varepsilon,Y) =  MB + 1,
\end{equation*}
where $B$ is the number of elements $h$ of $H$ with $d_X(hx,x)\le R + \varepsilon'$.

Suppose that $g_0 H,\ldots ,g_n H$ are distinct cosets with
\begin{equation*}
  \diam\left(\bigcap_{i=0}^n \mathcal N_\varepsilon(g_i H x)\right) > D.
\end{equation*}
We may translate this intersection so that it contains points close to $x$ and to some $kx\in Hx$ which is distance at least $D-\varepsilon$ from the identity.  More specifically, we can assume the following, possibly after changing the coset representatives $g_i$:
\begin{enumerate}
\item $g_0 = 1$;
\item $d_X(g_ix,x)\le 2\varepsilon$ for each $i$; and
\item there is some $k\in H$ so that $d_X(x,kx)> D - \varepsilon$ and so that for each $i$ there is some $k_i \in H$ so that $d_X(g_ik_i x,k x)\le 2\varepsilon$.
\end{enumerate}

Let $a$ lie on a geodesic from $x$ to $k x$ so that $d_X(x,a) = R + 2(\delta + \lambda + \varepsilon)$.  By quasi-convexity of $Hx$, there is some $h_0\in H$ with $d_X(h_0 x,a)\le \lambda$.

Let $i\le n$.  Since $d_X(a,\{x,kx\})\geq \varepsilon'/2>2\varepsilon+2\delta$,  the $2\delta$--slimness of quadrilaterals implies that there is some $a_i$ on a geodesic from $g_i x$ to $g_i k_i x$ so that $d_X(a,a_i)\le 2\delta$.  Again using quasi-convexity, there is some $h_i\in H$ with $d_X(g_ih_ix,a_i)\le \lambda$.  For each $i$, we have
\begin{equation*}
  d_X(h_0x,g_ih_ix)\le 2\lambda + 2\delta.
\end{equation*}
Using the triangle inequality, for each $i$ we also have
\begin{align*}
d_X(x,h_i x) & = d_X(g_ix, g_i h_i x)  \\
&\geq d_X(x, a) -(d_X(x, g_i x)+d_X(a, a_i) + d_X(a_i, g_i h_i x) ) \\
& \geq R+2(\delta+\lambda+\varepsilon) -(2\varepsilon + 2\delta + \lambda) \\
& = R + \lambda,
\end{align*}
 and 
 \begin{align*}
 d_X(g_ix, g_i h_ix)& \leq d_X(x, a) + d_X(x, g_i x)+d_X(a, a_i) + d_X(a_i, g_i h_i x) \\
 &\leq R+2(\delta+\lambda+\varepsilon) +2\varepsilon + 2\delta + \lambda \\
 & =R + 4\varepsilon + 4\delta + 3\lambda.
 \end{align*}
 Putting these together, we obtain
\begin{equation}\label{eq:distinterval}
  d_X(x,h_i x) \in [ R + \lambda, R + 4\varepsilon + 4\delta + 3\lambda ].
\end{equation}
There are at most $B$ elements of $H$ satisfying \eqref{eq:distinterval}, so it remains to fix some such element $h_i$ and bound the number of $g_j$ associated to it.  But for any two such $g_j,g_k$, we have $d_X(g_jx,g_k x)\le \varepsilon'$ and $d_X(g_jh_ix,g_kh_ix)\le \varepsilon'$.  Acylindricity tells us there are at most $M = M(\varepsilon')$ such elements, so over all possible $h$, we have at most $MB$ such.  Thus the action is acylindrical along $H$ in the sense of Definition~\ref{def:newacyl}.  
\end{proof}

\begin{definition}\label{def:many subgroups}
  Let $\mathcal H$ be a collection of subgroups of $G$, and suppose $G\curvearrowright X$ is an action on a hyperbolic space so that each $H\in\mathcal H$ is quasi-isometrically embedded by the action.  Suppose there is a uniformly quasi-convex collection of subspaces $\{Y_H\mid H\in \mc{H}\}$ so that $H$ acts coboundedly on $Y_H$ for each $H\in \mc{H}$.  Then the action is \emph{acylindrical along $\mathcal H$} if for all $\varepsilon\geq 0$ there exist constants $D=D(\varepsilon, \{Y_H\}_{H\in\mathcal H})$ and $N=N(\varepsilon,\{Y_H\}_{H\in\mathcal H})$ such that any collection $\mathcal S\subseteq \{gY_H \mid H\in\mathcal H, g\in G\}$ satisfying
\[
\diam\left(\bigcap_{C\in \mathcal S} \mathcal N_\varepsilon(C)\right)>D
\] 
  has cardinality at most $N$.
\end{definition}

Using this characterization of acylindricity along a collection of quasi-isometrically embedded subgroups, we can generalize the notion of an A/QI triple to allow a collection of subgroups.

\begin{definition}
An \emph{A/QI triple} $(G,X,\mathcal H)$ consists of a Gromov hyperbolic space $X$ which is acylindrical along the collection $\mathcal H$ of infinite subgroups of $G$ in the sense of Definition~\ref{def:many subgroups}.
\end{definition}

The following proposition shows that the A/QI condition for \emph{finite} collections of subgroups reduces to the A/QI condition for a single subgroup.
\begin{proposition}\label{prop:finitecollection}
Let $\mathcal H=\{H_1,\dots,H_n\}$ be a finite collection of infinite subgroups of $G$. Then $(G,X,\mathcal H)$ is an A/QI triple if and only if $(G,X,H_i)$ is an A/QI triple for all $1\leq i\leq n$.  
\end{proposition}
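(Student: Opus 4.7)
My plan is to prove both directions of the biconditional by unpacking Definition~\ref{def:many subgroups} directly. First I should address a subtlety: for a literally finite $\mathcal{H}=\{H_1,\dots,H_n\}$, the condition of Definition~\ref{def:many subgroups} as stated would be vacuous (take $D=0$ and $N=n$). To match the parallel with the single-subgroup case in Definition~\ref{def:newacyl}, where ``distinct cosets'' play the role of ``translates of $Y$'', I will interpret $(G,X,\mathcal{H})$ as shorthand for the triple in which $\mathcal H$ is closed under $G$-conjugation and the associated family of subspaces is $\{gY_{H_i}\mid g\in G,\,1\leq i\leq n\}$. With this setup fixed, both directions are essentially formal.

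For the forward direction I would fix an index $i$ and observe that the collection of translates $\{gY_{H_i}\mid gH_i\in G/H_i\}$ sits inside the larger family $\{Y_H\mid H\in\mathcal H\}$. Any sub-collection $\mathcal S$ of the small family is also a sub-collection of the large family, so the constants $D(\varepsilon),N(\varepsilon)$ provided by the A/QI hypothesis on $\mathcal{H}$ serve as acylindricity constants for $H_i$ in the sense of Definition~\ref{def:newacyl}. Combined with the requirements (infinite, finitely generated, and quasi-isometrically embedded) packaged into the collection A/QI definition, this yields that $(G,X,H_i)$ is an A/QI triple.

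For the backward direction I would take constants $D_i(\varepsilon),N_i(\varepsilon)$ and subspaces $Y_{H_i}$ with quasi-convexity constants $\kappa_i$ witnessing that each $(G,X,H_i)$ is A/QI, and then assemble constants for the whole family by setting $\kappa=\max_i\kappa_i$ (for uniform quasi-convexity of all translates $gY_{H_i}$, using that $G$ acts by isometries), $D(\varepsilon)=\max_i D_i(\varepsilon)$, and $N(\varepsilon)=\sum_{i=1}^n N_i(\varepsilon)$. Given any sub-collection $\mathcal S\subseteq\{gY_{H_i}\}$ with $\diam\bigl(\bigcap_{C\in\mathcal S}\mathcal{N}_\varepsilon(C)\bigr)>D$, I would partition $\mathcal S=\mathcal S_1\sqcup\cdots\sqcup\mathcal S_n$ according to which $H_i$ each element is a translate of. Since intersecting over fewer sets only enlarges the result, $\diam\bigl(\bigcap_{C\in\mathcal S_i}\mathcal{N}_\varepsilon(C)\bigr)\geq\diam\bigl(\bigcap_{C\in\mathcal S}\mathcal{N}_\varepsilon(C)\bigr)>D\geq D_i$ for each $i$. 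The single-subgroup A/QI hypothesis for $H_i$ then gives $|\mathcal S_i|\leq N_i$, and summing yields $|\mathcal S|\leq N$.

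Because the argument reduces to a straightforward partition-and-count, I do not expect any real obstacle; the only thing to get right is the conjugation-closure interpretation of Definition~\ref{def:many subgroups}, without which the finite case would collapse to a tautology and Proposition~\ref{prop:finitecollection} would lose content in one direction.
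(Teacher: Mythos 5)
Your proof is correct and is essentially the paper's: the paper disposes of this proposition by declaring it immediate from the definitions and the finiteness of $\mathcal H$, and your two directions (restricting the family of translates for one implication; taking $\kappa=\max_i\kappa_i$, $D=\max_i D_i$, $N=\sum_i N_i$ and partitioning the collection by type for the other) are exactly the spelled-out version of that. Your preliminary point about reading Definition~\ref{def:many subgroups} in terms of the $G$--translates $gY_{H_i}$ (equivalently, cosets in $\bigsqcup_i G/H_i$) rather than the literal finite set $\{Y_{H_1},\dots,Y_{H_n}\}$ is the intended reading --- it is how the definition is invoked in the proof of Theorem~\ref{thm:combination} --- and, as you note, it is needed for the forward direction to have any content.
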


\begin{proof}
This follows immediately from the definition of an A/QI triple and the fact that $\mc H$ is a finite collection of subgroups.
\end{proof}

\section{Intersections, combinations, and finite height}\label{sec:finiteheight}
In this section, we establish basic properties of A/QI triples.   We \emph{do not} assume in this section that $G$ is finitely generated.

\subsection{Intersections}
The first result of this section implies, in particular, that if $(G, X, A)$ and $(G, X, B)$ are A/QI triples for the same action $G\acts X$, then $(G, X, A \cap B)$ is an A/QI triple.

\begin{proposition}\label{prop:intersectedtriples}
Suppose $(G, X, A)$ is an A/QI triple and $B<G$ is an infinite subgroup quasi-isometrically embedded by the action.  Then $(G,X,A\cap B)$ is an A/QI triple.
\end{proposition}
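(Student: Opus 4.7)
The plan is to verify the three defining conditions for $(G,X,H)$ with $H := A \cap B$ to be an A/QI triple. I assume throughout that $H$ is infinite (otherwise the statement is vacuous as formally written). Fix a basepoint $x_0 \in X$ and set $Y_A := A x_0$, $Y_B := B x_0$, $Y_H := H x_0$.

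Acylindricity along $H$ is essentially free from monotonicity of Definition~\ref{def:acyl}. The condition ``whenever $x, y \in Y$ satisfy $d(x,y) \ge R$'' only becomes \emph{easier} as $Y$ shrinks, so acylindricity along $Y_A$ (which we have by hypothesis) implies acylindricity along the subset $Y_H \subseteq Y_A$ with the same constants. By the conventions of the paper this is acylindricity along $H$.

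For the finite generation and QI embedding of $H$, the strategy is to prove the following and then run a standard geodesic-tracking argument.

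\medskip\noindent\emph{Key Lemma.} There is a constant $C_0$ such that $\mathcal{N}_\kappa(Y_A) \cap \mathcal{N}_\kappa(Y_B) \subseteq \mathcal{N}_{C_0}(Y_H)$, where $\kappa$ is a common quasi-convexity constant for $Y_A$ and $Y_B$ (both quasi-convex as QI-embedded orbits in the hyperbolic space $X$).\medskip

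Granted the lemma, given $h \in H$ a geodesic $\gamma$ in $X$ from $x_0$ to $hx_0$ lies in $\mathcal{N}_\kappa(Y_A) \cap \mathcal{N}_\kappa(Y_B)$ by quasi-convexity, hence in $\mathcal{N}_{C_0}(Y_H)$. Sampling $\gamma$ at unit distances to get $x_0 = p_0, p_1,\dots,p_N = h x_0$ with $N \le d_X(x_0,hx_0) + 1$, and choosing $h_i \in H$ with $d_X(p_i, h_i x_0) \le C_0$ (with $h_0 = 1$, $h_N = h$), the elements $s_i := h_{i-1}^{-1} h_i \in H$ satisfy $d_X(x_0, s_i x_0) \le 2 C_0 + 1$. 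They all lie in the finite set $F := \{h' \in H : d_X(x_0, h' x_0) \le 2C_0 + 1\}$ (finite because it injects into the corresponding ball in $A$, which is finite by the QI embedding of $A$). The factorization $h = s_1 \cdots s_N$ simultaneously exhibits $H = \langle F\rangle$ as finitely generated and the orbit map $H \to X$ as a quasi-isometric embedding.

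The main obstacle is the Key Lemma. My plan is to argue by contradiction using the alternative characterization of acylindricity along $A$ (Definition~\ref{def:newacyl}), available via Theorem~\ref{thm:newacyl}. Suppose $p \in \mathcal{N}_\kappa(Y_A) \cap \mathcal{N}_\kappa(Y_B)$ lies at distance at least some large $M$ from $Y_H$, and choose a second point $q$ in the coarse intersection at large $X$-distance from $p$. For each $b \in B$ with $bx_0$ close to $p$ or $q$, the translate $bY_A$ contains $bx_0$ and thus passes near $p$ or $q$; and two such elements $b, b'$ give the same coset $bA = b'A$ iff $b'^{-1} b \in A \cap B = H$, so distinct $H$-cosets in $B$ produce distinct $A$-cosets in $G$. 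For $M$ large enough one should be able to extract more than $N(\varepsilon, Y_A)$ distinct such $A$-cosets whose $\varepsilon$-neighborhoods all contain points close to both $p$ and $q$ (a common set of diameter exceeding $D(\varepsilon,Y_A)$), violating Definition~\ref{def:newacyl}. Calibrating how ``$p$ is far from $Y_H$'' produces enough distinct $H$-cosets in $B$ accumulating along a long arc of the coarse intersection, and paired with a well-placed point $q$, is the delicate technical step where I expect the work to lie.
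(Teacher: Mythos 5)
Your reduction is sound as far as it goes: acylindricity along $Y_H\subseteq Y_A$ is indeed inherited with the same constants (the paper makes the same observation at the end of its proof), and, granted your Key Lemma, the sampling argument along a geodesic from $x_0$ to $hx_0$ does give finite generation and the quasi-isometric embedding of $H=A\cap B$ (the paper leaves this standard step implicit, proving instead that $Hx_0$ is quasi-convex, which together with finiteness of bounded-displacement elements of $A$ amounts to the same thing). The genuine gap is that the Key Lemma---which is where all the content of the proposition lies---is not proved, and the sketch you give would not work as described. A translate $bY_A$ with $bx_0$ close to $p$ contains $bx_0$, but nothing in your setup makes it pass anywhere near $q$, so you get no lower bound on $\diam\bigl(\bigcap\mathcal N_\varepsilon(\,\cdot\,)\bigr)$ for these translates; and there is no mechanism producing more than $N(\varepsilon,Y_A)$ distinct such cosets in the first place, since by properness of the orbit map of $B$ the number of $b\in B$ with $bx_0$ in a bounded ball about $p$ may well be one. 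You flag this as ``the delicate technical step,'' but that step is the proposition.

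For comparison, the paper's proof resolves exactly these two difficulties and, notably, uses acylindricity in the opposite direction from your plan. Assuming $Cx$ (with $C=A\cap B$) is not quasi-convex, it takes points $y_i$ on geodesics $\gamma_i$ whose \emph{both} endpoints lie in $Cx$, finds $a_i\in A$, $b_i\in B$ with $a_ix,b_ix$ within $\lambda$ of $y_i$, and translates each configuration by $b_i^{-1}$ so that every translated geodesic passes near the fixed basepoint $x$; passing to a limit of geodesics in a Cayley graph of $B$ produces one fixed quasi-geodesic $\sigma$ which all the $b_i^{-1}\gamma_i$ fellow-travel. Quasi-convexity of $Ax$, applied to the geodesic from $b_i^{-1}a_ix$ to $b_i^{-1}c_ix$ (both endpoints in $b_i^{-1}Ax$), is what forces each \emph{single} translate $b_i^{-1}Ax$ to stay $\varepsilon$-close to a long common segment containing $x$ and a point $p$ on $\sigma$---precisely the ingredient your sketch lacks. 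Then Theorem~\ref{thm:newacyl} is used \emph{positively}: it forces infinitely many of the cosets $b_i^{-1}A$ to coincide with a fixed $b_0^{-1}A$, giving $b_ib_0^{-1}\in A\cap B$ with $d_X(b_ib_0^{-1}x,y_i)\le d_X(x,b_0x)+\lambda$, contradicting $d_X(y_i,Cx)\to\infty$; the fixed representative $b_0$ is what converts a coset coincidence into an element of $A\cap B$ uniformly close to $y_i$. (Your Key Lemma is in fact true and can be proved by the same scheme, with the far point of the coarse intersection in the role of $y_i$ and $\beta_i^{-1}$ in the role of $b_i^{-1}$, but the translation to the basepoint, the use of quasi-convexity along a segment with endpoints in a single translate of $Ax$, and the coincidence-of-cosets mechanism are exactly the ideas missing from your proposal.)
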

\begin{proof}
  Let $\delta$ be a constant of hyperbolicity for $X$, and let $x\in X$.  Since $A$ and $B$ are quasi-isometrically embedded by the action, there is some $\lambda>0$ so that the sets $Ax$ and $Bx$ are $\lambda$--quasi-convex.  
  There is also a constant $M$ so that any geodesic with endpoints in $Bx$ lies Hausdorff distance at most $M$ from the image of a geodesic in some fixed Cayley graph for $B$.
  
  Let $C = A\cap B$.  Since $Ax$ and $Bx$ are proper subspaces of $X$, $Cx$ is also a proper subspace of $X$.  Thus to prove $C$ is quasi-isometrically embedded by the orbit, it suffices to show that $Cx$ is quasi-convex in $X$.  We suppose by contradiction that it is not.  There is therefore a sequence of elements $\{c_i\}$ of $C$ so that geodesics $\gamma_i$ from $x$ to $c_ix$ travel arbitrarily far away from $Cx$.  In other words, there are points $y_i\in \gamma_i$ with $\lim_{i\to\infty}d_X(y_i,Cx)=\infty$.  Since $Ax$ and $Bx$ are $\lambda$--quasi-convex, there are, for each $i$, $a_i\in A$ and $b_i\in B$ so that both $d_X(a_ix,y_i)$ and $d_X(b_ix,y_i)$ are at most $\lambda$.  Translating by $b_i^{-1}$ we obtain a sequence of geodesics $b_i^{-1}\gamma_i$ joining $b_i^{-1}x$ to $b_i^{-1}c_i$.  The geodesic $b_i^{-1}\gamma_i$
  is Hausdorff distance at most $M$ from some quasi-geodesic $\sigma_i$
    which is the image of a geodesic $\tilde{\sigma}_i$ joining $b_i^{-1}$ to $b_i^{-1}c_i$ in a Cayley graph for $B$.  Passing to a subsequence we may assume the geodesics $\tilde{\sigma}_i$ converge to a bi-infinite geodesic $\tilde{\sigma}$ in the Cayley graph for $B$, whose image in $X$ is a bi-infinite quasi-geodesic $\sigma$.  

  Let $\varepsilon = M+\delta + 2\lambda$, and let $D =\max\{\varepsilon+1, D(\varepsilon,Ax)\}$, where $D(\varepsilon,Ax)$ is the constant from Theorem~\ref{thm:newacyl}, applied with $H = A$.  Fix a point $p$ on the quasi-geodesic $\sigma$ so that $d_X(x,p) \ge D$ and so that $p$ is within $M$ of the subsegment of $b_i^{-1}\gamma_i$ joining $b_i^{-1}y_i$ to $b_i^{-1}c_ix$ for all but finitely many $i$.  See Figure~\ref{fig:intersect}.
  \begin{figure}[htbp]
    \centering
    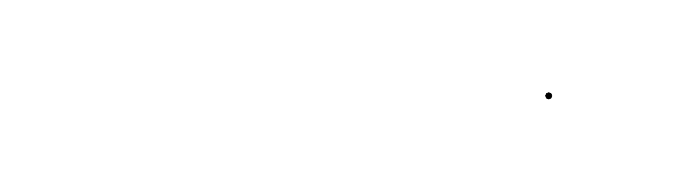
    \caption{The point $p$ is on the bi-infinite quasi-geodesic $\sigma$, at distance $D$ from $x$.  The index $i$ is assumed to be large enough that $\sigma_i$ contains $p$.}
    \label{fig:intersect}
  \end{figure}

  Discarding finitely many $i$, we may assume that $p$ is within $M$ of all these subsegments.  Let $p_i'$ be a nearest point to $p$ on the subsegment of $b_i^{-1}\gamma_i$ joining $b_i^{-1}y_i$ to $b_i^{-1}c_ix$, and consider a geodesic triangle with one side equal to this subsegment, and with the third corner equal to $b_i^{-1}a_i x$.  Let $p_i$ be a point within $\delta$ of $p_i'$ on one of the other two sides of this triangle.  We have $d_X(p,p_i)\le M+\delta$.  If $p_i$ lay on the side joining $b_i^{-1}a_ix$ to $b_i^{-1}y_i$, this would imply $d_X(p,x) \le M + 2\lambda + \delta$, a contradiction, since $d_X(p,x) \ge \varepsilon + 1$.  Thus $p_i$ lies on a geodesic joining  $b_i^{-1}a_ix$ to $b_i^{-1}c_ix$. 
Quasi-convexity of $Ax$ gives us an $\hat{a}_i\in A$ so that $b_i^{-1}\hat{a}_ix$ is at most $\lambda$ from $p_i$.  Thus we have $d_X(b_i^{-1}\hat{a}_ix,p)\le M+\delta+\lambda <\varepsilon$ and $d_X(b_i^{-1}a_ix,x)\le 2\lambda <\varepsilon$.  In particular, $x$ and $p$ are in $N_\varepsilon (b_i^{-1}Ax)$ for all $i$, so we have
\begin{equation*}
  \diam \left( \bigcap_i \mathcal N_\varepsilon (b_i^{-1}Ax)\right)\ge D.
\end{equation*}
By Theorem~\ref{thm:newacyl}, infinitely many of the cosets $b_i^{-1}A$ coincide.  Discarding terms and relabeling we may assume that $b_i^{-1}A = b_0^{-1}A$ for all $i$.  In particular we have $b_ib_0^{-1}\in C$ for every $i$.
But
\[ d_X(b_ib_0^{-1}x,y_i)\le d_X(b_i b_0^{-1} x, b_i x)+ d_X(b_i x, y_i) \le  d_X(x,b_0 x) + \lambda,\]
so $d_X(b_ib_0^{-1}x,y_i)$ is bounded independently of $i$.
This contradicts our choice of $y_i$, showing that in fact $Cx$ is quasi-convex.

The orbit $Cx$ is a subset of $Ax$, along which $G$ acts acylindrically, so $G$ acts acylindrically along $Cx$, and $(G,X,C)$ is an A/QI triple as desired.
\end{proof}

\subsection{A combination theorem}
In this subsection we prove Theorem \ref{thm:combination}, which we restate for the convenience of the reader.

\combination*

We note that by Proposition \ref{prop:finitecollection}, the assumption that each $(G,X,H_i)$ is an A/QI triple is equivalent to assuming that $(G,X,\{H_1,\dots, H_k\})$ is an A/QI triple.

Before proving the theorem, we need some preliminary lemmas.
We say that a path $q$ in a metric space $X$ is a \emph{piecewise quasi-geodesic} if $q$ can be expressed as a concatenation $q_1\cdot \ldots \cdot q_n$ where each $q_i$ is a continuous quasi-geodesic path.  The following lemma gives conditions under which a piecewise quasi-geodesic is a quasi-geodesic (with constants independent of the number of quasi-geodesic segments being concatenated).

\begin{lemma}[{\cite[Lemma 4.2]{Minasyan}}] \label{lem:brokenqgeod}
Let $x_0, x_1,\dots, x_n$ be points in a $\delta$--hyperbolic space $X$ and let $q_i$ be a continuous $\tau$--quasi-geodesic  from $x_{i-1}$ to $x_i$. Then for any $C_0\geq 14\delta$ and for $C_1=12(C_0+\delta)+\tau+1$, if $\ell_X(q_i)\geq \tau C_1$ and $(x_{i-1}\mid x_{i+1})_{x_i}\leq C_0$ for all $i$, then the concatenation $q_1\cdot \ldots\cdot q_n$ is a $\max\{4\tau, \frac52M+C_1\}$--quasi-geodesic, where $M$ is the Morse constant for the parameters $\tau,\delta$.
\end{lemma}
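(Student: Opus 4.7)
The plan is to parametrize the concatenation $q=q_1 \cdot \ldots \cdot q_n$ continuously and verify the two quasi-geodesic inequalities separately. The upper bound $d_X(q(s), q(t)) \leq K|s-t|+K$ follows by a telescoping triangle inequality across the pieces, using the per-piece $\tau$-quasi-geodesic bound and absorbing the per-junction errors into the additive constant via the length hypothesis $\ell_X(q_i) \geq \tau C_1$.

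The substantive content is the lower bound $|s-t| \leq K\, d_X(q(s), q(t)) + K$, a no-backtracking statement. I would prove it by a local-to-global argument whose key ingredient is the following local claim: there is a constant $R=R(\tau, \delta, M, C_0)$ such that whenever $y' \in q_k$ and $z' \in q_{k+1}$ each lie at $X$-distance at least $C_1/(2\tau)$ from the breakpoint $x_k$, any geodesic $[y', z']$ passes within $R$ of $x_k$. This is established in three steps: (i) Morse stability replaces each $q_k$ by the geodesic $[x_{k-1}, x_k]$ to within Hausdorff error $M$, so that $y'$ and $z'$ are each within $M$ of points $\bar y', \bar z'$ on the two adjacent geodesics emanating from $x_k$; (ii) two applications of Gromov's $\delta$-hyperbolic inequality $(a \mid c)_p \geq \min\{(a \mid b)_p, (b \mid c)_p\}-\delta$, using the hypothesis $(x_{k-1}\mid x_{k+1})_{x_k} \leq C_0$, yield $(\bar y'\mid \bar z')_{x_k} \leq C_0 + 2\delta$ once $\bar y', \bar z'$ are far enough from $x_k$; (iii) translating back to $y', z'$ and invoking hyperbolicity converts the Gromov product bound into $d_X(x_k, [y', z']) \leq C_0 + 3\delta + 2M$.

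With the near-passage claim in hand, I would take arbitrary $y=q(s)$ and $z=q(t)$ on pieces $q_i, q_j$ with $i \leq j$. The case $i = j$ is direct from the $\tau$-quasi-geodesic property of $q_i$ and contributes the $4\tau$ summand of $K$. For $i < j$, I decompose a geodesic from $y$ to $z$ at its near-passage points to the intermediate breakpoints $x_i, \ldots, x_{j-1}$; each resulting sub-segment sits within $R + M$ of the corresponding piece $q_k$, and its length is bounded below by the arc-length progress along $q_k$ minus a bounded per-junction error. Summing telescopes to a linear lower bound on $d_X(y, z)$ in terms of $|s-t|$, with the length hypothesis $\ell_X(q_i) \geq \tau C_1$ ensuring the per-piece progress dominates the per-junction error.

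The main obstacle is the constant-tracking needed to match the stated bound $K=\max\{4\tau, \tfrac{5}{2}M + C_1\}$. The two summands correspond to the same/adjacent-piece regime (the $4\tau$ term, handled directly by $\tau$-quasi-geodicity) and the multi-junction regime (the $\tfrac{5}{2}M + C_1$ term, whose error aggregates over up to $j-i-1$ interior breakpoints). The calibration $C_1 = 12(C_0+\delta) + \tau + 1$ is precisely what is required to make the per-junction Morse-plus-hyperbolicity error strictly dominated by the per-piece length $\tau C_1$, so that the telescoping sum closes into a genuine quasi-geodesic inequality rather than merely a superlinear one.
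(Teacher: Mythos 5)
First, a caveat on the comparison: the paper does not prove Lemma~\ref{lem:brokenqgeod} at all --- it is imported verbatim from \cite[Lemma 4.2]{Minasyan} --- so there is no in-paper argument to measure yours against. Judged on its own terms, your outline has the standard shape for broken-line lemmas, and the local ingredients are sound: the telescoping upper bound, the Morse replacement of each $q_k$ by $[x_{k-1},x_k]$, the two applications of the hyperbolic inequality (using that $(\bar y'\mid x_{k-1})_{x_k}$ and $(\bar z'\mid x_{k+1})_{x_k}$ equal distances to $x_k$ for points on those geodesics, hence are large) to get $(\bar y'\mid \bar z')_{x_k}\le C_0+2\delta$, and the conversion of a Gromov-product bound into a distance-to-geodesic bound.

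The genuine gap is in the step ``I decompose a geodesic from $y$ to $z$ at its near-passage points to the intermediate breakpoints $x_i,\ldots,x_{j-1}$.'' Your near-passage claim is proved only for points on \emph{adjacent} pieces $q_k$, $q_{k+1}$; it does not by itself imply that a geodesic from $y\in q_i$ to $z\in q_j$ with $j>i+1$ passes within bounded distance of every intermediate $x_k$. Knowing that each $[x_{k-1},x_{k+1}]$ cuts close to $x_k$ is a purely local statement, and promoting it to the global one is precisely the content of the lemma --- so as written the decomposition presupposes (a form of) the conclusion. The missing piece is an induction along the chain, e.g.\ propagating the bound $(x_i\mid x_{k+1})_{x_k}\le C_0+2\delta$ for all intermediate $k$: the inductive distance estimate $d(x_i,x_k)\ge d(x_i,x_{k-1})+d(x_{k-1},x_k)-2(C_0+2\delta)$ forces $(x_i\mid x_{k-1})_{x_k}\ge d(x_{k-1},x_k)-(C_0+2\delta)>C_0+\delta$ (this is where $C_1=12(C_0+\delta)+\tau+1$ is consumed), so the hyperbolic inequality pins the other term of the minimum and the bound propagates; summing the resulting inequalities gives the linear lower bound on $d(x_0,x_n)$, after which the near-passage of $[y,z]$ to each $x_k$ follows rather than precedes. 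You would also need to say a word about sub-segments whose initial or terminal piece is a short truncation of some $q_i$ (these violate the length hypothesis and are where the $\tfrac52 M$ in the final constant comes from), but that is a secondary issue next to the missing induction.
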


Let $G,X,$ and $H_1,\dots, H_k$ be as in the statement of Theorem \ref{thm:combination}.  We fix the following data.

 Let $T_i$ be a finite generating set for $H_i$.  Let $\delta>1$ be a hyperbolicity constant which suffices both for $X$ and for the Cayley graphs $\Cay(H_i,T_i)$.  We fix a basepoint $x\in X$ and let $\pi\from G\to X$ be the orbit map defined by $g\mapsto gx$. Since each $(G,X,H_i)$ is an A/QI triple, there is a constant $\tau$ such that each $H_ix$ is $\tau$--quasi-isometrically embedded in $X$.  Let $E$ be the constant from Lemma \ref{lem:gprodcontrol} with this choice of $\tau$ and $\delta$.  

Since $\Lambda(H_i)\cap \Lambda(H_j)=\emptyset$ for all $i\neq j$, there is a constant $\kappa$ such that 
\begin{equation}\label{eqn:Hkappa}
(\xi \mid \xi')_{x}<\kappa
\end{equation}
for all $\xi\in \Lambda(H_i)$ and $\xi'\in \Lambda(H_j)$.

 Let 
 \[
 \eta=\max\{d_X(x,tx)\mid i=1,\dots, k \textrm{ and } t\in T_i\},
 \]
  which is well-defined since  $\#T_i<\infty$ for each $i$.

For each $i$ and each $t\in T_i$, fix a geodesic $[x,tx]$ in $X$.  Since each $H_ix$ is $\tau$--quasi-isometrically embedded in $X$, for any  $i$ and any geodesic word $t_1\dots t_\ell$ where $t_j\in T_i$, the concatenation 
\[
[x,t_1x]\cdot [t_1x,t_1t_2x]\cdot \ldots \cdot [t_1\dots t_{\ell-1}x,t_1\dots t_\ell x]
\]
is a $\tau$--quasi-geodesic in $X$.  Let $M$ be the Morse constant for parameters $\tau$ and $\delta$. \\

\putinbox{For the remainder of the subsection, the following data is fixed: the sets $T_i$, the basepoint $x$, the map $\pi$, the constants $\delta$, $\tau$, $E$, $\kappa$, $\eta$, and $M$, and the geodesics $[x,tx]$.}
\\

Given any collection of subgroups $\mc{K} = \{K_i\leq H_i\}$, define a metric graph $\Gamma(\mc{K})$ as follows.  

\begin{definition}\label{def:Gamma}
Let $\Cay(H_i,T_i)$ be the Cayley graph of $H_i$ with respect to the finite generating set $T_i$.  We will build $\Gamma(\mc{K})$ inductively.  At the base stage, we begin with a single Cayley graph $\Cay(H_1,T_1)$, and identify $K_1$ with the orbit of the identity under $K_1$.  At each point in $K_1\subset \Cay(H_1,T_1)$, attach a disjoint copy of $\Cay(H_j,T_j)$ for each $j\neq 1$ along the identity $1\in\Cay(H_j,T_j)$. That is, for each $h\in K_1$ and each $j\neq 1$, we identify the identity  $1$ in a disjoint copy of  $\Cay(H_j,T_j)$ with the vertex $h\in \Cay(H_1,T_1)$. At each subsequent step, we  repeat the same process for each new Cayley graph $\Cay(H_j,T_j)$, by attaching a disjoint copy of $\Cay(H_i,T_i)$ for each $i\neq j$ to each point in $K_j\subset\Cay(H_j,T_j)$ along the identity $1\in\Cay(H_i,T_i)$.  

The resulting graph $\Gamma(\mc{K})$ is the smallest subgraph of $\Cay(H_1*\cdots*H_k,\cup_{i=1}^k T_i)$ which is invariant under the action of $K_1*\cdots * K_k \le H_1*\cdots *H_k$ and contains $\Cay(H_i,T_i)$ for each $i$.

Vertices in $\Gamma(\mc{K})$ come in two types: those that lie in the orbit of the identity under $K_i$ in some copy of $\Cay(H_i,T_i)$, and those that do not.  We call vertices of the first type \emph{wedge vertices}.  

Each edge in $\Gamma(\mc{K})$ is an edge of some $\Cay(H_i,T_i)$ and so is labeled by an element $t\in T_i$.  We define the length of such an edge to be $d_X(x,tx)$.
\end{definition}

Define a map $\psi_{\mc{K}}\from \Gamma(\mc{K})\to X$ by sending each edge whose initial vertex is the identity and whose label is  some $t\in T_i$ to the geodesic $[x,tx]$ chosen above, and extending equivariantly by the action of $K_1*\cdots *K_k$ on $\Gamma(\mc{K})$.  By definition, $\psi_{\mc{K}}$ restricts to an isometry on each edge.  The metric graph $\Gamma(\mc{K})$ and the map $\psi_{\mc{K}}$ depend on the choice of  subgroups $\mc{K}$.  However, we will show that for certain choices of subgroups, the map $\psi_{\mc{K}}$ is a uniform quasi-isometric embedding.
\\

\putinbox{We also fix constants
  \begin{align*}
    C_0 & = \kappa+E+14\delta + \frac{3}{2}(2 \tau\delta + \tau) \\
    C_1 & =12(C_0 + \delta)+\tau+1 \\
    \mu & = \max\{4\tau, \frac52M+C_1\} + \tau C_1       
  \end{align*}
and let
$\mathcal S_1=\{h\in \bigcup_{i=1}^k H_i\setminus\{1\}\mid d_X(x,hx)\leq \tau C_1\}$.
}\\

\begin{lemma}\label{claim:qiemb}
Let $\mc{K} = \{K_i\leq H_i\}$ be a family of quasi-isometrically embedded subgroups satisfying 
\[
K_i\cap \mathcal S_1=\emptyset,
\] and let $\Gamma(\mc{K})$ be the graph from Definition \ref{def:Gamma} with this choice of subgroups $K_i$.
The map $\psi_{\mc{K}}\from \Gamma(\mc{K})\to X$ is a $\mu$--quasi-isometric embedding which takes every geodesic in $\Gamma(\mc{K})$ to a tame $\mu$--quasi-geodesic.
\end{lemma}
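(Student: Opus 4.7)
The strategy is to show that for any geodesic $\gamma$ in $\Gamma$, its image $\psi(\gamma)$ is a tame $\mu$--quasi-geodesic in $X$; since $\psi$ sends each edge isometrically to a geodesic segment and is therefore $1$--Lipschitz, this immediately yields the quasi-isometric embedding property. The main tool is Lemma \ref{lem:brokenqgeod}, applied to a natural decomposition of $\psi(\gamma)$ into pieces, each lying in a single factor copy.

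Decompose $\gamma = \gamma_1\cdots\gamma_n$ at the wedge vertices where $\gamma$ transitions between factor copies, so that each $\gamma_j$ lies entirely in a single copy of $\Cay(H_{i_j}, T_{i_j})$. Because factor copies meet only at individual wedge vertices in a tree-like pattern inside the free-product Cayley graph, consecutive pieces must belong to distinct factors: $i_j\ne i_{j+1}$. Setting $q_j := \psi(\gamma_j)$, each $q_j$ is a tame $\tau$--quasi-geodesic in $X$, because $H_{i_j}x$ is $\tau$--quasi-isometrically embedded by hypothesis and $\gamma_j$ follows a geodesic through a Cayley graph of $H_{i_j}$; tameness is immediate from the concatenation structure of $q_j$ as a sequence of geodesic segments of length at most $\eta$.

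For an intermediate piece ($1 < j < n$), the endpoints $v_{j-1}$ and $v_j$ of $\gamma_j$ are distinct wedge vertices in the same factor copy, so they differ by a nontrivial element $k\in K_{i_j}$. The hypothesis $K_{i_j}\cap\mathcal{S}_1=\emptyset$ forces $d_X(x,kx) > C_1$, which supplies the length bound on $q_j$ required by Lemma \ref{lem:brokenqgeod}. To verify the Gromov product hypothesis, write $x_j := \psi(v_j)$ and extend the reverse of $q_j$ from $x_j$ along an infinite geodesic ray in $\Cay(H_{i_j},T_{i_j})$ passing through $v_{j-1}$; since the $H_{i_j}$-orbit is quasi-isometrically embedded, its $\psi$-image is a $\tau$--quasi-geodesic ray in $X$ converging to some $\alpha_j^-\in\Lambda(H_{i_j})$ and passing through $x_{j-1}$. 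Similarly extend $q_{j+1}$ to a ray from $x_j$ passing through $x_{j+1}$ and converging to $\alpha_{j+1}^+\in \Lambda(H_{i_{j+1}})$. Since $i_j\ne i_{j+1}$, these two limit sets are disjoint, so by the defining property of $\kappa$ we have $(\alpha_j^-\mid \alpha_{j+1}^+)_{x_j}\le \kappa$; Lemma \ref{lem:gprodcontrol} then transfers this bound to $(x_{j-1}\mid x_{j+1})_{x_j}\le \kappa+E\le C_0$.

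Applying Lemma \ref{lem:brokenqgeod} now yields that $\psi(\gamma) = q_1\cdots q_n$ is a tame $\mu$--quasi-geodesic, with the extra slack built into $\mu$ beyond $\tfrac{5}{2}M'+C_1$ absorbing the contribution from the first and last pieces (which may fail the length hypothesis but can be handled by a standard trimming argument). The main obstacle is the Gromov product bound at transition vertices: one must construct the appropriate extensions to quasi-geodesic rays landing in $\Lambda(H_{i_j})$ and $\Lambda(H_{i_{j+1}})$, ensure that these rays actually pass through $x_{j-1}$ and $x_{j+1}$ respectively so that Lemma \ref{lem:gprodcontrol} applies, and check that the resulting bound is independent of the particular geodesic $\gamma$ and of the choice of subgroups $K_i$.
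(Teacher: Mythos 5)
Your proposal follows essentially the same route as the paper: decompose a geodesic of $\Gamma$ at the wedge vertices, note that each piece maps to a $\tau$--quasi-geodesic whose endpoints are far apart in $X$ because $K_i\cap\mathcal S_1=\emptyset$, bound the Gromov products at the transition points using disjointness of the limit sets, the constant $\kappa$, and Lemma~\ref{lem:gprodcontrol}, then apply Lemma~\ref{lem:brokenqgeod}, handling the possibly short first and last pieces separately with the slack built into $\mu$. This is the paper's argument, and the point you flag about realizing the transition points on quasi-geodesic rays tending to the limit sets is treated no more explicitly in the paper's own proof.
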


\begin{proof}
  For ease of notation, we write $\Gamma$ for $\Gamma(\mc{K})$ and $\psi$ for $\psi_{\mc{K}}$.
A geodesic edge path in $\Gamma$ is either completely contained in a copy of $\Cay(H_i,T_i)$ for some $i$ or passes through multiple Cayley graphs.  In the first case, the image of the path will be a $\mu$--quasi-geodesic because each $\Cay(H_i,T_i)$ is $\tau$--quasi-isometrically embedded in $X$ and $\mu>\tau$.  

Thus we focus on geodesic edge paths in $\Gamma$ which pass through multiple Cayley graphs.
Note that such a path $\sigma$ can only change Cayley graphs at wedge vertices, which are necessarily in the orbit of the identity under some $K_i$.  Thus $\sigma$ is the concatenation of geodesic edge paths 
\begin{equation}\label{eqn:geodpath}
\sigma_1\cdot \sigma_2\cdot \ldots\cdot \sigma_p,
\end{equation}
where  each $\sigma_\ell$ is a maximal geodesic edge path contained in some $\Cay(H_{i_\ell},T_{i_\ell})$ with $H_{i_\ell}\neq H_{i_{\ell+1}}$.  Note that the terminal vertex of $\sigma_1$ is a wedge vertex,  the initial vertex of $\sigma_p$ is a wedge vertex, and, if $2\leq \ell\leq p-1$, then both endpoints of $\sigma_\ell$ are wedge vertices.  Moreover,  for $2\leq \ell\leq p-1$, the label of each $\sigma_\ell$ is an element of $K_{i_\ell}$ by construction.

Our main tool for understanding the image of such paths in $X$ will be Lemma \ref{lem:brokenqgeod}, applied with the constants $C_0,C_1$ defined just before the statement of the lemma.

For each $i$ and each  $h\in K_i\setminus\{1\}$,  we have  
 \begin{equation}\label{eqn:Hedgelength}
d_X(x,hx)>\tau C_1,
\end{equation}
by our choice of subgroups $K_i$.

Fix some $\ell$.  By translating by an element of $G$ we may assume the image of the terminal point of $\sigma_\ell$ (and therefore the image of the initial point of $\sigma_{\ell+1}$) is $x$.  Let $z_1$ be the initial point of $\sigma_\ell$ and $z_2$ the terminal point of $\sigma_{\ell+1}$.  Let $y_1 = \psi(z_1)$ and $y_2 = \psi(z_2)$.
We have $z_1\in H_\ell$, and $z_2\in H_{\ell+1}$.  Since $H_\ell$ is an infinite $\delta$--hyperbolic group, there is some geodesic ray $r_1$ in $H_\ell$ starting at the identity and passing within $2 \delta$ of $z_1$ (see for example \cite[Lemma 3.1]{BestvinaMess}).  Similarly there is a geodesic ray $r_2$ in $H_{\ell+1}$ starting at the identity and passing within $2 \delta$ of $z_2$.  For $i\in \{1,2\}$ the image $\psi\circ r_i$ is a $\tau$--quasi-geodesic in $X$, starting at $x$ and passing through some point $y_i'$ within $2\tau\delta+\tau$ of $y_i$. These quasi-geodesic rays limit to points $\xi_1 \in \Lambda(H_\ell)$ and $\xi_2\in\Lambda(H_{\ell + 1})$, respectively.
We find (using Lemma~\ref{lem:gprodcontrol} in the second line)
\begin{align*}
  (y_1\mid y_2)_{x} & \le (y_1'\mid y_2')_{x} + \frac{3}{2}(2 \tau\delta + \tau)\\
                    & \le (\xi_1\mid \xi_2)_{x}+E + \frac{3}{2}(2 \tau\delta + \tau) \\
  & \le \kappa +E + \frac{3}{2}(2 \tau\delta + \tau)< C_0.
\end{align*}
Combining this bound with the inequality~\ref{eqn:Hedgelength} and Lemma~\ref{lem:brokenqgeod} we see that if $\sigma_1$ and $\sigma_p$ are empty, then $\sigma$ is mapped by $\psi$ to a $\max\{4\tau, \frac52M+C_1\}$--quasi-geodesic (and hence a $\mu$--quasi-geodesic, since $\mu = \max\{4\tau, \frac52M+C_1\}+\tau C_1$).

Suppose $\sigma_p=\emptyset$, while $\sigma_1\neq\emptyset$, and suppose the initial vertex of $\sigma_1$ is an arbitrary vertex $a$ of $\Cay(H_{i_1},T_{i_1})$ and  the terminal vertex is a wedge vertex $b$.  If $d_X(ax,bx)>\tau C_1$ then the argument above shows that $\sigma$ is mapped by $\psi$ to a $(\mu-\tau C_1)$--quasi-geodesic.  On the other hand, if $d_X(ax,bx)\leq \tau C_1$, then $\sigma$ is mapped by $\psi$ to a $\mu$--quasi-geodesic in $X$.  A similar argument holds when $\sigma_2\neq\emptyset$ and when both $\sigma_1$ and $\sigma_2$ are non-empty.  In any case, $\sigma$ is mapped by $\psi$ to a $\mu$--quasi-geodesic in $X$.

Since $\psi$ restricted to any geodesic in $\Gamma$ gives a (tame) $\mu$--quasi-geodesic, the entire map $\psi$ is a $\mu$--quasi-isometric embedding. 
\end{proof}
\begin{corollary}\label{cor:QIFreeProd}
  Under the assumptions of Lemma~\ref{claim:qiemb} the subgroup $K =\langle\bigcup\mc{K}\rangle$ is isomorphic to the free product $K_1\ast\cdots\ast K_k$ and is quasi-isometrically embedded by the action $G\acts X$.
\end{corollary}
\begin{proof}
  Let $\phi\from K_1\ast\cdots\ast K_k\to K$ be the canonical map.  It is clearly surjective.   To see that it is injective, suppose $f$ is a non-trivial element of $K_1\ast \cdots \ast K_k$.   If $f\in K_i$ for some $i$ then $\phi(f)\neq 1$, by our choice of $K_i$, so $f$ must have a normal form involving at least two factors.  It is then straightforward to see that $f$ has a bi-infinite geodesic axis in $\Gamma$.  By Lemma~\ref{claim:qiemb}, this bi-infinite geodesic has image a bi-infinite $\mu$--quasi-geodesic in $X$, on which $\langle \phi(f)\rangle$ acts co-compactly.  In particular, $\phi(f)$ is non-trivial.

  The free product $K_1\ast \cdots \ast K_k$ is quasi-isometrically embedded by its action on $\Gamma_{\mc{K}}$, since each $K_i$ is undistorted in $H_i$.  Again using Lemma~\ref{claim:qiemb} this implies that the orbit map $k \mapsto k x$ quasi-isometrically embeds $K$ in $X$.
\end{proof}

We are now ready to prove Theorem \ref{thm:combination}.

\begin{proof}[Proof of Theorem \ref{thm:combination}]
We continue to use the data fixed after Lemma \ref{lem:brokenqgeod}.
 Let $M'$ be the Morse constant for parameters $\mu$ and $\delta$, and 
fix
\[
\varepsilon'=2M' + M + 10\delta.
\] 
The action of $G$ of $X$ is assumed to be acylindrical along $\{H_1,\dots, H_k\}$.  For each $H_i$, let $Y_i$ be $\psi(\Cay(H_i,T_i))$, and 
let $D=D(\varepsilon',\{Y_1,\dots, Y_k\})$ and  $N=N(\varepsilon', \{Y_1,\dots, Y_k\})$ be the constants of acylindricity of this action, as described in Definition~\ref{def:many subgroups}.

Fix a constant 
\begin{equation}\label{eqn:C}
C> \max\{ \mu(2D+8\delta+2M'+1), \tau C_1\},
\end{equation}
and let 
\[
\mathcal S=\{h\in \bigcup_{i=1}^k H_i\setminus\{1\}\mid d_X(x,hx)\leq C\}.
\]

Since the subgroups $H_i$ are uniformly quasi-isometrically embedded in $X$ by the orbit map, there is a constant $P$ so that for any $i$ and any $h\in H_i$, the number of elements $h'\in H_i$ so that $d_X(hx,h'x)\leq \frac{C}{\mu} +2M+2M'+12\delta-1$ is at most $P$.

Let $\mc{K} = \{K_i\leq H_i\}$ be a family of quasi-isometrically embedded subgroups satisfying
\[
K_i\cap \mathcal S=\emptyset,
\] and let $\Gamma = \Gamma_{\mc{K}}$ be the graph from Definition \ref{def:Gamma} with this choice of subgroups $K_i$.  Since  $C > \tau C_1$, we have $\mathcal S_1 \subset \mathcal S$.   Corollary~\ref{cor:QIFreeProd} therefore implies that $K\cong K_1\ast \cdots \ast K_k$ and that $K$ is quasi-isometrically embedded by the action of $G$ on $X$.

It remains to show that $G$ acts acylindrically along $K$.  
Fix $\varepsilon>0$, and let 
\[
R=\frac{C}{\mu}-1+2\varepsilon.
\]
Let $\Delta=\{gK\}$ be a collection of distinct cosets satisfying 
\[
\diam\left(\bigcap_{gK\in \Delta}\mathcal N_\varepsilon(gKx)\right)>R.
\]
We will give a uniform bound on $\#\Delta$; namely we will show $\#\Delta\le 2NP$.  To do so, we will show that if $\#\Delta > 2NP$, we contradict the acylindricity of the action of $G$ on $X$ along $\{H_1,\dots, H_k\}$.

Let $\gamma$ be a geodesic segment in $X$ of length at least $R$ whose endpoints are contained in the $\varepsilon$--neighborhood of $gKx$ for all $gK\in \Delta$.
We  consider an arbitrary $gK\in \Delta$ but for each such coset fix a representative $g$.
By Lemma~\ref{claim:qiemb}, there is a tame $\mu$--quasi-geodesic  $\beta_g$  in $g\psi(\Gamma)$ with endpoints within $\varepsilon$ of the endpoints of $\gamma$. 

Let $\gamma'$  be an arbitrary subgeodesic of $\gamma$  with endpoints at distance at least $\varepsilon$ from the endpoints of $\gamma$ and whose length is exactly $\frac{C}{\mu}-1-2M'-8\delta$.  By our choice of $R$, such a subgeodesic exists. 
By Lemma~\ref{lem:unifslimquads}, there is a subsegment $\xi_g$ of the geodesic joining the endpoints of $\beta_g$ which has endpoints within $4\delta$ of those of $\gamma'$ and which is Hausdorff distance at most $6\delta$ from $\gamma'$. The segment $\xi_g$ is Hausdorff distance at most $2M'+2\delta$  from a subpath $\beta_g'$ of $\beta_g$ whose endpoints are within $M'$  of those of $\xi_g$.  To see this, note that we can find points on $\beta_g$ at distance at most $M'$ from the endpoints of $\xi_g$ by the Morse property of quasi-geodesics.  Now consider the geodesic quadrilateral formed by these two points and the endpoints of $\xi_g$; this quadrilateral is $2\delta$--slim, so the Hausdorff distance between $\xi_g$ and the opposite side is at most $2\delta+M'$.  Finally, the opposite side is at Hausdorff distance at most $M'$ from the subpath $\beta_g'$ of $\beta_g$ connecting its endpoints by the Morse property of quasi-geodesics. 

In particular, the Hausdorff distance between $\gamma'$ and $\beta_g'$ is at most $2M'+8\delta$ and the endpoints of $\beta_g'$ are at distance at most $M'+4\delta$ from those of $\gamma'$.  See Figure \ref{fig:combthm}.
\begin{figure}
\centering
\def\svgwidth{5in}
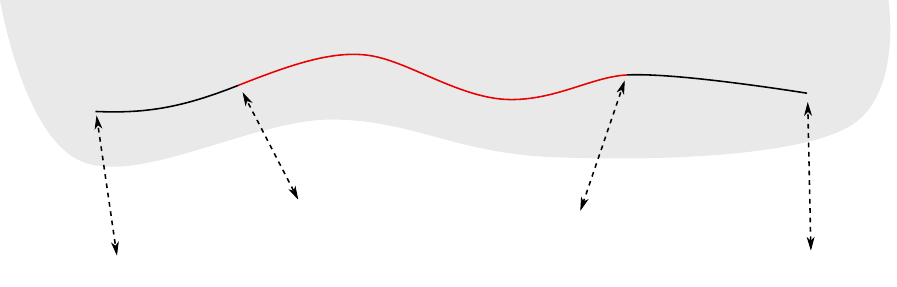
\caption{Finding the subpath $\beta_g'$ in the proof of Theorem \ref{thm:combination}.}
\label{fig:combthm}
\end{figure}

Each quasi-geodesic $\beta_g$ is a translate by $g$ of the image of a geodesic $\sigma_g$ in $\Gamma$ of the form 
\[
\sigma_g=\sigma_{g,1}\cdot \ldots \cdot \sigma_{g,p_g}
\]
as in \eqref{eqn:geodpath}.

We have 
\[
\frac{C}{\mu}-4M'-16\delta-1 \leq \ell_X(\beta_g') \leq \mu\left(\frac{C}{\mu}-1\right)+\mu = C.
\]
There may be the image of a wedge vertex in $\beta_g'$ for some $g$, but since the image of a geodesic between wedge vertices is a path of length at least $C$, there can be at most one such vertex in $\beta_g'$.   Suppose there is such a vertex $w$ in $\beta_g'$, dividing $\beta_g'$ into two $\tau$--quasi-geodesics.  Let $z$ be a closest point on $\gamma'$ to $w$.   Then $d_X(z,w)\leq 2M' + 8\delta$.  We also have that the initial points of $\gamma'$ and $\beta_g'$ are at distance at most $M' + 4\delta$. Therefore the initial subgeodesic $\gamma''$ of $\gamma'$ ending at $z$ and a geodesic $\beta_g''$ from the initial point of $\beta_g'$ to  $w$ are  at Hausdorff distance most $2M' + 10\delta$.  Moreover, the Hausdorff distance between the geodesic $\beta_g''$ and the corresponding subpath of $\beta_g'$ is at most $M$, since that subpath is a $\tau$--quasi-geodesic.
It follows that  $\gamma''$ is Hausdorff distance at most $2M' + M + 10\delta$ from the corresponding initial subpath of $\beta_g'$.  An analogous argument shows that the terminal subpath of $\gamma'$ starting at $z$ is at Hausdorff distance $2M' + M + 10\delta$ from the terminal subpath of $\beta_g'$ starting at $w$.

Therefore, for each $g$ as above there exists $\sigma_{g,j(g)}$ such that either the initial or terminal half of $\gamma'$ is contained in the $(2M' + M + 10\delta)$--neighborhood of $g\psi(\sigma_{g,j(g)})$, regardless of the number of images of wedge vertices appearing in $\beta_g'$.  Let $y_g$ be the initial vertex of the subpath of $g\psi(\sigma_{g,j(g)})$ contained in $\beta_g'$.  Then $y_g$ is either a wedge vertex or the initial vertex of $\beta_g'$, and lies in some translate of $Y_{j(g)}$.  (Recall that $Y_{i}=\psi(\Cay(H_i,T_i))$.)  The previous paragraph shows that 
\begin{equation}\label{eqn:distyg}
d_X(y_g,\gamma')\leq 2M'+M+10\delta.
\end{equation}   
If $\#\Delta>2NP$, then there exists a subcollection $\Delta'\subseteq\Delta$ with $\#\Delta'>NP$ such that the $(2M' + M + 10\delta)$--neighborhood of $g\psi(\sigma_{g,j(g)})$ contains (without loss of generality) the initial half of $\gamma'$ for all $gK\in \Delta'$.

However, $\ell_X(\gamma')=\frac{C}{\mu} - 2M' - 8\delta - 1 > 2D$ by \eqref{eqn:C}, and so, recalling that $\varepsilon' = 2M'+M+10\delta$, we have
\begin{equation}\label{eqn:Delta'D}
  \diam\left(\bigcap_{gK\in\Delta'}\mathcal N_{\varepsilon'}(g\psi(\sigma_{g,j(g)}))\right)>D.
\end{equation}
For each $g$, there is some $k_g\in K$ so that $g\psi(\sigma_{g,j(g)})$ is contained in $gk_gY_{j(g)}$.  Equation~\eqref{eqn:Delta'D} shows that the diameter of the intersection of the $\varepsilon'$--neighborhoods of the image of these Cayley graphs in $X$ is larger than $D$.  In order to contradict the acylindricity of the action of $G$ on $X$ along $\{H_1,\dots, H_k\}$, we must show that at least $N$ of these paths $g\psi(\sigma_{g,j_g})$ lie in  translates of these Cayley graphs corresponding to \emph{distinct} elements of $\bigsqcup G/H_i$.

 Since $\ell(\gamma')=\frac{C}{\mu}-2M'-8\delta-1$, it follows from \eqref{eqn:distyg} that for distinct $gK,g'K\in \Delta'$, we have 
\[
d_X(y_g,y_{g'})\leq 2(2M' + M + 10\delta) + \frac{C}{\mu} - 2M' - 8\delta - 1 =\frac{C}{\mu} + 2M + 2M' + 12\delta -1.
\]
By our choice of constant $P$, there must be a further subcollection $\Delta''\subseteq\Delta'$ with $\#\Delta''>N$ so that for distinct $gK,g'K\in \Delta''$, either $j(g)\neq j(g')$ or the vertices $y_g$ and $y_{g'}$, and therefore $g\psi(\sigma_{g,j(g)})$ and $g'\psi(\sigma_{g',j(g)})$,  lie in  translates of $Y_{j(g)}$ corresponding to distinct elements of $\bigsqcup G/H_i$.  In either case, this contradicts the acylindricity of the action of $G$ on $X$ along $\{H_1,\dots, H_k\}$.  We conclude $\#\Delta\le 2NP$, and so the action of $G$ on $X$ is acylindrical along $K$.  This completes the proof of the theorem.
\end{proof}

Collections of cyclic subgroups generated by WPD elements (see below) give an important special case of Theorem~\ref{thm:combination}.

\begin{definition} \label{def:WPD}
Let $G$ act on a hyperbolic space $X$, and let $g\in G$ act as a loxodromic isometry of $X$.  We say $g$ is a \emph{WPD element of $G$ (with respect to $X$)} if for every $x\in X$ and every $\varepsilon \geq 0$, there exists $N>0$ such that the set 
\[
\{\gamma\in G\mid d(x,\gamma x)\leq \varepsilon \textrm{ and } d_X(g^Nx,\gamma g^Nx)\leq \varepsilon\}
\] 
is finite.  The action of $G$ on $X$ is a \emph{WPD action} if $G$ is not virtually cyclic, $G$ contains at least one element which is loxodromic with respect to the action on $X$, and every loxodromic element is a WPD element.
\end{definition}

Suppose $g$ is a loxodromic WPD element with respect to an action of $G$ on a hyperbolic space.  Since $g$ is loxodromic, the subgroup $\langle g\rangle$  is quasi-isometrically embedded by the orbit map.  Moreover, the definition of a WPD element immediately implies that the action of $G$ is acylindrical along $\langle g\rangle$.
Recall that loxodromic isometries of a hyperbolic space are \emph{independent} if their fixed point sets at infinity are disjoint.  The following is clear from the definitions.
\begin{lemma}
If a group $G$ acts on a hyperbolic space $X$, then given any finite collection $h_1,\dots, h_k$ of independent loxodromic WPD elements with respect to this action, $(G,X,\{\langle h_1\rangle ,\dots, \langle h_k\rangle\})$ is an A/QI triple.
\end{lemma}   

The following corollary is then immediate from Theorem \ref{thm:combination}. 

\begin{corollary} \label{prop:WPDtoAQI}
Let $G$ act on a hyperbolic space $X$, and suppose $h_1,\dots, h_k$ are independent loxodromic WPD elements with respect to this action.  There exists a constant $D\geq 1$ such that for any $d_1,\dots, d_k\geq D$, if $H=\langle h_1^{d_1},\dots, h_k^{d_k}\rangle$, then $(G,X,H)$ is an A/QI triple.  In particular, every element in $H$ is a loxodromic WPD element with respect to the action of $G$ on $X$.
\end{corollary}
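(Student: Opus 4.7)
The plan is to derive this as a direct application of Theorem \ref{thm:combination} with $H_i := \langle h_i\rangle$. First I would verify that $(G,X,\{\langle h_i\rangle\}_{i=1}^k)$ satisfies the hypotheses of the theorem. Since each $h_i$ is loxodromic, $\langle h_i\rangle$ is infinite cyclic and its orbit is a quasi-axis, so it is quasi-isometrically embedded by the action. The WPD condition on $h_i$ is precisely what is needed to conclude that $G \acts X$ is acylindrical along $\langle h_i\rangle$ (this is the viewpoint flagged in the paper after Definition~\ref{def:acyl}: an action with a loxodromic WPD element is acylindrical along any axis of that element). Combined with the independence hypothesis $\Lambda(\langle h_i\rangle)\cap \Lambda(\langle h_j\rangle)=\emptyset$ for $i\neq j$, each $(G,X,\langle h_i\rangle)$ is then an A/QI triple, and the hypotheses of Theorem~\ref{thm:combination} are satisfied.

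Theorem~\ref{thm:combination} then produces a finite set $\mathcal S\subset \bigcup_i\langle h_i\rangle$. I would take $D\geq 1$ to be strictly larger than the maximum of $|m|$ over all nonzero integers $m$ such that $h_i^m\in\mathcal S$ for some $i$; such a bound exists because $\mathcal S$ is finite and each $\langle h_i\rangle\cong\mathbb Z$. Then for any $d_1,\dots,d_k\geq D$, the subgroup $K_i := \langle h_i^{d_i}\rangle$ is quasi-isometrically embedded in $\langle h_i\rangle$ (cyclic subgroups of $\mathbb Z$ are undistorted), and $K_i\cap \mathcal S=\emptyset$ because every nonzero element $h_i^{nd_i}$ has exponent of absolute value at least $D$. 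Applying Theorem~\ref{thm:combination} directly yields $H=\langle h_1^{d_1},\dots,h_k^{d_k}\rangle\cong \langle h_1^{d_1}\rangle\ast\cdots\ast\langle h_k^{d_k}\rangle$ and that $(G,X,H)$ is an A/QI triple.

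For the final clause, $H$ is free on the generators $h_i^{d_i}$, so every nontrivial $h\in H$ has infinite order. Cyclic subgroups of free groups are undistorted, and since $H$ is quasi-isometrically embedded in $X$ by the triple $(G,X,H)$, the orbit map $\langle h\rangle\to X$ is itself a quasi-isometric embedding; hence $h$ is loxodromic. For WPD, I would observe that $\langle h\rangle x\subseteq Hx$, so the constants $R,M$ from Definition~\ref{def:acyl} that witness acylindricity along $Hx$ also witness acylindricity along $\langle h\rangle x$ simply by restricting the pairs $(x,y)$ allowed in the definition; this yields the WPD property for $h$. The main obstacle is essentially bookkeeping rather than any substantive difficulty: all the analytic work is packaged inside Theorem~\ref{thm:combination}, and the only mild subtlety is choosing $D$ large enough that the cyclic subgroups $\langle h_i^{d_i}\rangle$ avoid the finite exceptional set $\mathcal S$.
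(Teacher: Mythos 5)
Your proposal is correct and takes essentially the same route as the paper, which simply notes that independent loxodromic WPD elements make $(G,X,\{\langle h_1\rangle,\dots,\langle h_k\rangle\})$ an A/QI triple (WPD giving acylindricity along each axis, loxodromicity giving the quasi-isometric embedding, independence giving disjoint limit sets) and then invokes Theorem~\ref{thm:combination}. Your choice of $D$ to make $\langle h_i^{d_i}\rangle$ miss the finite exceptional set $\mathcal S$, and the restriction argument deducing loxodromicity and WPD of elements of $H$ from the A/QI triple $(G,X,H)$, are exactly the details the paper leaves implicit.
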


\begin{remark} \label{rem:notHE}
  Let $G$, $h_1,\ldots,h_k$ be as in the corollary.  For each $i$ let $E_i$ be the maximal elementary subgroup containing $h_i$.  Each $E_i$ is hyperbolically embedded in $G$, as is any subcollection of $\{E_1,\ldots,E_k\}$ obtained by choosing one subgroup per conjugacy class \cite[Theorem 6.8]{DGO}.  However, the subgroup $H$ in the conclusion of the corollary is typically not hyperbolically embedded, since it will not be almost malnormal if for some $i$ either  $E_i \neq \langle h_i\rangle$ or  $d_i>1$.
\end{remark}

\subsection{Finiteness of height}
We now turn our attention to the height of subgroups in A/QI triples.  We recall the definition.
\begin{definition}[Height]
  Let $G$ be a group and $H<G$.  If $H$ is finite the height of $H$ is $0$.  Otherwise the height of $H$ in $G$ is the largest $n$ so that there are distinct cosets $\{g_1 H,\ldots, g_n H\}$ so that the intersection $\bigcap g_i H g_i^{-1}$ is infinite.  If there is no largest $n$ we say the height is infinite.
\end{definition}

Quasi-convex subgroups of hyperbolic groups have finite height \cite{GMRS}, as do full
  relatively quasi-convex subgroups of relatively hyperbolic groups \cite{HruskaWise09}.   Finiteness of height for convex cocompact subgroups of mapping class groups and $\operatorname{Out}(\mathbb F_n)$ was established in \cite{DahmaniMj}, and, more generally, for stable  and even strongly quasi-convex subgroups of  finitely generated groups in \cite{AMST19, Tran}.  (Stable subgroups are discussed in more detail in Section \ref{sec:stability}.)

\begin{proposition}\label{prop:finiteheight}
  Suppose $(G,X,H)$ is an A/QI triple.  Then $H$ has finite height.
\end{proposition}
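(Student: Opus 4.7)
The plan is to exhibit, for any collection of distinct cosets $\{g_1H, \ldots, g_nH\}$ with infinite common intersection $K := \bigcap_{i=1}^n g_i H g_i^{-1}$, an unbounded subset of $\bigcap_i \mathcal{N}_{\varepsilon_0}(g_i H x)$ for some constant $\varepsilon_0$ depending only on $(G,X,H)$.  Definition~\ref{def:newacyl} then yields the uniform upper bound $n \le N(\varepsilon_0)$, which is the finite height. After conjugating the whole configuration by $g_1^{-1}$, we may assume $g_1 = 1$, so $K \leq H$.

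Because $H$ is finitely generated and quasi-isometrically embedded in the hyperbolic space $X$, its orbit $Hx$ is $\lambda$--quasi-convex in $X$, and consequently $H$ with its word metric is itself a hyperbolic group. Invoking the classical fact that every infinite subgroup of a hyperbolic group contains an element of infinite order, we find $k \in K$ of infinite order. Since $k$ has infinite order in the hyperbolic group $H$, the iterates $\{k^m\}_{m \in \mathbb{Z}}$ form a quasi-geodesic in $H$, hence by the quasi-isometric embedding $\{k^m x\}$ is a bi-infinite quasi-geodesic in $X$; in particular $k$ is loxodromic with some attracting endpoint $\xi_+ \in \partial X$.

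For each $i$, the relation $k \in g_i H g_i^{-1}$ gives $k^m g_i x = g_i (g_i^{-1} k g_i)^m x \in g_i H x$ for all $m$. Since $d_X(k^m x, k^m g_i x) = d_X(x, g_i x)$ is bounded independent of $m$, the sequence $\{k^m g_i x\}$ also converges to $\xi_+$, so $\xi_+ \in g_i \Lambda(H)$ for every $i$. Moreover, because the points $k^m g_i x$ all lie in the $\lambda$--quasi-convex set $g_i H x$, by choosing a quasi-geodesic ray $\gamma_i$ from $g_i x$ to $\xi_+$ that subtracks these points we see $\gamma_i$ is contained in a uniformly bounded neighborhood of $g_i H x$.

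Finally, fix a $\tau_0$--quasi-geodesic ray $\gamma$ from $x$ to $\xi_+$ (Remark~\ref{rem:visibleboundary}). Applying Theorem~\ref{thm:stability} (in its standard asymptotic form for rays sharing a boundary endpoint) to $\gamma$ and each $\gamma_i$, there is a constant $\varepsilon_0$ depending only on $\delta$, $\lambda$, and the quasi-isometry constants of $H$, and a time $T_i$, such that $\gamma(t) \in \mathcal{N}_{\varepsilon_0}(g_i Hx)$ for all $t \geq T_i$. Setting $T = \max_{i} T_i$, the tail $\gamma([T,\infty))$ lies in $\bigcap_{i=1}^n \mathcal{N}_{\varepsilon_0}(g_i Hx)$, which therefore has infinite diameter, so Definition~\ref{def:newacyl} with $\varepsilon = \varepsilon_0$ forces $n \le N(\varepsilon_0)$. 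The main subtlety is that the fellow-traveling time $T_i$ depends on the specific coset $g_i H$ (since $\gamma$ and $\gamma_i$ start at the possibly far apart points $x$ and $g_i x$), whereas the crucial constant $\varepsilon_0$ does not; finiteness of $\{g_1 H,\ldots,g_n H\}$ makes it safe to take the maximum of the $T_i$ at the end.
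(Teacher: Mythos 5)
Your proof is correct and follows essentially the same strategy as the paper: extract an infinite-order element of $\bigcap_i g_iHg_i^{-1}$, note it is loxodromic on $X$ with fixed point(s) in the limit set of each $g_iHx$, use quasi-convexity of the orbits to trap an unbounded quasi-geodesic (your ray to $\xi_+$; the paper's uniformly good bi-infinite quasi-axis) in a uniform neighborhood of every $g_iHx$, and then invoke the coset characterization of acylindricity (Theorem~\ref{thm:newacyl}/Definition~\ref{def:newacyl}) to bound the number of cosets by $N(\varepsilon_0)$. The only differences---normalizing $g_1=1$ and using asymptotic fellow-traveling of rays sharing one ideal endpoint instead of a $2\delta$--quasi-axis with both endpoints in the limit sets---are cosmetic.
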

\begin{proof}
  Choose $\delta \ge 1$ so that $X$ is $\delta$--hyperbolic.  Then it is not hard to show that every loxodromic isometry of $X$ has a quasi-axis which is a $2\delta$--quasi-geodesic.
  
  Fix $x_0\in X$ and $Q>0$ so that $Hx_0$ is $Q$--quasi-convex and so that any $2\delta$--quasi-geodesic with endpoints in $Hx_0\cup \Lambda(Hx_0)$ is contained in a $Q$--neighborhood of $Hx_0$.

  Since $G\acts X$ is acylindrical along $H$,  Theorem \ref{thm:newacyl} yields constants $D, N>0$ so that any collection of distinct cosets $\{g_1H,\dots, g_k H\}$ satisfying 
  \[
  \diam\left(\bigcap_{i=1}^k \mathcal N_Q(g_i Hx_0)\right)>D
\] 
has cardinality at most $N$.

  Suppose that $\{g_iH\}_{i\in I}$ is a collection of distinct cosets of $G$ so that the corresponding intersection of conjugates $K=\bigcap_{i\in I}g_iHg_i^{-1}$ is infinite.  We show $H$ has finite height by giving a bound on the cardinality of $I$ in terms of the quantities already specified.

  The intersection $K$ is an infinite subgroup of the hyperbolic group $H$, so it contains an infinite order element $w$ (see~\cite[Ch. 8, Cor. 36]{GdlH}).  Since $H$ is quasi-isometrically embedded by the action, $w$ acts loxodromically on $X$. Replacing $w$ by a power if necessary,
 $w$ has a $2\delta$--quasi-geodesic axis $\gamma$ with endpoints in $\Lambda(g_iH)$ for each $i$.

 By our assumptions on $Q$, the axis $\gamma$ is contained in the $Q$--neighborhood of $g_iHx_0$ for each $i$.  In particular, this implies that 
 \[
\diam\left( \bigcap_{i\in I} \mathcal N_{Q}(g_iHx_0)\right)>D.
 \]
By acylindricity, we therefore have $\#I\leq N$.
\end{proof}

\section{Stability}\label{sec:stability}
In this section we prove Theorem \ref{thm:stable}.  We first recall the definition of a \emph{stable subgroup} and then restate the theorem for convenience.

\begin{definition}\label{def:stable}
  Let $G$ be a finitely generated group.  A finitely generated subgroup $H$  of $G$ is \emph{stable} if it is undistorted and for some (equivalently, any) finite generating set $S$ of $G$ and for every $\tau \ge 1$, there exists $L=L(S,\tau)$ such that whenever $\gamma_1,\gamma_2$ are $\tau$--quasi-geodesics in $\Cay(G,S)$ with the same endpoints in $H$, the Hausdorff distance between $\gamma_1$ and $\gamma_2$ is at most $L$.
\end{definition}

\stable*
Known examples of stable subgroups include convex cocompact subgroups of mapping class groups \cite{DurhamTaylor15} and $\OutFn$ \cite{AougabDurhamTaylor}, cyclic subgroups generated by a loxodromic WPD element in an acylindrically hyperbolic group \cite{Sisto16}, subgroups of relatively hyperbolic groups which are quasi-isometrically embedded into the coned-off Cayley graph by the orbit map \cite{AougabDurhamTaylor}, and subgroups of hierarchically hyperbolic groups which are quasi-isometrically embedded into the top-level curve by the orbit map \cite{ABD}.

\begin{definition}\label{def:sqc}
Let $G$ be a finitely generated group.  A finitely generated subgroup $H$ of $G$ is \emph{strongly quasi-convex} if for some (equivalently, any) finite generating set $S$ of $G$ and for every $\tau\ge 1$, there exists $L=L(S,\tau)$ such that  every $\tau$--quasi-geodesic in $\Cay(G,S)$ with endpoints in $H$ is contained in the $L$--neighborhood of $H$.
\end{definition}

We will use the following observation of Tran to prove stability.
\begin{theorem} [{\cite[Theorem~4.8]{Tran}}] \label{obs:tran} A subgroup of a finitely generated group is stable if and only if it is hyperbolic and strongly quasi-convex in $G$.
\end{theorem}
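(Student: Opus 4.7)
The plan is to prove the two implications of the equivalence separately. Throughout, fix finite generating sets for $G$ and $H$ so each carries a word metric. For the forward direction, assume $H$ is stable, hence by definition undistorted with some quasi-isometry constant $\lambda$. To obtain strong quasi-convexity, let $\gamma$ be a $\tau$-quasi-geodesic in $G$ with endpoints $h_1, h_2 \in H$, and let $\sigma$ be a geodesic in $H$ from $h_1$ to $h_2$, regarded as a path in $G$; by undistortion, $\sigma$ is a $\tau_0$-quasi-geodesic in $G$, so stability bounds $d_G^{\mathrm{Haus}}(\gamma, \sigma) \le L$, and since $\sigma \subset H$ we conclude $\gamma \subset \mathcal{N}_L(H)$. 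For hyperbolicity of $H$, I would invoke Bonk's characterization that a geodesic metric space is hyperbolic iff geodesics satisfy a uniform Morse property. Any two $\tau$-quasi-geodesics in $H$ with common endpoints, when viewed in $G$ via undistortion, become $\tau_1$-quasi-geodesics with endpoints in $H$; stability bounds their $d_G$-Hausdorff distance, and undistortion upgrades this to a $d_H$-Hausdorff bound, establishing the Morse property inside $H$.

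For the backward direction, assume $H$ is hyperbolic and strongly quasi-convex. First, strong quasi-convexity forces undistortion: the $G$-geodesic between two points of $H$ lies in $\mathcal{N}_L(H)$, and sampling nearest $H$-points along this geodesic produces a path in $H$ whose $H$-length is linearly comparable to the $G$-distance. Now let $\gamma_1, \gamma_2$ be $\tau$-quasi-geodesics in $G$ with endpoints in $H$; strong quasi-convexity places both in $\mathcal{N}_L(H)$. Choose a coarse nearest-point projection $\pi\colon \mathcal{N}_L(H) \to H$ and set $\gamma_i' = \pi \circ \gamma_i$. The key technical step is that each $\gamma_i'$ is a $\tau'$-quasi-geodesic in $H$ with $\tau'$ depending only on $\tau, L$, and the undistortion constants. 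Once this is established, the Morse lemma inside the hyperbolic space $H$ bounds $d_H^{\mathrm{Haus}}(\gamma_1', \gamma_2') \le L'$; since $d_G \le C \cdot d_H$ on $H$ and each $\gamma_i$ stays within $L$ of $\gamma_i'$, we obtain a bound on $d_G^{\mathrm{Haus}}(\gamma_1, \gamma_2)$, yielding stability.

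The main obstacle is the projection step in the backward direction. The upper Lipschitz bound on $\pi \circ \gamma_i$ follows readily from coarse Lipschitzness of $\pi$ together with undistortion of $H$. The lower quasi-geodesic bound, namely $d_H(\gamma_i'(s), \gamma_i'(t)) \ge (\tau')^{-1}|s-t| - \tau'$, is more delicate: if it failed, one could concatenate a short $H$-path between $\gamma_i'(s)$ and $\gamma_i'(t)$ with bounded-length correction paths to $\gamma_i(s)$ and $\gamma_i(t)$, producing a $G$-path from $\gamma_i(s)$ to $\gamma_i(t)$ much shorter than $|s-t|/\tau - \tau$, contradicting that $\gamma_i$ is a $\tau$-quasi-geodesic. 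Handling the constants carefully, and ensuring that the various concatenations are themselves uniform quasi-geodesics to which strong quasi-convexity can legitimately be applied, is the technical heart of Tran's argument.
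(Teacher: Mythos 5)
The paper does not prove this statement; it is imported verbatim from Tran \cite[Theorem~4.8]{Tran}, so there is no internal proof to compare against, and your outline should be measured against Tran's own argument --- which it in fact reproduces quite faithfully, and correctly. The forward direction is fine: undistortion makes an $H$--geodesic a uniform quasi-geodesic of $G$ with endpoints in $H$, stability then traps any other quasi-geodesic in a neighborhood of $H$, and the converse of the Morse lemma (Bonk's theorem, or equivalently Papasoglu's thin-bigon criterion applied to a Cayley graph of $H$) yields hyperbolicity once the Morse property has been transported into $(H,d_H)$ via undistortion. Two points in the backward direction deserve tightening. First, in deducing undistortion from strong quasi-convexity, the passage from bounded $d_G$--gaps between consecutive sampled points to bounded $d_H$--gaps is exactly what is at stake; the argument avoids circularity only because $H\cap B^G_{2L+1}(1)$ is a \emph{finite} set, so its elements have uniformly bounded $H$--length, and this finiteness should be stated explicitly. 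Second, the lower quasi-geodesic bound for $\pi\circ\gamma_i$, which you flag as the delicate technical heart, is in fact immediate: the inclusion $(H,d_H)\hookrightarrow(G,d_G)$ is $C$--Lipschitz for some $C$, so $d_H(\pi\gamma_i(s),\pi\gamma_i(t))\ge C^{-1}\bigl(d_G(\gamma_i(s),\gamma_i(t))-2L\bigr)\ge C^{-1}\bigl(\tau^{-1}|s-t|-\tau-2L\bigr)$, with no contradiction argument or concatenation of paths required. With those two steps made explicit, the proposal is a complete and correct proof along the same lines as the cited source.
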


In \cite{Sisto16}, Sisto investigates quasi-convexity properties of hyperbolically embedded subgroups of acylindrically hyperbolic groups and shows that they are strongly quasi-convex.  The arguments in our proof of Theorem \ref{thm:stable} are based on those of Sisto.  However, we work in a slightly different setting.  
Sisto works under the assumption that $X$ is a Cayley graph of $G$ with respect to an infinite generating set.  By a Milnor--Schwartz argument, if $G$ acts coboundedly on $X$, then this is equivalent to our situation where $X$ is a metric space.  However, we do not assume that the action of $G$ on $X$ is cobounded.  Also, Sisto does not assume that $X$ is a hyperbolic metric space, rather assuming that $X$ is hyperbolic relative to the translates of $\pi(H)$.  This assumption is neither weaker nor stronger than ours---our space $X$ is assumed to be hyperbolic,  but might not be hyperbolic \emph{relative} to the translates of $\pi(H)$, since these could have unbounded coarse intersection.

\subsection{An axiomatic approach to Sisto's proof}
Though Sisto's setting differs from ours, there are a few features in common, and these are all which are needed to execute Sisto's strategy.  For this subsection we fix a geodesic space $X$, an isometric action $G\acts X$, and a subgroup $H<G$, so that both $G$ and $H$ are finitely generated.  We fix a basepoint $x_0\in X$, and let $\pi\from G\to X$ be the orbit map $\pi(g) = gx_0$.  The following axioms will be assumed for the rest of the subsection.
\begin{axiomlist}
  \item\label{ax:qie} The restriction of $\pi$ to $H$ is a quasi-isometric embedding.
  \item\label{ax:cl} There is a nearest-point projection map $\rho\from X\to \pi(H)$ satisfying the following:  For all $a_0>0$, there is an $a>0$ so that if $d_X(x,y)\le a_0$, then $d_X(\rho(x),\rho(y))\le a$.
  \item\label{ax:contracting}
    There is a constant $C>0$ so that
    $ d_X(\rho(x),\rho(y))\ge C $ implies \[ d_X(x,y)\ge \max\{ d_X(x,\pi(H)), d_X(y,\pi(H))\}.\]
  \item\label{ax:wa} $G\acts X$ is weakly acylindrical along $\pi(H)$.
\end{axiomlist}

\begin{remark}\label{rem:axiomshold}
  For Sisto, $X$ is a Cayley graph of $G$ which is not necessarily hyperbolic, but is hyperbolic \emph{relative to} a hyperbolically embedded family $\mc{H}$ including the subgroup $H$.
  
  Dahmani--Guirardel--Osin show (see the argument for \cite[Theorem 4.31]{DGO}) that in this case $H$ is a Lipschitz quasi-retract of $X$; in particular $H$ is quasi-isometrically embedded in $X$ (Axiom~\ref{ax:qie}). 
  Also in Sisto's setting, Axioms~\ref{ax:cl} and~\ref{ax:contracting} follow from~\cite[Lemma 2.3, parts (2) and (3)]{Sisto16}.  Axiom~\ref{ax:wa} is~\cite[Lemma 3.2]{Sisto16}.

  In our setting of an A/QI triple $(G,X,H)$, $H$ is not necessarily part of a hyperbolically embedded collection, but rather the axioms~\ref{ax:qie} and~\ref{ax:wa} are part of the definition.  Axioms~\ref{ax:cl} and~\ref{ax:contracting} are straightforward exercises in Gromov hyperbolic geometry, and use only that $\pi(H)$ is quasi-convex in the hyperbolic space $X$.
\end{remark}

Since $H$ and $G$ are finitely generated, we may fix finite generating sets $S\subset T$ of $H$ and $G$ respectively, and  consider the Cayley graph $\Cay(H,S)$ as a subset of the Cayley graph $\Cay(G,T)$ in the natural way.   We  use $d_G$ and $d_X$ to denote the metrics on $\Cay(G,T)$ and $X$, respectively, and $\ell_G(\alpha)$ and $\ell_X(\beta)$ to denote the length of a path $\alpha$ in $G$ (more precisely, in $\Cay(G,T)$) and a path $\beta$ in $X$, respectively.

\medskip

\putinbox{For the rest of the section we fix a constant $\kappa\ge 1$ so that $d_X(x_0,gx_0)\le \kappa$ for every $g\in T$.}

\medskip

We  use $B^X_r(x)$ to denote the ball in $X$ about $x\in X$ with radius $r$ (that is, $r$ is measured with respect to $d_X$).  We use $N^G_K(Y)$ and $N^X_K(Z)$ to denote the closed $K$--neighborhood in $G$ of $Y\subset \Cay(G,T)$ and the closed $K$--neighborhood in $X$ of $Z\subset X$, respectively.

\begin{definition}
  Two subsets $A,B$ of a metric space $Y$ are said to be \emph{geometrically separated} if, for all $D>0$, the diameter of $N_D(A)\cap B$ is finite. 
\end{definition}

The first lemma is \cite[Lemma~3.3]{Sisto16}, slightly corrected.   It shows that preimages of balls of $X$ which are far apart are geometrically separated in $\Gamma(G,T)$, and only uses Axiom~\ref{ax:wa}.

\begin{lemma}[{\cite[Lemma~3.3]{Sisto16}}]\label{lem:sisto3.3}
  Let $G$ act on the geodesic space $X$, weakly acylindrically along the subspace $Y$.  Let $x_0\in X$, and let $\pi$ denote the orbit map $g\mapsto gx_0$.  For each $r\ge 0$, there is an $R_1\ge 0$ so that if $x,y\in Y$ satisfy $d_X(x,y)\ge R_1$, then $\pi^{-1}(B_r^X(x))$ and $\pi^{-1}(B_r^X(y))$ are geometrically separated.
\end{lemma}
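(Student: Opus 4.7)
My plan is to combine a pigeonhole argument with weak acylindricity. The crucial trick is to arrange, via a choice of ``defect class,'' for the element $g_1g_2^{-1}$ to have a second algebraic expression $a(g_1)a(g_2)^{-1}$, so that it simultaneously coarsely stabilizes both $x$ and $y$ with displacements depending only on $r$, not on $D$.

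\textbf{Setup.} Fix $r \geq 0$, and set $R_1 := R(2r, Y)$, the weak acylindricity constant for $\varepsilon = 2r$. Suppose $x,y\in Y$ with $d_X(x,y) \geq R_1$, and fix $D > 0$. Since $G$ is finitely generated (this is the standing hypothesis of the surrounding theorem), the ball $F_D = \{g\in G \mid |g|_G \leq D\}$ is finite. For each $g \in N^G_D(A)\cap B$, choose some $a(g) \in A$ with $d_G(g, a(g)) \leq D$, and set $u(g) := g^{-1}a(g)$, so that $u(g) \in F_D$. This partitions $N^G_D(A)\cap B$ into finitely many classes $S_u := \{g \mid u(g) = u\}$ indexed by $u\in F_D$.

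\textbf{Key step: intra-class bound.} If $u(g_1) = u(g_2) = u$, then $a(g_i) = g_i u$ gives the algebraic identity
\[
\gamma := g_1 g_2^{-1} = a(g_1)a(g_2)^{-1}.
\]
Read as the first expression, $\gamma$ sends $g_2 x_0 \in B_r(y)$ to $g_1 x_0 \in B_r(y)$, so $d_X(\gamma y, y) \leq 2r$. Read as the second, $\gamma$ sends $a(g_2) x_0 \in B_r(x)$ to $a(g_1) x_0 \in B_r(x)$, so $d_X(\gamma x, x) \leq 2r$. Since $d_X(x,y) \geq R_1 = R(2r, Y)$, weak acylindricity forces $\gamma$ to lie in a finite subset of $G$ that depends only on $r$. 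Hence $\mathrm{diam}_G(S_u) \leq K(r)$ for some constant $K(r)$ independent of $D$.

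\textbf{Inter-class bound and conclusion.} For $g_i \in S_{u_i}$ with $u_1 \neq u_2$, the triangle inequality gives $d_G(g_1, g_2) \leq 2D + d_G(a(g_1), a(g_2))$, where $a(g_i) \in A \cap N^G_D(B)$. Applying the entire pigeonhole argument symmetrically, with the roles of $A$ and $B$ swapped, bounds the diameter of $A \cap N^G_D(B)$ by some $L(D, r) < \infty$. Hence $\mathrm{diam}_G(N^G_D(A) \cap B) \leq 2K(r) + 2D + L(D,r) < \infty$, which is precisely geometric separation.

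\textbf{Main obstacle.} The delicate point is keeping $R_1$ independent of $D$. A direct application of weak acylindricity to $\gamma = g_1 g_2^{-1}$ for arbitrary pairs yields $d_X(\gamma x, x) \leq 2r + M(D)$, where $M(D) := \max_{w \in F_D^{-1}F_D} d_X(wx_0, x_0)$ grows with $D$; this would force $R_1 \geq R(2r + M(D), Y)$, which diverges as $D\to\infty$. The pigeonhole on $u(g)$ is engineered precisely to make the $u_i$'s cancel in $g_1g_2^{-1} = a(g_1)a(g_2)^{-1}$, so that $\varepsilon = 2r$ suffices. The inter-class portion only affects the size of the diameter bound (which may depend on $D$), not the choice of $R_1$.
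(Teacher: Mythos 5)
Your central device---choosing the defect $u(g)=g^{-1}a(g)\in F_D$ so that it cancels in $\gamma=g_1g_2^{-1}=a(g_1)a(g_2)^{-1}$, giving $d_X(\gamma x,x)\le 2r$ and $d_X(\gamma y,y)\le 2r$ and hence allowing $R_1$ to be a weak acylindricity constant for $\varepsilon=2r$, independent of $D$---is exactly the mechanism of the paper's proof: there one supposes $N^G_D(A)\cap B$ is unbounded, extracts infinitely many pairs $(a_i,b_i)\in A\times B$ with $a_i^{-1}b_i$ equal to a single element $g$ of the finite ball $B_D^G(1)$, and derives a contradiction from the finiteness of $\{a\in A\mid ag\in B\}$, which is proved by precisely your $2r$--displacement computation applied to $aa_0^{-1}$. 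However, two steps of your assembly do not hold as written. First, the intra-class bound $\diam_G(S_u)\le K(r)$ with $K(r)$ independent of $D$ is unjustified: weak acylindricity only says that $\gamma$ ranges over a finite set $\Phi$ (which depends on $x$ and $y$, not just on $r$), and knowing $g_1g_2^{-1}\in\Phi$ does not bound $d_G(g_1,g_2)=|g_1^{-1}g_2|_T$, since $g_1^{-1}g_2=g_2^{-1}\gamma^{-1}g_2$ is a conjugate of $\gamma^{-1}$ and word length is not controlled under conjugation. Second, the inter-class step is circular: it bounds $\diam_G(N^G_D(A)\cap B)$ in terms of $\diam_G(A\cap N^G_D(B))$, which you propose to bound ``by the same argument with $A$ and $B$ swapped''; but that symmetric argument has its own inter-class step, which requires a bound on $\diam_G(B\cap N^G_D(A))$---the quantity you started with.

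Both defects are repairable with ingredients you already have, and the repair makes your proof coincide in substance with the paper's. What your key step really shows is that each class $S_u$ is \emph{finite}: fixing $g_0\in S_u$, the map $g\mapsto gg_0^{-1}$ injects $S_u$ into the finite set $\Phi=\{\gamma\in G\mid d_X(\gamma x,x)\le 2r,\ d_X(\gamma y,y)\le 2r\}$. Since $F_D$ is finite, $N^G_D(A)\cap B=\bigcup_{u\in F_D}S_u$ is a finite set and therefore has finite diameter; geometric separation only requires finiteness of this diameter for each fixed $D$ (the bound may depend on $D$), so no inter-class estimate and no uniformity in $D$ are needed. (Alternatively, a finite union of nonempty sets of finite diameter has finite diameter, so even granting only finiteness of each $\diam_G(S_u)$ the inter-class paragraph is superfluous.) Finally, note that finite generation of $G$---equivalently, finiteness of $F_D$ and of $B_D^G(1)$---is genuinely used and is in force where the lemma is applied (Section~\ref{sec:stability} fixes the finite generating set $T$); the paper's proof relies on it in the same way.
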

\begin{proof}
  Let $r\ge 0$.  By Axiom~\ref{ax:wa}, $G$ acts weakly acylindrically along $Y$.   In particular there is some $R_1$ so that for any $x,y\in Y$ with $d_X(x,y)\ge R_1$ we have
  \begin{equation*}
    \#\{g\mid d_X(gx,x),d_X(gy,y)\le 2r\}<\infty.
  \end{equation*}
    We therefore fix some $x,y\in Y$ so that $d_X(x,y)\ge R_1$, and let $A = \pi^{-1}(B_r^X(x))$ and $B = \pi^{-1}(B_r^X(y))$.
  \begin{claim*}
    For a fixed $g\in G$, 
    $\#\{a\in A\mid ag\in B\}<\infty$.
  \end{claim*}

  Given the claim, we argue by contradiction.  Suppose for some $D>0$, the intersection $N_D^G(A)\cap B$ is unbounded.  There are thus infinitely many pairs of vertices $(a_i,b_i)$ with $a_i\in A$, $b_i\in B$, and $d_G(a_i,b_i)\le D$.  Thus all the elements $a_i^{-1}b_i$ lie in $B_D^G(1)$, which is finite.  Passing to a subsequence, the difference $a_i^{-1}b_i$ is a constant element $g$.  For this $g$, the set $\{a\in A\mid ag\in B\}$ is infinite, contradicting the claim.

  \begin{proof}[Proof of Claim]
    Let $\Omega = \{a\in A\mid ag \in B\}$, and fix some $a_0\in \Omega$.  We want to show $\Omega$ is finite.

    For any $a\in \Omega$, we have \[d_X(x,aa_0^{-1}x)\le d_X(x,ax_0)+d_X(ax_0,aa_0^{-1}x) = d_X(x,ax_0)+d_X(a_0x_0,x)\le 2r.\]  Similarly we have \[d_X(y,aa_0^{-1}y)\le d_X(y,agx_0)+d_X(agx_0,aa_0^{-1}y) = d_X(y,agx_0)+d_X(a_0gx_0,y)
    \le  2r.\]  Since $d_X(x,y)\ge R_1 $ and $x,y\in Y$, the number of possibilities for $aa_0^{-1}$ (and hence for $a$) is finite.
  \end{proof}
\end{proof}

The next lemma uses both Axiom~\ref{ax:wa} and Axiom~\ref{ax:qie} (in the form of the fact that $\pi|_H$ is proper).
\begin{lemma}[{\cite[Lemma~3.4]{Sisto16}}]\label{lem:geomsep}
    
    For each $r\ge 0$, there is an $R>0$ and a function $B\from \bR^+\to \bR^+$ so that for any $x,y\in H$ with $d_X(\pi(x),\pi(y))\ge R$ we have, for any $D\ge 0$,
    \begin{equation*}
      N^G_D(\pi^{-1}(B^X_r(\pi(x))))\cap \pi^{-1}(B^X_r(\pi(y)))\subseteq N^G_{B(D)}(H).
    \end{equation*}
  \end{lemma}

\begin{proof}
  
  We note that the orbit map $\pi\from G\to X$ is $\kappa$--Lipschitz.
  Fix some $r\ge 0$.  By equivariance it suffices to consider $x = 1$, $y = h\in H$.
  
  By Lemma~\ref{lem:sisto3.3}, there is an $R=R_1(2r)\ge 0$ so that if $h\in H$ satisfies $d_X(x_0,\pi(h))\ge R$, then the sets $\pi^{-1}(B_{2r}^X(x_0))$ and  $\pi^{-1}(B_{2r}^X(\pi(h)))$ are geometrically separated.

  Now let $D>0$.  If $d_X(x_0,\pi(h))>\kappa D+2r$, then the sets $N_D^G(\pi^{-1}(B_{r}^X(x_0)))$ and $\pi^{-1}(B_{r}^X(\pi(h)))$ are disjoint.  Since $\pi|_H$ is proper (Axiom~\ref{ax:qie}), the set \[\Psi = \{h\in H\mid R\le d_X(x_0,\pi(h))\le \kappa D + 2r\}\] is finite.  For each $h\in\Psi$, Lemma~\ref{lem:sisto3.3} tells us that the diameter
  \[ d(h) = \diam N_D^G(\pi^{-1}(B_{2r}^X(x_0)))\cap \pi^{-1}(B_{2r}^X(\pi(h))) \] is finite.  We let $B(D)= \max\{d(h)\mid h\in \Psi\}$.
\end{proof}

Let $\alpha$ be a tame $\tau$--quasi-geodesic edge-path in the Cayley graph of $G$ with respect to $T$.  The path $\alpha$ passes through a sequence of group elements $a_1,\ldots,a_n$.  By $\pi(\alpha)$, we mean the discrete path $\pi(a_1),\ldots,\pi(a_n)$.  Given a discrete path $\gamma=x_0,x_1,\dots,x_n$ in $X$, recall that we define the length of $\gamma$ to be 
\[\ell^0(\gamma):=\sum_{i=0}^{n-1}d_X(x_i,x_{i+1}).\]
If all the summands $d_X(x_i,x_{i+1})$ are at most $r$, we say that $\gamma$ is an \emph{$r$--coarse path}.  Since $\kappa = \max\{d_X(x_0,tx_0)\mid t\in T\}$, the discrete path $\pi(\alpha)$ is a $\kappa$--coarse path.  In particular we have
\begin{equation}\label{eq:lengthbound}
  \ell_G(\alpha)\geq \ell^0(\pi(\alpha))/\kappa.
\end{equation}

 By choosing geodesics in $X$ between $x_0$ and $sx_0$ for each $s\in S$, we extend the map $\pi|_H$ to an $H$--equivariant map 
\[ \hat\pi\from \Cay(H,S)\to X,\]
The image $\Gamma = \hat\pi(\Cay(H,S))$ is an $H$--cocompact, embedded,  connected subset which contains the discrete set $\pi(H)$.
Fix $\lambda\ge 1$  so that $\hat\pi$ is a $\lambda$--quasi-isometric embedding.

\medskip
\putinbox{The map $\hat\pi$ and the constant $\lambda$ will be fixed for the rest of the section.}
\medskip

For $h\in H$ and $0\leq r_1\leq r_2$, we define the following subspace of $\Gamma$, which we refer to as an \emph{annulus}:  \[A(h,r_1,r_2)=\Gamma\cap \left(B^X_{r_2}(hx_0)\setminus\mathring B^X_{r_1}(hx_0)\right).\]  Since $\Gamma$ is connected, contains $x_0$, and has infinite diameter, $A(h,r_1,r_2)$ is always non-empty.

  \begin{proposition}[{\cite[Proposition~4.1]{Sisto16}}] \label{claim:stable}
    For any $L\geq 0$ there exists a  $K\geq 0$ such that the following holds.  For any $h,h'\in H$ with $d_G(h,h')>K$ and any $0<r_1<r_2<d_X(hx_0,h'x_0)$, if $\alpha$ is a path in $G$ from $h$ to $h'$ and $\alpha\cap N^G_K(\hat\pi^{-1}(A(h,r_1,r_2)))=\emptyset$, then $\ell_G(\alpha)\geq L(r_2-r_1)$. 
\end{proposition}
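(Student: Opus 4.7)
The plan is to project $\alpha$ to a ``height function'' on the orbit and show it cannot cross $[r_1,r_2]$ cheaply. Set $\beta(i)=\rho(\pi(a_i))$ for the nearest-point projection of the orbit image into $Hx_0$, and $\sigma(i):=d_X(hx_0,\beta(i))$. Then $\sigma(0)=0$, $\sigma(n)>r_2$, and consecutive values differ by at most $\varkappa+\nu$ in general; when $\pi(a_i)$ lies farther than a threshold $D_0>3\varkappa/2$ from $Hx_0$, Lemma~\ref{lem:contracting} forces the difference to be strictly less than the contraction constant $C$.

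Given $L$, I would fix constants in the order $D_0$, then $R_1$ and $B\colon\mathbb{R}^+\to\mathbb{R}^+$ from Lemma~\ref{lem:geomsep} with $r=D_0$, then a partition scale $R$ much larger than $\max(R_1,C,\varkappa+\nu)$, then $D$ proportional to $LR$, and finally $K$ large enough that $K>B(D)+\lambda(MB(D)+D_0+1)$ (so geometric separation contradicts the hypothesis on $\alpha$) and also $K\ge L\cdot O(R)$, so that the trivial bound $\ell_G(\alpha)\ge d_G(h,h')>K$ already handles the regime $r_2-r_1=O(R)$.

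For the main case, partition $[r_1,r_2]$ into well-separated intervals $J_0,\ldots,J_{m-1}$ of width $R$ with $m=\Theta((r_2-r_1)/R)$, and let $I_n=\{i\colon d_X(\pi(a_i),Hx_0)\le D_0\}$. For each $J_k$ there is a dichotomy: either (i) some $i_k\in I_n$ has $\sigma(i_k)\in J_k$, or (ii) $\alpha$ traverses $J_k$ entirely within the complement of $I_n$. In case (i), the points $\gamma(i_k):=\hat\pi^{-1}(\beta(i_k))\in\hat\pi^{-1}(A)$ satisfy $d_X(\gamma(i_k)x_0,\gamma(i_{k'})x_0)>R_1$ for consecutive $k<k'$ by the separation of the $J_k$'s; Lemma~\ref{lem:geomsep}, combined with the hypothesis $d_G(a_i,\hat\pi^{-1}(A))>K$ and the quasi-isometric embedding $\hat\pi$ (to pass from ``close to $H$ in $G$'' to ``close to $\gamma(i_{k'})$ in $G$''), forces $d_G(a_{i_k},a_{i_{k'}})>D$, so $\ell_G(\alpha)$ is at least of order $|P|\cdot D$ where $P$ indexes the case (i) intervals. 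In case (ii), Lemma~\ref{lem:contracting} ensures $\beta$ moves less than $C$ per step, so crossing $J_k$ costs at least $R/C$ steps. Since $|P|+|Q|=m$ and $D$ is of order $LR$, the two contributions together yield $\ell_G(\alpha)\ge L(r_2-r_1)$ up to constants absorbed into $K$.

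The hard part will be ensuring the case (ii) bound scales correctly with $L$, since naively it only gives order $(r_2-r_1)/C$. I expect this to be remedied by invoking Lemma~\ref{lem:geomsep} a second time, applied to the $I_n$-indices \emph{bracketing} each long excursion through $I_n^c$ across $J_k$: such a bracketing pair must be $\beta$-separated by at least $R>R_1$ (since $\sigma$ has to increase by $R$ across the missed interval), so geometric separation yields a $d_G$-gap of $D$ between them, upgrading the weak $R/C$ bound to the $L$-scaling bound $D$. Careful bookkeeping then ensures that the bracketing $\gamma$-values still lie in $\hat\pi^{-1}(A)$, except for a bounded number of excursions that overshoot $r_2$, whose contribution is absorbed into the choice of $K$.
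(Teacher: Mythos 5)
Your overall skeleton matches the proof in the paper (which follows Sisto): slice the annulus into height intervals, and run a dichotomy in which points of $\alpha$ that are $X$--close to the orbit at well-separated heights are forced $d_G$--far apart by Lemma~\ref{lem:geomsep} together with the hypothesis that $\alpha$ avoids $N^G_K(\hat\pi^{-1}(A))$, while portions travelling far from $Hx_0$ are charged via the contraction Lemma~\ref{lem:contracting}. Your case (i) is essentially the paper's second claim and is fine modulo bookkeeping (choosing the points so the subpaths are disjoint, and checking that the nearby element of $H$ produced by Lemma~\ref{lem:geomsep} really has height inside the annulus, which forces you to discard the outermost intervals rather than ``absorb them into $K$'').

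The genuine gap is your case (ii), and the repair you sketch does not close it. First, the bracketing pair of $I_n$--indices around an excursion crossing $J_k$ need not be $\sigma$--separated by $R$: the excursion can rise across $J_k$ and return near the orbit at a height just below where it left, so Lemma~\ref{lem:geomsep} gives nothing for that pair. Second, and more fundamentally, an interval $J_k$ can be crossed entirely at distance just over your fixed threshold $D_0$ from $Hx_0$, with no point of $\alpha$ within $D_0$ of the orbit having height in $J_k$ at all; then geometric separation is unavailable (it needs points whose $\pi$--images are close to the orbit), and Lemma~\ref{lem:contracting} only yields $G$--length about $RD_0/C$ per crossing, a bound independent of $L$. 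If a definite fraction of the intervals are of this type you get only $\ell_G(\alpha)\gtrsim (r_2-r_1)D_0/C$, which is not $\geq L(r_2-r_1)$ for large $L$, and re-applying Lemma~\ref{lem:geomsep} to points that are not near the orbit cannot help. The missing idea --- and the way the proof in the paper proceeds --- is to make the near/far threshold itself scale with $L$: one takes sphere-neighborhoods of radius $C_2\geq 4000LD\kappa$ (with $D=C+2a$ the contraction scale) and applies Lemma~\ref{lem:geomsep} with $r=C_2$. A crossing of a \emph{missed} sphere-neighborhood then forces the shadow to advance by about $2C_2$ in increments of at most $D$, each increment costing at least $C_2/2$ by Lemma~\ref{lem:contracting}, so each such crossing costs roughly $C_2^2/D\geq 1000L\kappa C_2$ in $X$; since only about $(r_2-r_1)/C_2$ such crossings are needed, this case alone gives $\ell_G(\alpha)\geq L(r_2-r_1)$. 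In short, the far-from-orbit cost is quadratic in the threshold, so the threshold (not only the $d_G$--gap $D\sim LR$ in your case (i)) must be chosen proportional to $L$; with $D_0$ fixed your estimate cannot scale with $L$.
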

\begin{proof}
  We may as well suppose $L \ge 1$.  Recall that $\kappa\ge 1$ is a Lipschitz constant for the orbit map $\pi$.  
  By equivariance, we may consider just paths $\alpha$ joining $1$ to $h$ for $h\in H$.  
  The map $\pi$ is only defined on group elements, so $\pi(\alpha)$ is a $\kappa$--coarse path in $X$.
  We will obtain our lower bound on $\ell_G(\alpha)$ by analyzing how $\pi(\alpha)$ interacts with $X$--neighborhoods of regularly spaced sub-annuli of $A(h,r_1,r_2)$.  If it misses enough of these, then $\pi(\alpha)$ is already quite long, and thus so is $\alpha$.  If on the other hand it meets many such neighborhoods, we will see that it meets many pairs of such neighborhoods at controlled distances from each other.  When this happens we will be able to use Lemma~\ref{lem:geomsep} to bound $\ell_G(\alpha)$ from below.

Recall that $\rho\from X\to \pi(H)$ is a nearest point projection map.  By Axiom~\ref{ax:cl}, there is an $a$ so that $\rho$ takes every $\kappa$--coarse path in $X$ to an $a$--coarse path of points in $\pi(H)$.

We now fix several more constants.  Some of these will only be used much later in the proof.
Let $D_1=C+a$, where $C$ is the constant from Axiom~\ref{ax:contracting}.  
Let $C_2\geq 4000LD_1\kappa+a$.  
Also fix an integer $R\geq \kappa$ so that Lemma~\ref{lem:geomsep} holds with $r=C_2 + \kappa/2$.  We set $D_2 = 100C_2RL$, and let $B = B(D_2)$,  where $B(\cdot)$ is the function from Lemma~\ref{lem:geomsep}. 
Finally, let
\begin{equation*}
  B' = \lambda\left(\kappa B+ C_2 \right)+\lambda^2 
\end{equation*}
and let
\begin{equation*}
  K = \max\{1000 C_2 R L, B+B'\}.
\end{equation*}

We now assume that $h$ satisfies $d_G(1,h)>K$ and choose $r_1<r_2$ between $0$ and $d_X(x_0,hx_0)$.  Let $\alpha$ be a path in $\Cay(G,T)$ satisfying 
\[\alpha\cap N^G_K(\hat\pi^{-1}(A(1,r_1,r_2)))=\emptyset.\]
We must bound the length of $\alpha$ from below.

  There are two cases, depending on whether $r_2-r_1 \geq 1000 C_2R$ or not.  In the case $r_2-r_1 < 1000C_2R$, we argue as follows.
By assumption, $d_G(1,h)>K$, and so $\ell_G(\alpha)> K$.  Since $K\geq 1000C_2RL$, it follows that $\ell_G(\alpha)\geq 1000C_2RL> (r_2-r_1)L$, completing the proof in this case.

Now suppose $r_2-r_1\geq 1000C_2R$.

Let $n =\lfloor (r_2-r_1)/(10C_2)\rfloor$.  Let $r_1 = d_0,d_1,\ldots,d_n = r_2$ be evenly spaced real numbers, and notice that $d_{i+1}-d_i \ge 10 C_2$.  Consider the following annuli and neighborhoods of annuli:
\[A_i = A(1,d_i,d_i+C_2)\quad\mbox{and}\quad T_i=N^X_{C_2}(A_i).\] 

\setcounter{claim}{0}
\begin{claim}\label{cl:1}
Suppose the number of indices $i$ such that $\pi(\alpha)\cap T_i=\emptyset$ is greater than $(r_2-r_1)/(1000C_2)$.  Then $\ell_G(\alpha)\geq L(r_2-r_1)$.
\end{claim}
\begin{proof}[Proof of claim]
  At least $(r_2-r_i)/(1000 C_2)$ of the indices satisfying $\pi(\alpha)\cap T_i=\emptyset$ are strictly less than $n$.
  Fixing one such index $i$, we will find a subpath $\beta_i$ of $\pi(\alpha)$ with some definite length.  These will be disjoint subpaths and the sum of their lengths will give the desired lower bound on $\ell_G(\alpha)$.

  For a fixed $i$, let $x_i$ be the last point on $\pi(\alpha)$ so that $d_X(x_0, \rho(x_i))< d_i$, and let $y_i$ be the first point after $x_i$ on $\pi(\alpha)$ so that $d_X(x_0,\rho(y_i))> d_i + C_2$.  Note that such a $y_i$ always exists; otherwise we would have $d_X(x_0,hx_0) \le d_i + C_2$ which could only happen for $i=n$,  contradicting our assumption on $i$. 
  Let $\beta_i=\pi(\alpha)|_{[x_i,y_i]}$.  
  We will bound the length of this subpath from below.

\begin{subclaim*}
There are points $q_1,\dots, q_m$ on $\beta_i$ satisfying:
\begin{enumerate}[label=(\alph*),ref=(\alph*)]
\item\label{q:many} $m\geq C_2/D_1$
\item\label{q:spaced} $d_X(\rho(q_j),\rho(q_{j+1}))\ge C$
\item\label{q:nearsphere} $\rho(q_j)\in A_i$.
\end{enumerate}
\end{subclaim*}
\begin{proof}
   Since $\rho(\pi(\alpha))$ is an $a$--coarse path, the set of distances $\{d_X(x_0,\rho(q))\mid q\in \beta_i\}$ is $\frac{a}{2}$--dense in the interval $[d_X(x_0,\rho(x_i)),d_X(x_0,\rho(y_i))] \supset [d_i,d_i + C_2]$.
   In particular, there is some $q_1$ in $\beta_i$ so that $d_X(x_0,\rho(q_1))\in [d_i,d_i+a]$.  If we have chosen $q_j$, we can proceed inductively.  The set of distances $\{d_X(x_0,\rho(q))\mid q\in \beta_i|_{[q_j,y_i]}\}$ is $\frac{a}{2}$--dense in the interval $[d_X(x_0,\rho(q_j)),d_i+C_2]$.  As long as this interval contains $[d_X(x_0,\rho(q_j))+C, d_X(x_0,\rho(q_j)) + C+a]$, we can find $q_{j+1}\in \beta_i$ so $d_X(x_0,\rho(q_{j+1}))$ lies in that interval.  The triangle inequality ensures item~\ref{q:spaced} holds.  Since $d_X(x_0,\rho(q_{j+1}))\in [d_i,d_i + C_2]$, item~\ref{q:nearsphere} also holds for $q_{j+1}$.
   If we fail to find such a $q_{j+1}$ this can only be because $j+1> C_2/D_1$.  (Recall $D_1 = C+a$.)
\end{proof}   

It follows from~\ref{q:spaced} and Axiom~\ref{ax:contracting} that $d_X(q_j,q_{j+1})\geq d_X(q_j,\pi(H))$.  Since $q_j\not\in T_i$, \ref{q:nearsphere} implies that $d_X(q_j,\pi(H))\geq C_2/2$.  Combining this with \ref{q:many}, we conclude that \[\ell^0(\beta_i)\geq \sum_{j=1}^{m-1} d_X(q_j,q_{j+1})\geq \frac{C_2}{2}\left(\frac{C_2}{D_1}-1\right)\geq \frac{C_2^2}{4D_1}\geq 1000C_2L\kappa.\] 

Since $d_{i+1}-d_i \ge 10 C_2$, these subpaths $\beta_i$ must be disjoint for distinct $i$.
In particular, if the set of $i<n$ such that $\pi(\alpha)\cap T_i=\emptyset$ is at least $(r_2-r_1)/(1000C_2)$, then \[\ell^0(\pi(\alpha))\geq \frac{r_2-r_1}{1000C_2}\cdot 1000C_2L\kappa\geq L\kappa(r_2-r_1).\]
Applying~\eqref{eq:lengthbound} we obtain $\ell_G(\alpha)\geq L(r_2-r_1)$, as desired.
 \end{proof}

 Let $I_0\subset \{1,\ldots n\}$ be the set of indices so that $\pi(\alpha)\cap T_i\neq\emptyset$.  Claim~\ref{cl:1} implies that $\# I_0 \ge n-(r_2-r_1)/(1000C_2)\geq (1-2/100)n$.  Subdividing the integers modulo $R$ there must exist $0\leq s\leq R-1$ such that the residue class $s+R\bZ$ meets $I_0$ in a set of cardinality at least $(1-\frac{2}{100})\frac{n}{R}$.  A counting argument shows that there is a set $I\subset I_0\cap (s+R\bZ)$ of cardinality at least $(1-\frac{4}{100})\frac{n}{R}\geq \frac{r_2-r_1}{100C_2R}$ so that for every $i\in I$ we have both $\pi(\alpha)\cap T_i\neq \emptyset$ and $\pi(\alpha)\cap T_{i+R}\neq \emptyset$. 

  For each $i\in I\cup (R+I)$, choose a point $a_i\in \pi^{-1}(T_i)\cap \alpha$.  For distinct $i,i'\in I$ we note that the subpaths $\alpha|_{[a_i,a_{i+R}]}$ and $\alpha|_{[a_{i'},a_{i'+R}]}$ contribute separately to $\ell_G(\alpha)$.

\begin{claim}\label{cl:2}
  For each $i\in I$, we have $d_G(a_i,a_{i+R})\ge 100 C_2 R L$.
\end{claim}

Assuming Claim~\ref{cl:2}, as there are at least $(r_2-r_1)/(100C_2R)$ such pairs of points $a_j,a_{j+R}$ and $\ell_G(\alpha)\geq \sum_j d_G( a_j,a_{j+R})$, we  have $\ell_G(\alpha)\geq L(r_2-r_1)$, as desired.  Thus it remains to prove the claim.

\begin{proof}[Proof of claim]
  We assume for a contradiction that $d_G(a_i,a_{i+R}) < 100 C_2 R L$ for some $i\in I$.  We will show that this forces the path $\alpha$ to enter the $K$--neighborhood of $\hat\pi^{-1}(A(h,r_1,r_2))$, contradicting our basic assumption on $\alpha$.

  For $k\in\{i,i+R\}$, let $p_k = \pi(a_k)\in T_k$.
There is some point $b_k \in A_k$ so that $d_X(p_k,b_k)\le C_2$, and some $c_k\in \pi(H)$ with $d_X(b_k,c_k)\le \kappa/2$.  We have $d_X(p_k,c_k)\le C_2 + \kappa/2$.  Moreover
\begin{equation}
  \label{eq:cjdist}
  d_X(c_i,c_{i+R}) \ge 10 RC_2 - C_2 - \kappa > R,
\end{equation}
so we may apply Lemma~\ref{lem:geomsep} with $c_{i+R}$ and $c_i$ taking the place of  $\pi(x)$ and $\pi(y)$, respectively.
Accordingly, let $x\in H\cap \pi^{-1}(c_{i+R})$ and let $y\in H\cap \pi^{-1}(c_i)$.  We have $a_i\in \pi^{-1}(B_{C_2+\kappa/2}^X(\pi(y)))$, since $d_X(p_i,c_i)\le C_2 + \kappa/2$.  Recalling that $D_2 = 100C_2RL$, we also have
$a_i\in N_{D_2}(\pi^{-1}(B_{C_2 + \kappa/2}^X(\pi(x))))$ since we have assumed $d_G(a_i,a_{i+R}) < D_2$.  Now Lemma~\ref{lem:geomsep} implies that there is some $h''\in H$ within $B = B(D_2)$ of $a_i$. (In $X$ we have $d_X(\pi(h''),p_i) \le \kappa B$.)

Now we note that $b_i\in A_i \subset A(1,r_1,r_2)$.
Thus
\[ d_G(a_i,\hat\pi^{-1}(A(1,r_1,r_2)))\le d_G(a_i,\hat\pi^{-1}(b_i)).\]
We will see this is less than $K$.  Indeed
\begin{align*}
  d_G(a_i, \hat\pi^{-1}(b_i)) & \le d_G(a_i,h'') + d_G(h'',\hat\pi^{-1}(b_i))\\
  & \le B + d_G(h'',\hat\pi^{-1}(b_i)).
\end{align*}
Now
\begin{align*}
  d_X(\pi(h''),b_i)& \le d_X(\pi(h''),p_i) + d_X(p_i,b_i)\\
                   & \le \kappa B + C_2,
\end{align*}
and since $\hat\pi$ is a $\lambda$--quasi-isometric embedding, this implies
\[ d_G(h'',\hat\pi^{-1}(b_i))\le \lambda(\kappa B + C_2) + \lambda^2  = B' .\]
We thus have $d_G(a_i, \hat\pi^{-1}(b_i))\le B + B' = K$, as desired.  This contradiction proves the claim.
\end{proof}
\end{proof}

We now use Proposition~\ref{claim:stable} to prove strong quasi-convexity (Definition~\ref{def:sqc}).  By Tran's Theorem~\ref{obs:tran}, a subgroup is stable if and only if it is both hyperbolic and strongly quasi-convex.

\begin{theorem}\label{thm:axioms_imply_sqc}
  Let $G$ be a finitely generated group acting on a geodesic space $X$, and let $H<G$ be finitely generated.  If $(G,X,H)$ satisfy the axioms \ref{ax:qie}--\ref{ax:wa}, then $H$ is strongly quasi-convex in $G$.
\end{theorem}
\begin{proof}
  In order to show strong quasi-convexity it suffices to consider tame quasi-geodesics (see the discussion after Definition \ref{def:quasi-geodesic}); in a graph, these tame quasi-geodesics can be taken to be edge-paths.
  So we assume $\alpha$ is a tame $\tau$--quasi-geodesic edge-path in $\Cay(G, T)$ from  $h_1\in H$ to $h_2\in H$.  We will find a neighborhood of $H$ containing $\alpha$ and only depending on the value of $\tau$.  Recall that $\hat\pi$ is a $\lambda$--quasi-isometric embedding.

Fix $L=\lambda(\tau+1)$, and let $K$ be as in Proposition~\ref{claim:stable} for this choice of $L$.  Suppose that $\alpha$ is not contained in $N^G_{K}(H)$.  Let $x,y$ be vertices of  $\alpha$  so that the subpath $\alpha'$ of $\alpha$ from $x$ to $y$ intersects $N^G_K(H)$ only at its endpoints.  Since $\alpha$ is a tame $\tau$--quasi-geodesic, it follows that
\begin{equation}\label{eqn:qgeo}
  \ell_G(\alpha')\leq \tau d_G(x,y)+\tau.
\end{equation}
  We will bound $\ell_G(\alpha')$ by a constant $M$  depending only on $\lambda$ and $\tau$, which will imply that $\alpha\subset N^G_{K+M}(H)$, as desired.  Appealing to \eqref{eqn:qgeo}, it suffices to bound $d_G(x,y)$.  We claim:
  \begin{equation}
    \label{eq:dxybound}
    d_G(x,y) \le \max\left\{
      4\lambda^2K + 3\lambda^2 + 2K,\
      \tau + 4L\lambda K + 3 L \lambda + 2K + 2L\lambda^{-1}K
    \right\}.
  \end{equation}

  Consider the path $\beta'$ obtained by concatenating (in the appropriate order) $\alpha'$ with geodesics of length at most $K$ connecting $x$ and $y$ to points $h,h'\in H$, respectively.

  We divide into two cases, depending on whether or not
\begin{equation}
  \label{eq:dichot}
  d_G(h,h') > 4\lambda^2K + 3\lambda^2\,.
\end{equation}
If \eqref{eq:dichot} does \emph{not} hold, then since $x,y$ are distance at most $K$ from $h,h'$ respectively,
\begin{equation*}
  d_G(x,y) \le 4\lambda^2K + 3\lambda^2 + 2K\,,
\end{equation*}
establishing \eqref{eq:dxybound} in this case.

So we suppose that \eqref{eq:dichot} holds.
Set $r_1=2\lambda K + \lambda$
and $r_2=d_X(\hat \pi(h),\hat \pi(h'))-(2\lambda K + \lambda)$.
From \eqref{eq:dichot} and the fact that $\hat \pi$ is a $\lambda$--quasi-isometric embedding, we obtain $r_1<r_2$, so $A = A(h,r_1,r_2)$ is nonempty.
 Moreover, $r_1$ and $r_2$ are chosen so that $d_G(\{h,h'\},\hat\pi^{-1}(A))\ge 2K$.  In particular the initial and terminal segments of $\beta'$ connecting $\alpha'$ to $H$ lie outside $N_K^G(\hat\pi^{-1}(A))$.   
 Since $\hat\pi^{-1}(A)$ is contained entirely within $\Cay(H,S)$, the path $\alpha'$ also lies outside $N_K^G(\hat\pi^{-1}(A))$.  Thus we have
 \begin{equation*}\beta'\cap N_K^G(\hat\pi^{-1}(A(h,r_1,r_2)))=\emptyset\,.\end{equation*}

Now Proposition~\ref{claim:stable} yields $\ell_G(\beta') \geq L(r_1-r_2)$.  We have
\begin{align*}
  \ell_G(\alpha')& \geq L(r_2-r_1)-2K\\
                 & \geq L\left( d_X(\hat \pi(h),\hat \pi(h'))-(4\lambda K + 2 \lambda)\right)-2K\\
                 & \geq L (\lambda^{-1} d_G(h,h') - \lambda)- (4L\lambda K + 2 L \lambda + 2K)\\
                 & \geq (\tau+1) d_G(x,y) - (4L\lambda K + 3 L \lambda + 2K + 2L\lambda^{-1}K)\,.
\end{align*}
Now \eqref{eqn:qgeo} tells us that $\ell_G(\alpha')$ is at most $\tau d_G(x,y) + \tau$, so (subtracting $\tau d_G(x,y)$ from both sides and rearranging),
\begin{equation*}
  d_G(x,y) \leq \tau + 4L\lambda K + 3 L \lambda + 2K + 2L\lambda^{-1}K\,,
\end{equation*}
verifying \eqref{eq:dxybound} in this case.
\end{proof}

\subsection{Application to A/QI triples}
Returning to our main setting, we have already noted in Remark~\ref{rem:axiomshold} that if $(G,X,H)$ is an A/QI triple, all the axioms~\ref{ax:qie}--\ref{ax:wa} hold.
From this we easily derive Theorem~\ref{thm:stable}.  

\begin{proof}[Proof of Theorem~\ref{thm:stable}]
 Since $H$ quasi-isometrically embeds into the hyperbolic space $X$, it must be hyperbolic.   By Tran's Theorem~\ref{obs:tran}, it therefore suffices to show that $H$ is strongly quasi-convex, which is the content of Theorem~\ref{thm:axioms_imply_sqc}.
\end{proof}

The work of Antol\'in--Mj--Sisto--Taylor \cite[Theorem 1.1]{AMST19} (cf. \cite[Theorem 1.2(3)]{Tran}) gives the following corollary:
\begin{corollary}
  Let $(G,X,H)$ be an A/QI triple, and suppose that $G$ is finitely generated.  Then $H$ has finite height, finite width, and bounded packing.
\end{corollary}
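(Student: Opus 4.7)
The plan is to deduce this as a direct consequence of Theorem~\ref{thm:stable} combined with the general theorem that stable subgroups of finitely generated groups enjoy these three finiteness properties. First I would invoke Theorem~\ref{thm:stable}: since $(G,X,H)$ is an A/QI triple and $G$ is finitely generated, the subgroup $H$ is stable in $G$ in the sense of Definition~\ref{def:stable}.

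Next I would appeal to \cite[Theorem~1.1]{AMST19}, which asserts that any stable subgroup of a finitely generated group has finite height, finite width, and bounded packing (the result of \cite{Tran} gives the analogous conclusion via strong quasi-convexity, which by Theorem~\ref{obs:tran} is equivalent in our setting). Applying this result to the stable subgroup $H\le G$ yields all three conclusions simultaneously, completing the proof.

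No real obstacle arises here: all the work has been done in proving Theorem~\ref{thm:stable}, and the corollary is a bookkeeping step that simply cites the appropriate external reference. The only thing to verify is that the notion of stability in Definition~\ref{def:stable} matches the one used in \cite{AMST19}, which it does, and that the hypothesis of finite generation of $G$ (which is built into the statements of both Theorem~\ref{thm:stable} and \cite[Theorem~1.1]{AMST19}) is part of our standing assumption.
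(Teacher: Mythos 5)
Your argument is exactly the paper's: the corollary is stated as an immediate consequence of Theorem~\ref{thm:stable} together with \cite[Theorem~1.1]{AMST19} (cf.\ \cite[Theorem~1.2(3)]{Tran}), which is precisely the route you take. No gaps.
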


\section{The boundary of a cone-off}\label{sec:coneoff}
Let $X$ be a graph, and let $\mc{H}$ be a collection of subgraphs.  The \emph{cone-off of $X$ with respect to $\mc{H}$} is the space $\hat{X}_{\mc{H}}$  obtained from $X$ by adding an edge between each pair of distinct $x,y$, so that $\{x,y\}\subset Y^{(0)}$ for $Y\in \mc{H}$.  These new edges are called \emph{electric edges}.
If there is no ambiguity about $\mc{H}$, we may use $\hat{X}$ to denote the cone-off.

\begin{convention}
In this section we will restrict to continuous quasi-geodesics which are tame; see the discussion after Definition \ref{def:quasi-geodesic}.
\end{convention}

\begin{convention}  In this section, we consider two hyperbolic spaces, $X$ and $\hat X = \hat{X}_{\mc{H}}$.  Fix a basepoint $x_0\in X$; by definition, $x_0\in\hat X$, as well.  We will frequently calculate Gromov products in each space, and so to avoid confusion, we will use $(\cdot\mid\cdot)_{x_0}$ to denote the Gromov product in $X$ and $\langle \cdot\mid\cdot\rangle_{x_0}$ to denote the Gromov product in $\hat X$.  
\end{convention}

There are various other possibilities for a definition of a cone-off of $X$.  It is clear, for example, that if $\hat{X}$ is the cone-off of $X$ with respect to $\mc{H}$, then it is quasi-isometric to the space obtained by adjoining, for each $Y\in \mc{H}$, a new vertex $v_Y$ connected by an edge to every vertex of $Y$.  When $X$ is the Cayley graph of a group, this construction is due to Farb \cite{Farb}.

In case the collection $\mc{H}$ is \emph{quasi-dense}, in the sense that every point is within bounded distance of some element of $\mc{H}$, there is another possibility, explored by Bowditch in~\cite{Bowditch12}.  Namely, one can let $\Gamma_{\mc{H},R}$ be a graph with vertex set $\mc{H}$, and connect two vertices if the corresponding sets have distance at most $R$.  For suitably chosen $R$, the space $\Gamma_{\mc{H},R}$ will be quasi-isometric to $\hat{X}$. 

We will use work of Kapovich--Rafi, Bowditch, and Dowdall--Taylor to understand $\partial \hat X$.  We start with a theorem of Bowditch (in the case $\mc{H}$ is quasi-dense) and Kapovich--Rafi.
\begin{theorem}[{\cite[Proposition~2.6]{KapovichRafi}, \cite[7.12]{Bowditch12}}]\label{thm:conehyp}
  Let $X$ be a Gromov hyperbolic graph, and let $\mc{H}$ be a  collection of uniformly quasi-convex subgraphs.  Then the cone-off of $X$ with respect to $\mc{H}$ is Gromov hyperbolic.  Moreover, geodesics in $X$ can be reparametrized to be uniform quasi-geodesics in $\hat{X}$.
  \end{theorem}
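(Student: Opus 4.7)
The plan is to establish both assertions simultaneously via Bowditch's guessing-geodesics criterion, using $X$-geodesics themselves as the preferred family of paths in $\hat X$. The central technical ingredient, which I will call the \emph{length comparability lemma}, is that for every pair of vertices $u,v$ in $\hat X$, the $\hat X$-length of the $X$-geodesic $[u,v]_X$ is at most $K\, d_{\hat X}(u,v) + K$, for a uniform constant $K$ depending only on the hyperbolicity constant of $X$ and the quasi-convexity constant of $\mathcal H$. Once this lemma is in hand, both statements of the theorem fall out quickly.

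I would prove the length comparability lemma by induction on the number of electric edges used by a geodesic $\hat\alpha$ from $u$ to $v$ in $\hat X$. If $\hat\alpha$ uses no electric edges then it is itself a path in $X$, so $[u,v]_X$ is no longer in $X$ and hence no longer in $\hat X$. Otherwise, choose an electric edge $[p,q]$ on $\hat\alpha$ with $p, q$ both in some $Y \in \mathcal H$. Uniform quasi-convexity of $Y$ ensures that an $X$-geodesic from $p$ to $q$ lies in a bounded $X$-neighborhood of $Y$; combined with hyperbolicity of $X$ and a thin-quadrilateral argument, this forces $[u,v]_X$ to enter and leave this neighborhood at points $p', q'$ that are close in $X$ to vertices $\tilde p, \tilde q \in Y$. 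The single electric edge $[\tilde p, \tilde q]$ then shortcuts the corresponding subsegment of $[u,v]_X$, and the two remaining sub-geodesics of $[u,v]_X$ correspond to $\hat X$-paths with strictly fewer electric edges than $\hat\alpha$, so induction closes.

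With the lemma established, hyperbolicity of $\hat X$ follows by verifying Bowditch's guessing-geodesics criterion for the family $\{[u,v]_X\}_{u,v}$, viewed as paths in $\hat X$. Connectedness is automatic, diameter control for close pairs is exactly the content of the lemma, and thinness of the associated triangles in $\hat X$ is inherited from the $\delta_X$-thin triangles of $X$ because $d_{\hat X} \le d_X$ pointwise. The moreover clause also follows immediately: reparametrizing any $X$-geodesic by $\hat X$-arc-length produces a quasi-isometric embedding of an interval into $\hat X$, with constants depending only on $K$ and the maximum edge-length in $X$.

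The main obstacle is the shortcut step in the inductive argument for the length comparability lemma. One needs a quantitative coarse-projection statement onto each $Y \in \mathcal H$: the entry and exit of $[u,v]_X$ into the $X$-neighborhood of $Y$ must project close, in $X$, to the chosen endpoints $p, q$, so that inserting a single electric edge of $\hat X$ strictly reduces the electric-edge count along $\hat\alpha$ rather than merely shortening it by some arbitrary amount. The uniform quasi-convexity hypothesis on $\mathcal H$ is essential here; with only individual quasi-convexity, the constants would degrade with the depth of the induction and the bound would fail to close up.
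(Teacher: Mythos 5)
First, a remark on scope: the paper does not prove Theorem~\ref{thm:conehyp} at all --- it quotes it from Kapovich--Rafi and Bowditch --- so there is no internal proof to compare with. Your overall skeleton (take $X$--geodesics as the candidate paths, verify a guessing-geodesics criterion in $\hat X$, and note that slimness of these triangles in $\hat X$ comes for free from $d_{\hat X}\le d_X$) is the standard route and is in the spirit of the cited proofs; that last observation is correct as stated.

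However, your central ``length comparability lemma'' is false, and the argument collapses with it. An $X$--geodesic $[u,v]_X$ uses no electric edges, so its $\hat X$--length is simply its $X$--length $d_X(u,v)$. Now take $Y\in\mathcal H$ of unbounded diameter (the case of interest: in the paper $Y$ is an orbit $gHx_0$ of an infinite subgroup) and vertices $u,v\in Y$ with $d_X(u,v)$ huge: then $d_{\hat X}(u,v)=1$ while the $\hat X$--length of $[u,v]_X$ is arbitrarily large, so no bound of the form $K\,d_{\hat X}(u,v)+K$ can hold. The flaw in your induction is that the shortcut step replaces a subsegment of $[u,v]_X$ by an electric edge, and so at best bounds the length of a \emph{modified, electrified} path; the (possibly enormous) middle subsegment of $[u,v]_X$ fellow-travelling $Y$ is never accounted for, so nothing is proved about $[u,v]_X$ itself. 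The same example shows that your deduction of the ``moreover'' clause fails: reparametrizing $[u,v]_X$ by $\hat X$--arc-length changes nothing (the path contains no electric edges), and the resulting map has bounded image in $\hat X$ but arbitrarily long domain, so it is not a quasi-isometric embedding. What is actually needed, and what is true, are \emph{diameter} statements rather than length statements: for the local condition of the criterion, if $d_{\hat X}(u,v)\le 1$ then either $d_X(u,v)\le 1$ or $u,v$ lie in a common $Y$, and uniform $\kappa$--quasi-convexity places $[u,v]_X$ in the $\kappa$--neighborhood of $Y$, whose $\hat X$--diameter is at most roughly $2\kappa+1$ --- no induction on electric edges is needed; and the ``moreover'' statement is that $X$--geodesics are \emph{unparametrized} (reparametrized) uniform quasi-geodesics in $\hat X$, where the reparametrization collapses exactly those long subsegments travelling in a neighborhood of a single $Y$. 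Proving that requires controlling $\operatorname{diam}_{\hat X}$ of subsegments $[a,b]_X$ in terms of $d_{\hat X}(a,b)$ (coarse progress), which is the content of Kapovich--Rafi's argument, not an arc-length bound.
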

Combining Theorem \ref{thm:conehyp} with  quasi-geodesic stability (Theorem \ref{thm:stability}) yields the following.
\begin{corollary}\label{cor:nospiraling}
  Let $X$, $\mc{H}$, $\hat{X}$ be as above, and suppose that $\gamma$ is a quasi-geodesic ray in $X$.  Then the image of $\gamma$ in $\hat{X}$ is either bounded or has a unique limit point in $\partial \hat X$.
\end{corollary}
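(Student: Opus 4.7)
The plan is first to dispose of the trivial case in which the image of $\gamma$ in $\hat{X}$ is bounded, and then, assuming unboundedness, to show that any two sequences $\gamma(s_n), \gamma(t_n)$ escaping to infinity in $\hat{X}$ satisfy $\langle \gamma(s_n) \mid \gamma(t_n) \rangle_{x_0} \to \infty$, which forces the image of $\gamma$ to have a unique limit point in $\partial\hat{X}$. The main tools will be Theorem~\ref{thm:conehyp}, which supplies, for every $X$-geodesic, a reparametrization that is a uniform quasi-geodesic in $\hat{X}$; Theorem~\ref{thm:stability}, which places $\gamma$ within bounded $X$-Hausdorff distance of any $X$-geodesic connecting two of its points; and the elementary fact that the inclusion $X \hookrightarrow \hat{X}$ is $1$-Lipschitz, so bounded $X$-Hausdorff distance automatically gives bounded $\hat{X}$-Hausdorff distance.

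The first step is the following auxiliary claim: if the image of $\gamma$ in $\hat{X}$ is unbounded, then $d_{\hat{X}}(x_0, \gamma(t)) \to \infty$ as $t \to \infty$. Otherwise, $d_{\hat{X}}(x_0, \gamma(t_n))$ remains bounded along some sequence $t_n \to \infty$. For each $n$, choose an $X$-geodesic $g_n$ from $\gamma(0)$ to $\gamma(t_n)$; by Theorem~\ref{thm:conehyp}, its reparametrization is a uniform quasi-geodesic in $\hat{X}$, so the image of $g_n$ has $\hat{X}$-diameter controlled by $d_{\hat{X}}(\gamma(0), \gamma(t_n))$, hence bounded uniformly in $n$. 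By Theorem~\ref{thm:stability} applied in $X$, $\gamma|_{[0, t_n]}$ lies at uniformly bounded $X$-Hausdorff distance, and therefore uniformly bounded $\hat{X}$-Hausdorff distance, from $g_n$, so $\gamma|_{[0, t_n]}$ has uniformly bounded $\hat{X}$-diameter. Taking unions over $n$ would then bound the whole image of $\gamma$ in $\hat{X}$, a contradiction.

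With the claim in hand, I would fix sequences $s_n, t_n$ with $d_{\hat{X}}(x_0, \gamma(s_n)), d_{\hat{X}}(x_0, \gamma(t_n)) \to \infty$; the claim forces $s_n, t_n \to \infty$, and then $\min_{s \in [\min(s_n,t_n),\max(s_n,t_n)]} d_{\hat{X}}(x_0, \gamma(s)) \to \infty$. Let $\eta_n$ be an $X$-geodesic from $\gamma(s_n)$ to $\gamma(t_n)$. Stability in $X$ places $\eta_n$ within bounded $X$-Hausdorff distance of $\gamma|_{[\min(s_n,t_n),\max(s_n,t_n)]}$, which transfers to $\hat{X}$-Hausdorff distance, yielding $d_{\hat{X}}(x_0, \eta_n) \to \infty$. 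Theorem~\ref{thm:conehyp} then turns $\eta_n$ into a uniform quasi-geodesic in $\hat{X}$, and stability in $\hat{X}$ tells us any $\hat{X}$-geodesic joining $\gamma(s_n)$ to $\gamma(t_n)$ lies at bounded $\hat{X}$-distance from $\eta_n$; since $\langle \gamma(s_n) \mid \gamma(t_n) \rangle_{x_0}$ differs by an additive constant from the distance from $x_0$ to such a geodesic, this Gromov product tends to infinity. The main subtlety is juggling the two metrics: stability in $X$ does not automatically yield stability in $\hat{X}$, so both hyperbolic structures must be used in tandem, with $d_X$-Hausdorff bounds carefully transferred to $d_{\hat{X}}$-bounds via the $1$-Lipschitz inclusion.
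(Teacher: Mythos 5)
Your proposal is correct and follows exactly the route the paper intends: the paper states Corollary~\ref{cor:nospiraling} as an immediate consequence of combining Theorem~\ref{thm:conehyp} (reparametrized $X$--geodesics are uniform quasi-geodesics in $\hat X$) with quasi-geodesic stability (Theorem~\ref{thm:stability}), which is precisely the argument you carry out, carefully transferring Hausdorff bounds from $d_X$ to $d_{\hat X}$ via the $1$--Lipschitz inclusion and then using hyperbolicity of $\hat X$ to control Gromov products. Your write-up simply supplies the details the paper leaves to the reader.
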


Since the cone-off is hyperbolic, it is natural to ask about its Gromov boundary.
\begin{question}
  If $\hat{X}$ is a cone-off of $X$, how is the Gromov boundary of $\hat{X}$ related to that of $X$?
\end{question}
Note that neither space quasi-isometrically embeds in the other, so it is not immediately clear that there should be any relation.  Dowdall and Taylor provide the following definition and theorem \cite{DowdallTaylor}.
\begin{definition}
  Let $\partial_u X\subseteq \partial X$ consist of the points represented by quasi-geodesic rays whose projection to $\hat X$ is unbounded.\footnote{Dowdall and Taylor use the notation $\partial_{\hat X} X$ in \cite{DowdallTaylor}.}
\end{definition}
Theorem~\ref{thm:conehyp} implies that the projection of a quasi-geodesic ray is unbounded if and only if it is a quasi-geodesic ray (after possibly reparametrizing).
\begin{theorem}[{\cite[Theorem 3.2]{DowdallTaylor}}]\label{thm:DTthm}
    The map $X\to \hat X$ extends continuously to $\partial_{u} X$, and the restriction $\psi\from \partial_{u} X \to \partial \hat X$ is a homeomorphism.
\end{theorem}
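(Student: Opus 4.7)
The plan is to build a continuous bijection $\psi\co \partial_u X \to \partial \hat X$ extending the natural map $X\to \hat X$, then upgrade it to a homeomorphism. For $\xi\in \partial_u X$, choose a quasi-geodesic ray $\gamma$ in $X$ based at $x_0$ representing $\xi$ (via Remark~\ref{rem:visibleboundary}); by definition of $\partial_u X$, its image in $\hat X$ is unbounded, and by Theorem~\ref{thm:conehyp} that image (suitably reparametrized) is a uniform quasi-geodesic in $\hat X$. Let $\psi(\xi)$ be its endpoint in $\partial \hat X$, unique by Corollary~\ref{cor:nospiraling}. Well-definedness follows because two such rays are at bounded $X$-Hausdorff distance (Theorem~\ref{thm:stability}), hence at bounded $\hat X$-Hausdorff distance (the inclusion is $1$-Lipschitz), so determine the same point in $\partial \hat X$.

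For continuity of the extended map $\bar\psi\co X\cup\partial_u X \to \hat X\cup \partial \hat X$, I would argue directly with Gromov products. If $x_n\to \xi$ in $X\cup \partial_u X$, pick tame quasi-geodesic rays $\gamma, \gamma_n$ from $x_0$ to $\xi$ and $x_n$ respectively (a geodesic segment when $x_n\in X$). The hypothesis and Lemma~\ref{lem:gprodcontrol} force $\gamma_n$ and $\gamma$ to share arbitrarily long initial subsegments in $X$; applying Theorem~\ref{thm:conehyp} to each ray and using that $\xi\in \partial_u X$ (so these shared initial segments have unbounded $\hat X$-length), the projections fellow-travel arbitrarily far along $\hat\gamma$, giving $\langle x_n \mid \psi(\xi)\rangle_{x_0}\to\infty$ in $\hat X$. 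The same argument with $x_n\in \partial_u X$ shows $\psi$ is continuous.

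Surjectivity is the main obstacle. Given $\eta\in\partial \hat X$, pick $y_n\in X$ with $y_n\to\eta$ in $\hat X$ and let $\sigma_n$ be geodesics in $X$ from $x_0$ to $y_n$. By Theorem~\ref{thm:conehyp} the $\sigma_n$ project to uniform quasi-geodesics in $\hat X$, and Morse in $\hat X$ says pairs of them fellow-travel on long initial arcs tracking a quasi-geodesic ray to $\eta$. I would then show $(y_m\mid y_n)_{x_0}\to \infty$ in $X$ by slim-triangle analysis on the $X$-triangle $x_0, y_m, y_n$: the apex of the triangle lies on a common initial sub-arc of $\sigma_m$ and $\sigma_n$, and this apex must retreat to infinity in $X$ (otherwise $\sigma_m$ and $\sigma_n$ would diverge in $X$ immediately, but would still need to track a single quasi-geodesic in $\hat X$ with distance growing in $\hat X$, which bounds the time they can spend in a single $\mc{H}$-neighborhood by quasi-convexity and hyperbolicity). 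Thus $\{y_n\}$ represents some $\xi\in \partial X$, and the quasi-geodesic ray from $x_0$ given by concatenating initial subsegments of the $\sigma_n$ shows $\xi\in \partial_u X$ with $\psi(\xi)=\eta$.

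Injectivity and continuity of the inverse follow together from the same pieces. If $\psi(\xi_1)=\psi(\xi_2)$, the rays $\gamma_1, \gamma_2$ project to quasi-geodesics in $\hat X$ with a common endpoint, so their Gromov products $\langle \gamma_1(s) \mid \gamma_2(t)\rangle_{x_0}$ grow in $\hat X$; the slim-triangle argument from the previous paragraph, run on $x_0, \gamma_1(s), \gamma_2(t)$, transfers this to $(\gamma_1(s)\mid \gamma_2(t))_{x_0}\to\infty$ in $X$, hence $\xi_1=\xi_2$. For the inverse, if $\eta_n\to \eta$ in $\partial \hat X$, picking quasi-geodesic rays representing each in $X$ (via the surjectivity construction) and running the same Gromov-product transfer backwards shows $\psi^{-1}(\eta_n)\to \psi^{-1}(\eta)$ in $\partial X$. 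The hard part throughout is this transfer between $\hat X$-fellow-traveling and $X$-fellow-traveling, which rests on the tension between $\sigma_n$ being a geodesic in $X$ (limiting how fast it can collapse) and tracking a common $\hat X$-quasi-geodesic (forcing geometric alignment).
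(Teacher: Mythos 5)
You should note at the outset that the paper does not prove this statement: it is quoted verbatim from Dowdall--Taylor, who establish it for arbitrary coarsely surjective alignment-preserving maps between hyperbolic spaces, and the only input the paper supplies is that the cone-off map $X\to\hat X$ is alignment preserving, via the ``Moreover'' clause of Theorem~\ref{thm:conehyp}. Your proposal is thus a blind re-derivation of the cited result in this special case, and its architecture is sensible: define $\psi$ on rays, check well-definedness and continuity with Gromov products, then get surjectivity, injectivity, and continuity of the inverse by transferring Gromov-product estimates between $X$ and $\hat X$. The ``easy'' transfer direction you use for well-definedness and continuity (the map is $1$-Lipschitz, images of $X$-geodesics are unparametrized uniform quasi-geodesics in $\hat X$, and $\hat X$-distances to $x_0$ go to infinity along an unbounded image ray) is fine.

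The genuine gap is at the load-bearing step: deducing that $(y_m\mid y_n)_{x_0}$ is large in $X$ from $\langle y_m\mid y_n\rangle_{x_0}$ being large in $\hat X$. Your parenthetical justification --- that quasi-convexity and hyperbolicity ``bound the time they can spend in a single $\mc{H}$-neighborhood'' --- does not address the actual difficulty: after $\sigma_m$ and $\sigma_n$ separate in $X$ they can stay $\hat X$-close by running along a chain of \emph{several} coned sets, and no bound on the time spent near one element of $\mc{H}$ rules this out; moreover the relevant object is not $\sigma_m,\sigma_n$ individually but the third side of the triangle. The correct mechanism is to apply the ``Moreover'' of Theorem~\ref{thm:conehyp} to the $X$-geodesic $[y_m,y_n]$: it passes within $(y_m\mid y_n)_{x_0}+2\delta$ of $x_0$ in $X$, hence in $\hat X$, and since its image is an unparametrized uniform quasi-geodesic of $\hat X$ it lies within a Morse constant $M$ of an $\hat X$-geodesic from $y_m$ to $y_n$; as Gromov products are bounded above by the distance to a geodesic, this yields $\langle y_m\mid y_n\rangle_{x_0}\le (y_m\mid y_n)_{x_0}+2\delta+M$, which is exactly the transfer your surjectivity, injectivity, and inverse-continuity arguments need. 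A secondary gap of the same nature: having produced $\xi\in\partial X$, you assert without proof that $\xi\in\partial_u X$ and $\psi(\xi)=\eta$; this again needs the displayed inequality (if the ray to $\xi$ had $\hat X$-bounded image of diameter $C$, then for large $m,n$ the geodesic $[y_m,y_n]$ would come $\hat X$-distance at most $C+O(\delta)$ from $x_0$, bounding $\langle y_m\mid y_n\rangle_{x_0}$ and contradicting $\hat y_n\to\eta$), after which Corollary~\ref{cor:nospiraling} and a fellow-traveling estimate identify the limit point with $\eta$. So the skeleton of your argument is right, but as written the crucial $\hat X$-to-$X$ transfer is unproven, and it is precisely the point where the Kapovich--Rafi/alignment-preserving property must be invoked.
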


\begin{remark}
  Dowdall--Taylor's result is in the slightly more general setting of coarsely surjective \emph{alignment preserving} maps.  It follows from the ``Moreover'' statement in Kapovich--Rafi's Theorem~\ref{thm:conehyp} that the cone-off map $X\to \hat X$ is alignment preserving.
  
  Let $\Isom(X,\mc{H})$ be the group of isometries of $X$ which preserve the collection $\mc{H}$.  The homeomorphism $\psi$ from Theorem \ref{thm:DTthm} is $\Isom(X,\mc{H})$--equivariant.
\end{remark}

Recall that given $Y\in\mc H$, we denote by $\Lambda(Y)\subset \partial X$  the limit set of $Y$ in $\partial X$.  It is clear that $\Lambda(Y)$ is in the complement of $\partial_{u}X$, so
\begin{equation}\label{eq:include} \tag{\ddag}
  \partial_{u} X \subseteq \partial X \setminus \left(\bigcup_{Y\in\mc{H}}\Lambda(Y)\right).
\end{equation}
In general this can be a proper inclusion, as the following example shows.
\begin{example}\label{ex:bs12}
  Let $X$ be the Bass-Serre tree for $BS(1,2) = \langle a,t \mid tat^{-1}=a^2 \rangle$, so that $\partial X$ is a Cantor set.  Then $t$ acts loxodromically on $X$ with axis $\gamma$.  ($BS(1,2)$ does not act acylindrically along this axis, since the infinite cyclic group generated by $a$ stabilizes a sub-ray.)  Let $\mc{H}$ be the collection of translates of $\gamma$.  The union $\bigcup_{Y\in\mc{H}}\Lambda(Y)$ is a countable subset of $\partial X$, so $\partial X\setminus \bigcup_{Y\in \mc{H}}\Lambda(Y)$ is nonempty.  However the cone-off $\hat{X}$ of $X$ with respect to $\mc{H}$ is bounded, so $\partial \hat{X} = \emptyset$.  
\end{example}

We will show that if $(G,X,H)$ is an A/QI triple, then the inclusion \eqref{eq:include} is an equality.

\subsection{De-electrification}We need to be able to ``lift'' paths from $\hat{X}$ to $X$ in a consistent way.  There are several notions of this in the literature, usually called ``de-electrification.''   The following definition is from \cite{Spriano}, but it is related to definitions in Bowditch \cite{Bowditch12} and Dahmani--Mj \cite{DahmaniMj}.  What we call \emph{de-electrification} here is what Spriano calls \emph{embedded de-electrification}.

\begin{definition}\label{def:deelectrification}
  Let $\hat X$ be a cone-off of a graph $X$ with respect to a family of uniformly quasi-isometrically embedded subgraphs $\mc H$.  An \emph{$\mc{H}$--geodesic} is a path in $X$ which is completely contained in some $Y\in \mc{H}$ and is geodesic in $Y$.
  Let $\gamma=u_1\cdot e_1\cdot \ldots\cdot e_n\cdot u_{n+1}$ be a concatenation of geodesics in $\hat X$, where each $e_i$ is an electric edge and the $u_i$ are (possibly trivial) segments of $X$.  A \emph{de-electrification} of $\gamma$ is a concatenation $u_1\cdot \eta_1\cdot \ldots\cdot\eta_n\cdot u_{n+1}$ where each $\eta_i$ is an $\mc{H}$--geodesic joining the endpoints of $e_i$.  
\end{definition}

If the subgraphs in $\mathcal H$ are not geometrically separated, then a de-electrification of a quasi-geodesic in $\hat X$ can be arbitrarily far from being geodesic in $X$.  In particular we cannot expect every de-electrification to be quasi-geodesic.  However, Spriano shows that points of $\hat X$ are always connected by quasi-geodesics with nice de-electrifications.  Say that $K$ is a \emph{constant of quasi-isometric embeddedness} for $\mc{H}$ if every element of $\mc{H}$ is $(K,K)$--quasi-isometrically embedded, with $K$--quasi-convex image.

\begin{lemma}[{\cite[Corollary 2.29]{Spriano}}]\label{lem:embedded} 
  For any $\delta\ge 0$, $K\ge 1$, there is a constant $\omega\ge 1$ so that the following holds.
  Let $X$ be a $\delta$--hyperbolic graph and $\mc H$ a family of uniformly quasi-isometrically embedded subgraphs with constant of quasi-isometric embeddedness $K$.  Let $\hat X$ be the cone-off of $X$ with respect to $\mc H$.    Then for each pair of vertices $x,y\in X$ there exists an $\omega$--quasi-geodesic $\gamma$ of $\hat X$ from $x$ to $y$ so that every de-electrification of $\gamma$ is an $\omega$--quasi-geodesic.
  \end{lemma}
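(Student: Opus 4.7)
The plan is to build, for each pair of vertices $x,y \in X$, a quasi-geodesic $\gamma$ of $\hat X$ from $x$ to $y$ by surgering an $X$-geodesic $\gamma_0$ from $x$ to $y$, introducing electric edges only at ``deep'' passages through members of $\mc H$ so that every de-electrification remains a quasi-geodesic in $X$.  Fix constants $\delta$ (hyperbolicity of $X$) and $K$ (QI-embedding and quasi-convexity constant for $\mc H$).  Choose thresholds $K' = K'(\delta,K)$ and $L = L(\delta,K)$, both large enough for the estimates below to go through.

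First I would identify a finite collection of maximal disjoint subsegments $[a_i, b_i]$ of $\gamma_0$ each of which is contained in the $K'$-neighborhood of some $Y_i \in \mc H$ and has $X$-length at least $L$.  For each such segment, I would pick nearest-point projections $a_i', b_i' \in Y_i$ of $a_i, b_i$, and define $\hat\gamma$ as the path obtained from $\gamma_0$ by substituting, for each $[a_i, b_i]$, the concatenation of a short $X$-geodesic $[a_i, a_i']$, the electric edge $a_i' b_i'$, and the short $X$-geodesic $[b_i', b_i]$.  By the moreover part of Theorem \ref{thm:conehyp} the surviving $X$-subsegments are uniformly quasi-geodesic in $\hat X$; since the surgeries genuinely shorten the path in $\hat X$ only across legitimately long excursions near some $Y_i$, $\hat\gamma$ itself is a uniform quasi-geodesic in $\hat X$.

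Now let $\gamma'$ be any de-electrification of $\hat\gamma$, so each electric edge $a_i'b_i'$ is replaced by an $\mc H$-geodesic $\eta_i \subset Y_i$.  Because $Y_i$ is $K$-quasi-isometrically embedded in $X$, each $\eta_i$ is a $K$-quasi-geodesic in $X$.  Thus $\gamma'$ is a concatenation of $X$-geodesic pieces (coming from the unmodified parts of $\gamma_0$ and the short connectors) with $K$-quasi-geodesics inside the various $Y_i$.  To conclude that $\gamma'$ is a uniform quasi-geodesic in $X$, I would apply a local-to-global criterion in the spirit of Lemma \ref{lem:brokenqgeod}: it suffices to verify that each constituent segment has length at least some constant depending on $\delta,K,L$, and that at each junction the Gromov product between the adjacent endpoints is uniformly bounded.

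The main obstacle is the Gromov-product bound at the junctions.  Here one uses that $a_i, b_i$ are \emph{exit} points from a $K'$-neighborhood of $Y_i$ along the $X$-geodesic $\gamma_0$: if the preceding $X$-subsegment of $\gamma_0$ and the initial portion of $\eta_i$ had large Gromov product at $a_i'$, then $\gamma_0$ would have remained $K'$-close to $Y_i$ past $a_i$, contradicting maximality of $[a_i,b_i]$.  Quantifying this via the slim-triangle estimate, together with $K$-quasi-convexity of $Y_i$ and a careful choice of $K'$ and $L$ relative to $\delta$ and $K$, yields the required uniform Gromov-product bound and hence Minasyan's hypothesis applies.  Taking $\omega$ to be the maximum of the $\hat X$ quasi-geodesic constant for $\hat\gamma$ and the $X$ quasi-geodesic constant produced by the broken-geodesic lemma completes the proof.
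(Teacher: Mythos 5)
You should first note that the paper itself does not prove this lemma; it is quoted from Spriano \cite[Corollary 2.29]{Spriano}, so there is no internal proof to compare with, and your argument has to stand on its own.

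The real content of the statement is the step you dispatch in a single sentence: that the surgered path $\hat\gamma$ is a uniform quasi-geodesic of $\hat X$ --- and, given the standing convention of this section and the way the lemma is used in Lemma~\ref{lem:bddimpliesY} (where $\ell_{\hat X}(\sigma_i)$ is bounded by the $\hat X$--distance of its endpoints), a uniformly \emph{tame} one. Nothing in your write-up bounds $\ell_{\hat X}\bigl(\hat\gamma|_{[p,q]}\bigr)$ in terms of $d_{\hat X}(p,q)$: the phrase ``the surgeries genuinely shorten the path in $\hat X$ only across legitimately long excursions'' is exactly the assertion to be proved, and citing Theorem~\ref{thm:conehyp} for the surviving $X$--subsegments does not control the concatenation. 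Proving it requires, e.g., taking an $\hat X$--geodesic from $p$ to $q$, de-electrifying it into at most $d_{\hat X}(p,q)$ pieces each lying $K$--close to a member of $\mc H$, projecting these pieces to $\gamma_0$, and showing each piece can account for only a bounded amount of un-surgered length of $\gamma_0$; none of this appears. Worse, with the excursions chosen as ``maximal disjoint subsegments'' the claim can actually fail: if $\gamma_0$ lies in $N_{K'}(Y_1)$ along its entire (huge) length $T$ while $\gamma_0\cap N_{K'}(Y_2)$ is a middle segment of length $2L$, a maximal disjoint collection may consist of the short segment alone (the long candidate overlaps it and is discarded); then $\ell_{\hat X}(\hat\gamma)\approx T$ while $d_{\hat X}(x,y)$ is bounded by a constant, so $\hat\gamma$ is not an $\omega$--quasi-geodesic of $\hat X$ for any uniform $\omega$. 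A more careful selection (for instance a left-to-right ``maximal skip'' choice) is needed, and that choice is precisely what must feed the missing estimate.

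There is also a hypothesis mismatch in the half of the argument you do spell out. Lemma~\ref{lem:brokenqgeod} requires \emph{every} piece of the concatenation to have length at least $\tau C_1$, but your decomposition of a de-electrification necessarily contains the connectors $[a_i,a_i']$, $[b_i',b_i]$ of length at most $K'$, and the un-surgered gaps of $\gamma_0$ between consecutive excursions can be arbitrarily short or empty; so ``verify that each constituent segment has length at least some constant'' cannot be arranged as stated. One must first absorb the short pieces into the neighboring long ones (the $\eta_i$, whose endpoints are $\geq L-2K'$ apart in $X$, and the long $\gamma_0$--gaps) and then bound the Gromov products between consecutive long pieces; this does work, since by the Morse property each $\eta_i$ fellow-travels the corresponding subsegment of $\gamma_0$, which is the robust form of your ``exit point'' argument. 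In short, the de-electrification half of your plan is repairable, but the $\hat X$--side estimate, which is the heart of the lemma, is missing, and the excursion selection must be modified for the statement you need to be true.
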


\putinbox{We fix the following data for the rest of the section:  $(G,X,H)$ is an A/QI triple where $X$ is a $\delta$--hyperbolic graph containing a basepoint $x_0$ and $Y_0$ is an $H$--cocompact graph in $X$ containing $x_0$ with constant of quasi-isometric embeddedness $K$.  The space $\hat X$ is the cone-off of $X$ with respect to the collection $\mc{H}$ of $G$--translates of $Y_0$.  The constant $\omega$ is obtained by applying Lemma~\ref{lem:embedded} to $\delta$ and $K$.}
\subsection{Characterizing rays with bounded image and proof of Theorem~\ref{thm:aqiboundary}}

\begin{lemma} \label{lem:bddimpliesY}
  Suppose $\gamma$ is a quasi-geodesic ray in $X$.  If $\gamma$ is bounded in $\hat X$ then $[\gamma]\in \bigcup_{g\in G}\Lambda(gHg^{-1})$.
 \end{lemma}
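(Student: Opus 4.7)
The strategy is to show that $\gamma$ is eventually contained in a bounded neighborhood of some $gY_0\in\mc H$; since $gY_0$ is $gHg^{-1}$--cocompact and quasi-isometrically embedded in $X$, this forces $[\gamma]\in\Lambda(gHg^{-1})$.

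Let $D$ be an upper bound for $d_{\hat X}(x_0,\gamma(t))$, and choose a sequence $t_n\to\infty$.  For each $n$, Lemma~\ref{lem:embedded} provides an $\omega$--quasi-geodesic $\alpha_n$ in $\hat X$ from $x_0$ to $\gamma(t_n)$ whose every de-electrification is an $\omega$--quasi-geodesic in $X$; fix such a de-electrification $\tilde\alpha_n$.  Since the $\hat X$--length of $\alpha_n$ is at most $\omega D+\omega$, the number of electric edges in $\alpha_n$, and hence the number of $\mc H$--geodesic pieces of $\tilde\alpha_n$, is uniformly bounded by some $N$.  The $X$--length of $\tilde\alpha_n$ tends to infinity, while the non--$\mc H$--geodesic edges of $\tilde\alpha_n$ contribute at most $N+1$ to its length, so the total length of its $\mc H$--geodesic pieces tends to infinity as well.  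By the pigeonhole principle and repeated extraction of subsequences, I may fix the smallest index $j$ for which the length of the $j$-th $\mc H$--geodesic piece $\eta_n\subseteq g_n Y_0$ tends to infinity; for this $j$ the lengths of the preceding pieces remain uniformly bounded.

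The Morse property in the hyperbolic space $X$, applied to the two $\omega$--quasi-geodesics $\tilde\alpha_n$ and $\gamma|_{[0,t_n]}$ (which share endpoints), locates the endpoints $p_n,q_n$ of $\eta_n$ near points $\gamma(u_n)$ and $\gamma(v_n)$ with $u_n$ uniformly bounded (say $u_n\le C_1'$) and $v_n\to\infty$.  A further Morse comparison between $\eta_n\subseteq g_nY_0$ and $\gamma|_{[u_n,v_n]}$ shows $\gamma([u_n,v_n])\subseteq \mathcal N_{C'}(g_nY_0)$ for some uniform constant $C'$.  If the cosets $g_nH$ were pairwise distinct along an infinite subsequence, we could further pass to a subsequence with $v_n$ strictly increasing and obtain, for each large $m$,
\[
\gamma([C_1',v_m])\subseteq \bigcap_{n\ge m}\mathcal N_{C'}(g_nY_0),
\]
a set whose diameter tends to infinity with $m$.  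This contradicts Theorem~\ref{thm:newacyl} applied with $Y=Y_0$ and $\varepsilon=C'$.  Hence some coset $gH$ is hit infinitely often; since $Y_0$ is $H$--invariant, $g_nY_0=gY_0$ along this subsequence, so $\gamma$ is eventually contained in $\mathcal N_{C'}(gY_0)$, giving $[\gamma]\in\Lambda(gHg^{-1})$.

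The hardest part of the argument is the bookkeeping in the pigeonhole/Morse step: extracting a subsequence of long $\mc H$--geodesic pieces $\eta_n$ whose initial endpoints $p_n$ are synchronized along $\gamma$, so that the neighborhoods $\mathcal N_{C'}(g_nY_0)$ genuinely overlap in a long segment of $\gamma$ rather than drifting apart.  Once this synchronization is in place, acylindricity along $H$ (in the form of Theorem~\ref{thm:newacyl}) forces the repetition of cosets and finishes the proof.
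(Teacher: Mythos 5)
Your proof is correct and follows essentially the same route as the paper's: Spriano's de-electrification lemma plus boundedness in $\hat X$ forces a single $\mc{H}$--geodesic piece to grow while its prefix stays bounded, the Morse property places long, synchronized segments of $\gamma$ in uniform neighborhoods of the translates $g_nY_0$, and Theorem~\ref{thm:newacyl} restricts the cosets involved. The only (cosmetic) difference is the endgame: you rule out infinitely many distinct cosets by contradiction and conclude eventual containment of $\gamma$ in one $\mathcal N_{C'}(gY_0)$, whereas the paper bounds the number of translates by $N$ and then uses that the limit set of a finite union of quasi-convex sets is the union of their limit sets.
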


 \begin{proof}
   Fix $\tau_0$ so that $\gamma$ is a $\tau_0$--quasi-geodesic ray.  We can suppose that $\gamma$ begins at $x_0$.  Choose a sequence $\{x_i\}$ on $\gamma$ tending to infinity in $X$.  We will pass to subsequences several times but use the same notation for the subsequences.

   For each $i$, let $\sigma_i$ be the $\omega$--quasi-geodesic from $x_0$ to $x_i$ in $\hat X$ provided by Lemma~\ref{lem:embedded}, and let $\tilde\sigma_i$ be any de-electrification.  Then $\tilde\sigma_i$ is contained in an $M$--neighborhood of $\gamma$, where $M$ is the Morse constant for parameters $\max\{\tau_0,\omega\}$, $\delta$.
  \begin{figure}[htbp]
    \centering
    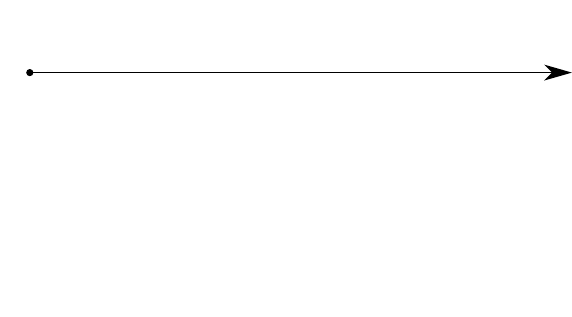\label{fig:spiral}
    \caption{De-electrifications of the paths $\sigma_i$ must have some subsegments whose lengths go to infinity.}
  \end{figure}

  Since $\gamma$ is bounded in $\hat X$, there is some $C\ge 0$ so that $\ell_{\hat X}(\sigma_i)\le C$ for all $i$.
  The quasi-geodesics $\tilde \sigma_i$  are piecewise quasi-geodesics of the form $\tilde\sigma_i=u_{i,1}\cdot \eta_{i,1}\cdot \ldots \cdot \eta_{i,k_i}\cdot u_{i,k_i+1}$, where the $u_{i,j}$ are (possibly trivial) paths in $X$, each $\eta_{i,j}$ is a geodesic in some $Y\in \mc{H}$, and $k_i\leq C$ for all $i$.  Since $\ell_X(\tilde\sigma_i)\to\infty$, some of the $\eta_{i,j}$ must be unbounded.

  Passing to a subseqence, we may assume that for a fixed $j$, the lengths $\ell_X(\eta_{i,j})$ tend to infinity, but the lengths of the prefixes $u_{i,1}\cdot \eta_{i,1}\ldots \cdot u_{i,j}$ are uniformly bounded by a constant $L$.  Set $\varepsilon = 3M+2L$, and let $D=D(\varepsilon,Y_0)$, $N=N(\varepsilon,Y_0)$ be the  constants of acylindricity of the action.  Discarding the first few terms of the sequence, we may assume that the $X$--distance between the endpoints of $\eta_{i,j}$ exceeds $D+2M$ for all $i$.  
Let $\gamma_{i,j}$ be the maximal subpath of $\gamma$ which is contained in the $M$--neighborhood of $\eta_{i,j}$.  Then the length of $\gamma_{i,j}$ is at least $D$.  Moreover, for  all $i$, the initial points of $\gamma_{i,j}$ are at distance at most $2L+2M$.  Let $y$ be a point on $\gamma$ closest to $x_0$ that is an endpoint of  $\gamma_{i,j}$ for some $i$, and let $\gamma'$ be the subpath of $\gamma$ beginning at $y$ having length $D$.

 Let $\mathcal Y\subseteq \mathcal H$ be the subset of $\mathcal H$ such that each $\eta_{i,j}\subset Y$ for some $Y\in \mathcal Y$.  Then $\gamma'\subset \mathcal N_{\varepsilon}(Y)$ for each $Y\in\mathcal Y$.

  Therefore we have 
  \[
  \diam\left(\bigcap_{Y\in\mathcal Y}\mathcal N_{\varepsilon}(Y)\right)\geq D.
  \]
As $\mathcal Y$ is a collection of translates of the $H$--cocompact subset $Y_0$, Theorem \ref{thm:newacyl} implies that $\mathcal Y$ can contain at most $N$ distinct elements. In particular the paths $\eta_{i,j}$ lie in a finite union $\bigcup_{k=1}^NY_k$ of elements of $\mc{H}$.  The paths $\eta_{i,j}$ fellow travel $\gamma$ for longer and longer intervals, so the ray $\gamma$ must limit to a point in $\Lambda(\bigcup_{k=1}^NY_k)$.  The limit set of a finite collection of quasi-convex sets is the union of their limit sets, so $[\gamma]\in \Lambda(Y_k)$ for some fixed $Y_k\in \mc{H}$.
\end{proof}

For the readers convenience we restate the main theorem of this section.
\aqiboundary*
\begin{proof}
  Dowdall--Taylor's Theorem \ref{thm:DTthm} tells us that $\partial \hat X$ is homeomorphic to the subspace $\partial_{u}X$ of $\partial X$ consisting of points represented by quasi-geodesic rays whose projection to $\hat X$ is unbounded.  But Lemma~\ref{lem:bddimpliesY} tells us that the only way for the image of a quasi-geodesic ray to be bounded is for it to limit to a point in $\Lambda(gHg^{-1})$ for some $g\in G$.  
\end{proof}

\subsection{Corollaries}

\begin{corollary}\label{cor:qiembeddedpersists}
Let $(G,X,H)$ be an A/QI triple,  let $K\leq G$ be quasi-isometrically embedded by the action of $G$ on $X$, and suppose $\Lambda(K)\subseteq \partial X-\bigcup_{g\in G}\Lambda(gHg^{-1})$.  Then $K$ is quasi-isometrically embedded by the action of $G$ on $\hat X$.  
\end{corollary}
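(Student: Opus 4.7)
The plan is to establish the quasi-isometric embedding by proving two geometric properties: that the orbit $Kx_0$ is quasi-convex in $\hat X$, and that the orbit map $K \to \hat X$ is metrically proper. Combined with the Lipschitz upper bound (which is automatic, since the cone-off map $X \to \hat X$ is $1$-Lipschitz and $K$ is quasi-isometrically embedded in $X$ by the action), these two properties yield the desired lower bound via a chaining argument: along any $\hat X$-geodesic $\gamma$ from $px_0$ to $qx_0$ one subdivides into $\lceil d_{\hat X}(px_0,qx_0)\rceil$ unit-length pieces and uses quasi-convexity to replace each subdivision point by a nearest element of $Kx_0$; this produces a chain in $K$ of length linear in $d_{\hat X}(px_0,qx_0)$ whose consecutive $\hat X$-distances are uniformly bounded, which by metric properness translates into a linear upper bound on $d_K(p,q)$.

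For quasi-convexity of $Kx_0$ in $\hat X$, I would take an $X$-geodesic between two points of $Kx_0$. Since $K$ is quasi-isometrically embedded in $X$, the orbit $Kx_0$ is quasi-convex in $X$, so this geodesic lies in a bounded $X$-neighborhood of $Kx_0$. By Theorem~\ref{thm:conehyp} it reparametrizes to a uniform $\hat X$-quasi-geodesic between the same endpoints, and the Morse property in $\hat X$ (Theorem~\ref{thm:stability}) then forces any $\hat X$-geodesic between the two points to lie within a bounded $\hat X$-neighborhood of this reparametrization, hence within a bounded $\hat X$-neighborhood of $Kx_0$.

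The main obstacle is metric properness: showing that $\{k \in K : d_{\hat X}(x_0, kx_0) \leq R\}$ is finite for every $R > 0$. Since $K$ is quasi-isometrically embedded in the hyperbolic space $X$, it is itself a finitely generated hyperbolic group, and the orbit map extends equivariantly to a topological embedding $\partial K \hookrightarrow \Lambda(K) \subseteq \partial X$. Suppose for contradiction that there were distinct $k_n \in K$ with $d_{\hat X}(x_0, k_n x_0) \leq R$; then quasi-isometric embeddedness in $X$ forces $d_X(x_0, k_n x_0) \to \infty$, and using the compactness of $K \cup \partial K$, I pass to a subsequence and obtain $k_n x_0 \to \psi_X(\xi)$ in $X \cup \partial X$ for some $\xi \in \partial K$, where $\psi_X(\xi) \in \Lambda(K)$. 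The hypothesis $\Lambda(K) \cap \bigcup_g \Lambda(gHg^{-1}) = \emptyset$ together with Theorem~\ref{thm:aqiboundary} places $\psi_X(\xi)$ in $\partial_u X$, whereupon the continuity of the extended cone-off map $X \cup \partial_u X \to \hat X \cup \partial \hat X$ supplied by Theorem~\ref{thm:DTthm} forces $k_n x_0$ to converge to a point of $\partial \hat X$. In particular $d_{\hat X}(x_0, k_n x_0) \to \infty$, contradicting the bound $\leq R$.
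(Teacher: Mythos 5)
Your proposal is correct and follows essentially the same route as the paper: quasi-convexity of $Kx_0$ in $\hat X$ via Theorem~\ref{thm:conehyp} and the Morse property, and metric properness of the orbit map via Theorem~\ref{thm:aqiboundary} together with the continuous extension of the cone-off map from Theorem~\ref{thm:DTthm}. The only cosmetic difference is that you carry out the final Milnor--Schwartz-style chaining by hand, whereas the paper simply cites the Milnor--Schwartz lemma for quasi-geodesic spaces \cite[Proposition~5.4.1]{Loh} to conclude that the orbit map is a quasi-isometry onto $Kx_0$.
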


The proof of this corollary will make use of a version of the Milnor--Schwartz Lemma which applies to quasi-geodesic spaces.  A metric space is \emph{quasi-geodesic} if there exists some $\tau\ge 1$ so that any two points are joined by a $\tau$--quasi-geodesic.  

\begin{lemma}[{\cite[Proposition~5.4.1]{Loh}}]
If a group $G$ acts coboundedly and metrically properly on a quasi-geodesic metric space $Z$, then $G$ is finitely generated and for any $z\in Z$, the orbit map $g\mapsto gz$ is a quasi-isometry.
\end{lemma}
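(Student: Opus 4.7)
The plan is to run the standard Milnor--Schwarz argument, but with the quasi-geodesic connecting $z$ to $gz$ replacing the geodesic one would use in the geodesic case. Fix $z\in Z$, let $\tau\ge 1$ be such that every pair of points in $Z$ is joined by a $\tau$--quasi-geodesic, and let $D>0$ be a coboundedness constant, so $Z=\mathcal{N}_D(Gz)$. Set $R=2D+3\tau$, and define
\[
S=\{g\in G\setminus\{1\}\mid d_Z(z,gz)\le R\}.
\]
By metric properness of the action (applied to a closed ball around $z$), the set $S$ is finite. I would then simultaneously prove that $S$ generates $G$ \emph{and} establish the estimate $|g|_S\le \tau\, d_Z(z,gz) + (\tau^2+1)$ by a single quasi-geodesic subdivision.

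Given $g\in G$, choose a $\tau$--quasi-geodesic $\gamma\co[0,T]\to Z$ with $\gamma(0)=z$ and $\gamma(T)=gz$. Pick sample times $0=t_0<t_1<\dots<t_n=T$ with $t_i-t_{i-1}\le 1$ and $n\le T+1$. Since $\gamma$ is a $\tau$--quasi-geodesic, $d_Z(\gamma(t_{i-1}),\gamma(t_i))\le 2\tau$. Using coboundedness, select $g_i\in G$ with $d_Z(\gamma(t_i),g_iz)\le D$, and arrange $g_0=1$, $g_n=g$. Then
\[
d_Z(g_{i-1}z,g_iz)\le 2D+2\tau<R,
\]
so $s_i:=g_{i-1}^{-1}g_i\in S\cup\{1\}$, and $g=s_1s_2\cdots s_n$, witnessing that $S$ generates $G$ and that $|g|_S\le n\le T+1$. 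The quasi-isometric embedding inequality bounds $T$ by $\tau\, d_Z(z,gz)+\tau^2$, yielding
\[
|g|_S\le \tau\, d_Z(z,gz)+(\tau^2+1).
\]
Conversely, if $g=s_1\cdots s_m$ with $s_i\in S$ and $m=|g|_S$, a telescoping triangle inequality using $G$--equivariance of $d_Z$ gives $d_Z(z,gz)\le mR = R\cdot |g|_S$. Together these two estimates say that the orbit map $(G,d_S)\to (Z,d_Z)$, $g\mapsto gz$, is a quasi-isometric embedding; coboundedness makes it coarsely surjective, hence a quasi-isometry.

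The only nonstandard point is that a $\tau$--quasi-geodesic in the sense of Definition~\ref{def:quasi-geodesic} need not be continuous, so one cannot literally ``walk along $\gamma$ in increments of length $D$''; this is the step where I expect one must be careful. The workaround is to sample $\gamma$ at parameter-spaced times and exploit the uniform Lipschitz-like upper bound $d_Z(\gamma(s),\gamma(t))\le \tau|s-t|+\tau$, which guarantees consecutive samples are within $2\tau$ regardless of continuity. Finally, finiteness of $S$ uses metric properness in the form that $\{g\in G\mid d_Z(z,gz)\le R\}$ is finite, which is how metric properness of an isometric action is usually phrased.
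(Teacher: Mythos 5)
Your argument is correct: the paper does not prove this lemma but simply cites it from L\"oh, and your proof is the standard \v{S}varc--Milnor argument adapted to quasi-geodesic spaces exactly as in that reference (subdivide a $\tau$--quasi-geodesic by parameter, push the sample points onto the orbit via coboundedness, and read off both generation and the two-sided distance comparison). Your care about discontinuity of quasi-geodesics, handled by sampling at unit parameter increments and using the upper bound $d_Z(\gamma(s),\gamma(t))\le\tau|s-t|+\tau$, is precisely the right fix.
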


\begin{proof}[Proof of Corollary \ref{cor:qiembeddedpersists}]
  We will apply the Milnor--Schwartz Lemma with $Z=Kx_0\subseteq \hat X$, from which the result is immediate.   It is clear that $K$ acts cobounded on $Kx_0$, so it remains to show that $(Kx_0, d_{\hat{X}})$ is quasi-geodesic and that $K$ acts metrically properly.

  By Theorem~\ref{thm:conehyp}, geodesics in $X$ can be reparametrized to be $\tau$--quasi-geodesics in $\hat{X}$ for some fixed $\tau\ge 1$.  Let $M$ be the Morse constant for $\tau$--quasi-geodesics in $\hat{X}$, and let $\lambda$ be the quasi-convexity constant of $Kx_0$ in $X$.  
  
We first show that $(Kx_0, d_{\hat{X}})$ is a quasi-geodesic metric space.  As the reader may verify, it suffices to show that $Kx_0$ is quasi-convex in $\hat{X}$.  Let $x,y$ be points of $Kx_0$, and let $\beta$ be a geodesic of $\hat{X}$ joining them.  Let $z\in \beta$, and let $\alpha$ be an $X$--geodesic joining $x$ to $y$.  Since $\alpha$ is (after reparametrizing) a $\tau$--quasi-geodesic in $\hat{X}$, there is a point $z'$ of $\alpha$ so that $d_{\hat{X}}(z,z')\le M$.  Since $\alpha$ is a geodesic in $X$, there is a point $z''\in K x_0$ so that $d_X(z',z'')\le \lambda$.  So
\[d_{\hat{X}}(z,K x_0)\le d_{\hat{X}}(z,z') + d_{\hat{X}}(z',z'') \le d_{\hat{X}}(z,z') + d_X(z',z'') \le M + \lambda,\]
and so $K x_0$ is $(M+\lambda)$--quasi-convex in $\hat{X}$.

To show that $K$ acts metrically properly on $Kx_0\subseteq \hat X$, let $B\subset Kx_0$ be a finite diameter subset, and suppose toward a contradiction that there exists an infinite collection of elements $k_i\in K$ such that $k_i B\cap B\neq \emptyset$.  Since $K$ is quasi-isometrically embedded by the action of $G$ on $X$, the points $k_ix_0$ do not stay in any bounded subset of $X$.
After passing to a subsequence, we may assume the points $k_ix_0\in X$ converge to a point  $\xi \in \Lambda(K)\subseteq \partial X$.   The set $\{k_ix_0\}$ is bounded in $\hat X$, so the point $\xi$ cannot persist in $\partial \hat X$.  However, this contradicts Theorem~\ref{thm:aqiboundary}, as  $\Lambda(K)\subseteq \partial X-\bigcup_{g\in G}\Lambda(gH)$.
\end{proof}

We now use the description of the boundary of $\hat X$ given in Theorem~\ref{thm:aqiboundary} to understand the action of $G$ on $\hat X$. 

If a group $G$ acts by isometries on a Gromov hyperbolic metric space $Y$ with basepoint $y_0$, then  $g\in G$ is \emph{elliptic} if some (equivalently any) orbit in $Y$ is bounded, \emph{loxodromic} if the map $\mathbb Z\to Y$ defined by $n\mapsto g^ny_0$ is a quasi-isometric embedding, and \emph{parabolic} if it is neither elliptic nor loxodromic.  One may also use limit sets in $\partial Y$ to distinguish between these types of isometries: an element $g\in G$ is elliptic, parabolic, or loxodromic if $\#\Lambda(\langle g\rangle)=0,1,$ or $2$, respectively.  If $\#\Lambda(\langle g\rangle)=2$, then we let $\Lambda(\langle g\rangle)=\{g^{\pm \infty}\}$.  In this case, there is a quasi-geodesic axis in $Y$ limiting to $g^{\pm\infty}$, and $g$ acts as translation along this axis.

As $X$ is a hyperbolic space, every element of $G$ is either elliptic, loxodromic, or parabolic with respect to $G\curvearrowright X$.  Since the canonical map from $X$ to $\hat X$ is 1--Lipschitz and $G$--equivariant, if $g\in G$ is elliptic with respect to $G\curvearrowright X$, then $g$ is also elliptic with respect to $G\curvearrowright \hat X$.  It remains to consider elements of $G$ that are parabolic and loxodromic with respect to $G\curvearrowright X$.  

\begin{corollary}  \label{cor:loxell}
  Suppose $(G,X,H)$ is an A/QI triple, and let $\mathcal{H}$, $\hat{X}$ be as in Theorem~\ref{thm:aqiboundary}.
  \begin{enumerate}
  \item\label{item:loxodromic} If $g\in G$ acts loxodromically on $X$, then either it acts loxodromically on $\hat{X}$ or a power of $g$ stabilizes some $Y\in \mathcal{H}$, and so $g$ acts elliptically on $\hat{X}$.
  \item\label{item:parabolic} An element $g\in G$ acts parabolically on $X$ if and only if it acts parabolically on $\hat{X}$.
  \end{enumerate}
\end{corollary}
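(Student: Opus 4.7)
My plan is to establish (\ref{item:loxodromic}) by classifying based on whether both fixed points of the loxodromic $g$ lie in $\partial_u X$, and to establish (\ref{item:parabolic}) using (\ref{item:loxodromic}) for the easy direction together with a three-step argument for the harder direction.

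For (\ref{item:loxodromic}), suppose $g$ acts loxodromically on $X$ with fixed points $g^{\pm \infty}\in\partial X$. If both $g^{\pm\infty}\in\partial_u X$, then since the homeomorphism $\psi$ from Theorem~\ref{thm:DTthm} is $G$-equivariant, both $\psi(g^{\pm\infty})$ are distinct fixed points of $g$ in $\partial\hat X$, so $g$ is loxodromic on $\hat X$. Otherwise, assume $g^{+\infty}\notin\partial_u X$, so by Theorem~\ref{thm:aqiboundary}, $g^{+\infty}\in\Lambda(Y)$ for some $Y\in\mc{H}$. Since $g\cdot g^{+\infty}=g^{+\infty}$, every $g^nY$ has $g^{+\infty}$ in its limit set, and quasi-convexity of elements of $\mc{H}$ ensures that a geodesic ray to $g^{+\infty}$ eventually lies in a uniform $M$-neighborhood of each $g^nY$. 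Thus $\bigcap_n \mathcal N_M(g^nY)$ has infinite diameter, and Theorem~\ref{thm:newacyl} bounds the number of distinct $g^nY$, forcing $g^{n_0}Y=Y$ for some $n_0\neq 0$. Since any two points of $Y$ are at $\hat X$-distance at most $1$, the power $g^{n_0}$ has bounded $\hat X$-orbit on $Y$, hence $g$ is elliptic on $\hat X$.

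For the ``if'' direction of (\ref{item:parabolic}), I note that $g$ parabolic on $\hat X$ immediately rules out $g$ elliptic on $X$ (since $X\to\hat X$ is $1$-Lipschitz and $G$-equivariant) and $g$ loxodromic on $X$ (since (\ref{item:loxodromic}) would force $g$ elliptic or loxodromic on $\hat X$), so $g$ is parabolic on $X$. For the ``only if'' direction, let $g$ be parabolic on $X$ with unique fixed point $p\in\partial X$. I first show $p\in\partial_u X$: if not, the same acylindricity argument used in (\ref{item:loxodromic}) produces $g^{n_0}\in\Stab_G(Y)$ for some $Y\in\mc{H}$. The acylindricity of $G$ along $H$ restricts to an acylindrical action of $\Stab_G(Y)$ on the hyperbolic space $Y$, so Bowditch's theorem forbids parabolics there, making $g^{n_0}|_Y$ elliptic or loxodromic; the former gives a bounded $X$-orbit and the latter two distinct fixed points in $\partial Y\hookrightarrow \partial X$, each contradicting $g^{n_0}$ parabolic on $X$. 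With $p\in\partial_u X$ established, fixed points of $g$ in $\partial\hat X$ correspond via $\psi^{-1}$ to fixed points in $\partial_u X\subseteq\partial X$, of which $g$ has at most one, so $g$ is not loxodromic on $\hat X$.

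The main obstacle is showing $g$ is not elliptic on $\hat X$. Assume for contradiction that $d_{\hat X}(x_0, g^n x_0)\leq C$ for all $n$. I apply Lemma~\ref{lem:embedded} to obtain $\omega$-quasi-geodesics $\sigma_n$ in $\hat X$ of length at most $\omega C+\omega$ with nice de-electrifications $\tilde \sigma_n$ in $X$. The crucial observation is that the total $X$-length of the non-cone pieces of $\tilde \sigma_n$ is bounded by $\omega C+\omega$, while $\ell_X(\tilde\sigma_n)\geq d_X(x_0,g^nx_0)\to\infty$ by parabolicity; since the number of pieces of $\tilde\sigma_n$ is uniformly bounded, some cone piece $\eta_n\subset Y_n\in\mc{H}$ has $X$-length tending to infinity. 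Morse stability places $\eta_n$ close to a long subsegment of a fixed $\tau_0$-quasi-geodesic ray $\gamma_p$ from $x_0$ to $p$ (existing by Remark~\ref{rem:visibleboundary}); choosing $\eta_n$ as the earliest long cone piece lets me arrange that these subsegments all start at uniformly bounded parameter and cover intervals of $\gamma_p$ of unbounded length. Applying Theorem~\ref{thm:newacyl} to the $Y_n$, via the resulting large-diameter common neighborhoods, yields only finitely many distinct $Y_n$; passing to a subsequence, $Y_n=Y^*$ is fixed, so $Y^*$ contains points approaching $p$ along $\gamma_p$, and quasi-convexity gives $p\in\Lambda(Y^*)$, contradicting $p\in\partial_u X$. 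The delicate bookkeeping required to arrange overlapping coverage of $\gamma_p$ by the $Y_n$'s is the technical heart of this final step.
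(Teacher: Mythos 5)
Your overall strategy for part (\ref{item:loxodromic}) and the first half of part (\ref{item:parabolic}) matches the paper's, but you diverge in two places, both legitimately. First, to rule out a parabolic power stabilizing some $Y\in\mc{H}$, the paper observes that $Y$ is a uniformly locally finite graph and so admits no parabolic stabilizing isometry, whereas you restrict the acylindrical action to $\operatorname{Stab}_G(Y)\acts Y$ and invoke the standard fact (Bowditch/Osin) that acylindrical actions on hyperbolic spaces have no parabolics; your route imports an external theorem but is arguably cleaner, since it avoids justifying local finiteness. Second, and more substantially, for the hardest step (a parabolic $g$ on $X$ with fixed point $p\in\partial_u X$ is not elliptic on $\hat X$), the paper gives a short boundary-dynamics argument: it takes $q=h(p)$ for a loxodromic $h\in H$ (so $q\neq p$ and $q\in\partial_u X$), notes that $g^iq\to p$ in $\partial\hat X$, and derives a contradiction with bounded $\hat X$--orbits via Gromov products. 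You instead rerun the de-electrification argument of Lemma~\ref{lem:bddimpliesY}, adapted from quasi-geodesic rays to the (non-quasi-geodesic) parabolic orbit $\{g^nx_0\}$, locating long cone pieces that fellow-travel a fixed ray $\gamma_p$ and applying Theorem~\ref{thm:newacyl}. This works because $(g^nx_0\mid p)_{x_0}\to\infty$ guarantees the fellow-traveling window grows, but it is considerably more technical than the paper's argument, which you might prefer to adopt.

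One point needs care in part (\ref{item:loxodromic}): your assertion that $\bigcap_n\mc{N}_M(g^nY)$ has infinite diameter is not correct as stated over all $n\in\mathbb{Z}$. If $g^{+\infty}\in\Lambda(Y)$ and $\rho_0\subset Y$ is a ray to $g^{+\infty}$, then $g^n\rho_0$ starts near $\alpha_g(T_n)$ with $T_n\to+\infty$, so for large positive $n$ the set $\mc{N}_M(g^nY)$ need only contain a tail of the axis that recedes to $g^{+\infty}$; the full intersection can be bounded. You must restrict to powers of one sign (here $n\le 0$, so the translated rays all cover a common unbounded tail of the axis), exactly as the paper does via equivariance in its Claim. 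The same caveat applies when you transplant this argument to the parabolic case, where (as in the paper) one should work with finite subcollections whose $\varepsilon$--neighborhoods share a long common segment and then apply pigeonhole. Also, your deduction that two fixed points in $\partial\hat X$ force $g$ to be loxodromic on $\hat X$ should be routed through the continuity statement of Theorem~\ref{thm:DTthm} (so that $\psi(g^{\pm\infty})$ are genuinely limit points of the orbit $\{g^nx_0\}$ in $\hat X$), since an elliptic isometry can fix boundary points; alternatively, quote Corollary~\ref{cor:qiembeddedpersists} as the paper does. These are repairable gaps rather than fatal errors.
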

\begin{proof}
  Suppose that $g$ acts loxodromically on $X$.  Then $\langle g\rangle$ is quasi-isometrically embedded by the action of $G$ on $X$.  If each of two fixed points of $g$ in $\partial X$ avoids $\Lambda(Y)$ for all $Y\in\mc H$, then $\langle g\rangle$ is quasi-isometrically embedded by the action of $G$ on $\hat X$ by Corollary \ref{cor:qiembeddedpersists}, hence $g$ acts loxodromically on $\hat X$.

Thus we may assume that at least one of the fixed points of $g$ lies in the limit set of an element $Y\in \mc H$.   Up to  replacing $g$ by $g^{-1}$, we can assume that $\Lambda(Y)$ contains the repelling fixed point $g^{-\infty}\in \partial X$.  The following claim implies statement~\eqref{item:loxodromic} of the corollary.
  \begin{claim}
    \label{claim:lox to ell}  Some power of $g$ stabilizes $Y$.
  \end{claim}
  \begin{proof}
    Let $\alpha_g\from \bR\to X$ be a quasi-geodesic axis for $g$.
    Let $y_0\in Y$ be a closest vertex to $\alpha_g(0)$, and let $\gamma_0$ be a quasi-geodesic ray lying in $Y$ and limiting to $g^{-\infty}$.  By quasi-geodesic stability, there is some $B>0$ so that $d_X(\alpha_g(0),y_0)\le B$ and the quasi-geodesic rays $\gamma_0$ and $\alpha_g|_{(-\infty,0]}$ are Hausdorff distance at most $B$ from one another.

Fix $\varepsilon=B$, and let $D=D(\varepsilon,Y)$ and $N=N(\varepsilon,Y)$ be the constants  of acylindricity of the action.

    For any integer $i\ge 0$, there is a $T_i\ge 0$ so that $\alpha_g(T_i) = g^i\alpha_g(0)$, and we may assume the $T_i$ tend monotonically to infinity.  By equivariance, $\alpha_g|_{(-\infty,T_i]}$ is Hausdorff distance at most $B$ from $\gamma_i = g^i\gamma_0 \subseteq g^i Y$.  All these rays tend to $g^{-\infty}$ and lie in the $B$--neighborhood of $\alpha_g$.
    
    In particular, $\alpha_g|_{(-\infty,T_0]}\subset \mathcal N_B(\gamma_i)\subseteq \mathcal N_B(g^iY)$ for each $i\geq 0$.  Therefore,
    \[
    \diam\left(\bigcap_{i=1}^\infty\mathcal N_B(g^iY)\right)>D.
    \]
    By Theorem \ref{thm:newacyl}, the collection $\{g^iY\}$ contains at most $N$ distinct elements.  

This implies that there is some $j'\neq j$ such that $g^{j'} Y=g^{j}Y$.  Therefore, $g^{j'-j}$ stabilizes the subgraph $Y$.
  \end{proof}

  We next deal with parabolic elements.  If $g\in G$ acts parabolically on $\hat{X}$, then it has unbounded orbits in $X$, so it cannot act elliptically on $X$.  But it also cannot act loxodromically, by the first statement.  The only remaining possibility is that it acts parabolically.

Conversely, suppose that $g$ acts parabolically on $X$.  Then $g$ fixes a single point $p\in \partial X$.
\begin{claim} \label{claim:fp in imphi}
  The fixed point $p$ of $g$ lies in $\partial_{u}X$.
\end{claim}
\begin{proof}
  We argue by contradiction, supposing that $p$ is not in $\partial_{u}X$.  By Theorem~\ref{thm:aqiboundary}, this means that $p\in \Lambda(Y)$ for some $Y\in \mc{H}$.  The proof proceeds similarly to the proof of Claim \ref{claim:lox to ell} with the following changes:  we no longer have a quasi-axes $\alpha_g$, but we instead simply choose some $y_0\in Y$ and consider a $K$--quasi-geodesic ray $\gamma_0$ in $Y$ from $y_0$ to $p$.  For some infinite subset $J\subseteq \mathbb Z_{\ge 0}$ and some fixed $B>0$ as before, there are numbers $T_j$ tending monotonically to infinity so that for all $t\geq T_j$, $g^j\gamma_0(t)=\gamma_j(t)$ is contained in the $B$--neighborhood of $\gamma_0$.  
  
  Arguing as in the proof of Claim \ref{claim:lox to ell}, we conclude that there is some $j'\neq j$ such that $g^{j'-j}$ stabilizes the subgraph $Y$.  However, since $Y$ is a uniformly locally finite graph, it cannot be stabilized by a parabolic isometry.  Thus we reach a contradiction, and we conclude that $p$ lies in $\partial_{u}X$.
\end{proof}

Let $q=h(p)$ for some loxodromic element $h\in H$, and note that $q$ is in $\partial_{u}X$.  The points $\{g^i q\mid i\in \mathbb Z\}$ accumulate on $p$ in $\partial \hat X$.  If $g$ had a bounded orbit in $\hat{X}$, this would be impossible. (To see this, consider the image of a $\tau_0$--quasi-geodesic joining $p$ to $q$.) Thus $g$ acts parabolically on $\hat X$.
\end{proof}

There are several similar results in the literature for cone-offs, with varying conditions on $X$ and the subgroup $H$, for example \cite[Theorem~1.14]{Osin06} and \cite[Proposition~6.5]{ABO}, which both conclude that if $g$ is loxodromic on $X$ then it is either loxodromic on the cone-off or conjugate into $H$.  In both of these results, there is some hypothesis of properness, which allows the slightly stronger conclusion.  

We now consider actions $G\curvearrowright X$ which contain loxodromic WPD elements (see Definition~\ref{def:WPD}).  We show that if $g$ is WPD with respect to $X$, and if it is still loxodromic for the action on the cone-off $\hat X$, then it is WPD with respect to $\hat X$.  The proof will use the notion of a WWPD element, introduced by Bestvina--Bromberg--Fujiwara in \cite{BestvinaBrombergFujiwaraSCL}.  The definition we use here is due to Handel--Mosher in \cite{HandelMosher:WWPD}: a loxodromic element $g\in G$ is \emph{WWPD} if the  $G$--orbit of $(g^\infty,g^{-\infty})$ is a discrete subset of $\partial X\times\partial X\setminus \Delta$, where $\Delta$ is the diagonal subset.  A loxodromic element $g$ is WPD if and only if it is WWPD and the stabilizer of $\{g^\infty,g^{-\infty}\}$ in $G$ is virtually cyclic \cite[Corollary~2.4]{HandelMosher:WWPD}.

 The following proposition is related to~\cite[Theorem~2.4]{MMS20}, but has slightly different hypotheses.
\begin{proposition}\label{prop:WPDpersists}
  Let $(G,X,H)$ be an A/QI triple.  If $g\in G$ is a loxodromic WPD element with respect to the action of $G$ on $X$ and no power of $g$ stabilizes any $Y\in \mc{H}$, then $g$ is a loxodromic WPD element with respect to the action on $\hat X$.
\end{proposition}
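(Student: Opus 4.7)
The plan is to apply the characterization of WPD in terms of WWPD due to Handel--Mosher~\cite[Corollary~2.4]{HandelMosher:WWPD}, already cited in the preceding discussion: a loxodromic element is WPD if and only if it is WWPD and its centralizer in $G$ is virtually cyclic. Since $g$ is WPD on $X$, its centralizer in $G$ is virtually cyclic, and this group-theoretic condition does not depend on the action. It therefore suffices to show that $g$ is WWPD with respect to $G \curvearrowright \hat X$, i.e., that the $G$--orbit of $(g^{+\infty}_{\hat X}, g^{-\infty}_{\hat X}) \in \partial \hat X \times \partial \hat X$ is discrete off the diagonal.

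By hypothesis, no power of $g$ stabilizes any $Y \in \mathcal H$, so Corollary~\ref{cor:loxell}(\ref{item:loxodromic}) gives that $g$ acts loxodromically on $\hat X$. The contrapositive of Claim~\ref{claim:lox to ell} (applied to both $g$ and $g^{-1}$) further shows that $g^{\pm\infty} \in \partial_u X \subseteq \partial X$. Under the $G$--equivariant homeomorphism $\psi : \partial_u X \to \partial \hat X$ of Theorem~\ref{thm:DTthm}, the attracting and repelling fixed points of $g$ acting on $\hat X$ are precisely $\psi(g^{\pm\infty})$. Since $\mathcal H$ is $G$--invariant, the hypothesis transfers to every conjugate of $g$: a power of $hgh^{-1}$ stabilizes some $Y \in \mathcal H$ if and only if a power of $g$ stabilizes $h^{-1}Y \in \mathcal H$. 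Hence $h(g^{\pm\infty}) \in \partial_u X$ for every $h \in G$, and the entire orbit $G \cdot (g^{+\infty}, g^{-\infty})$ lies in $(\partial_u X \times \partial_u X) \setminus \Delta$.

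Since $g$ is WPD (hence WWPD) on $X$, this orbit is discrete as a subset of $(\partial X \times \partial X) \setminus \Delta$. Discreteness of a subset is a property of its subspace topology alone, so the orbit is also discrete in the subspace $(\partial_u X \times \partial_u X) \setminus \Delta$. The equivariant product map $\psi \times \psi$ is a homeomorphism onto $(\partial \hat X \times \partial \hat X) \setminus \Delta$, and transports this discreteness, establishing WWPD, and therefore WPD, on $\hat X$. The main conceptual point is leveraging the Dowdall--Taylor homeomorphism together with the observation that the WPD condition splits into a purely topological part (WWPD) and a purely group-theoretic part (virtually cyclic centralizer), only the former of which depends on the space; the one subtlety to check carefully is that the hypothesis about $\mathcal H$--stabilizers is conjugation invariant, which is immediate from the $G$--invariance of $\mathcal H$.
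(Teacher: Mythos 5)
Your proposal is correct and follows essentially the same route as the paper's proof: reduce to WWPD via the Handel--Mosher characterization (centralizer virtually cyclic being action-independent), use Corollary~\ref{cor:loxell} to get loxodromicity on $\hat X$ with fixed points in $\partial_u X$, and transport discreteness of the orbit of $(g^{\infty},g^{-\infty})$ through the Dowdall--Taylor homeomorphism. The only difference is that you spell out the (routine) verification that the whole $G$--orbit lies in $\partial_u X\times\partial_u X$, which the paper leaves implicit via the $G$--invariance of $\mc{H}$ and equivariance of $\psi$.
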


\begin{proof}
  By Corollary~\ref{cor:loxell}, the element $g$ acts loxodromically on $\hat X$ and the fixed points of $g$ in $\partial X$ lie in $\partial_{u}X$.  Since $g$ is WWPD, the $G$--orbit of $(g^\infty,g^{-\infty})$ is a discrete subset of $\partial X\times \partial X\setminus \Delta$.  This orbit is therefore discrete in $\partial \hat X\times \partial \hat X\setminus \Delta\cong \partial_{u}X\times\partial_{u} X\setminus \Delta$.  Therefore $g$ is a WWPD element with respect to the action of $G$ on $\hat X$.  Since $g$ is WPD on $X$,  the stabilizer of $\{g^\infty,g^{-\infty}\}$ with respect to the action of $G$ on $\partial X$ is virtually cyclic.  Moreover, the stabilizer of the pair $\{g^{\infty},g^{-\infty}\}$ does not change when we consider the action of $G$ on $\partial \hat X\cong \partial X - \bigcup_{g\in G}\Lambda(gH)$.  Therefore $g$ is WPD with respect to the action on $\hat X$, as desired.
\end{proof}

\begin{corollary} \label{cor:wpd}
  Let $G, X, \mathcal{H},$ and $\hat X$ be as in Theorem~\ref{thm:aqiboundary}, and suppose additionally that $G\curvearrowright X$ is a WPD action.  If there is a loxodromic isometry in the action of $G$ on $\hat X$, then $G\curvearrowright\hat X$ is a WPD action.
\end{corollary}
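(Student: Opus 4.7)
The plan is to verify the three requirements of a WPD action (Definition \ref{def:WPD}) for $G\acts \hat X$. Non-virtual-cyclicity of $G$ is inherited from the hypothesis that $G\acts X$ is WPD, and existence of a loxodromic isometry on $\hat X$ is given by assumption. So the content of the proof is to show that every element $g\in G$ that acts loxodromically on $\hat X$ is a WPD element for $G\acts\hat X$.

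Fix such a $g$. The first step is to determine how $g$ acts on the original space $X$. Since the cone-off map $X\to\hat X$ is $G$-equivariant and $1$-Lipschitz, any element that acts elliptically on $X$ acts elliptically on $\hat X$; in particular, $g$ is not elliptic on $X$. By Corollary \ref{cor:loxell}(2), $g$ is not parabolic on $X$ either, since parabolicity on $X$ is equivalent to parabolicity on $\hat X$ and $g$ is loxodromic on $\hat X$. Therefore $g$ acts loxodromically on $X$, and because $G\acts X$ is a WPD action, $g$ is automatically a loxodromic WPD element with respect to $X$.

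The next step is to apply Proposition \ref{prop:WPDpersists}, which additionally requires that no power of $g$ stabilizes any $Y\in \mathcal{H}$. This hypothesis is once more supplied by loxodromicity on $\hat X$: Corollary \ref{cor:loxell}(1) gives two alternatives for a loxodromic element on $X$, namely that it acts loxodromically on $\hat X$ or that a power of it stabilizes some $Y\in\mathcal{H}$, in which case $g$ acts elliptically on $\hat X$. These alternatives are mutually exclusive, so the fact that $g$ is loxodromic on $\hat X$ rules out the second. Proposition \ref{prop:WPDpersists} then yields that $g$ is a loxodromic WPD element for $G\acts\hat X$, completing the verification.

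There is no genuine obstacle here; the argument is a careful assembly of already-established machinery. The one place that requires attention is the \emph{direction} in which the dichotomies of Corollary \ref{cor:loxell} are applied: both parts are used to transport information about $g$'s action on $\hat X$ back to information about its action on $X$, and in each case one must check that the two alternatives provided by the corollary are mutually exclusive (they are, because an isometry cannot be simultaneously loxodromic and elliptic, nor simultaneously loxodromic and parabolic).
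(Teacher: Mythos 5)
Your proof is correct and is essentially the argument the paper intends: the corollary is left as an immediate consequence of Corollary~\ref{cor:loxell} (which shows any element loxodromic on $\hat X$ is loxodromic on $X$ and that no power of it stabilizes any $Y\in\mathcal H$) together with Proposition~\ref{prop:WPDpersists}, which is exactly how you assemble it. The attention you pay to the direction of the dichotomies and their mutual exclusivity is the right (and only) point needing care.
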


\section{Examples}\label{sec:examples}

In this section, we use the combination theorem presented in Section \ref{sec:finiteheight} to give examples of A/QI triples in $\OutFn$, the Cremona group, and FC-type Artin groups (see Examples \ref{ex:outfn}, \ref{ex:cremona}, and \ref{ex:artin}).   All of these groups are acylindrically hyperbolic, but while we have natural hyperbolic spaces on which each group acts with loxodromic WPD elements, these actions are not known to be acylindrical (and in several cases are known not to be acylindrical). Constructing A/QI triples in these situations provides subspaces along which the action of the group is  acylindrical.  

We first consider the group $\OutFn$, which acts by isometries on several hyperbolic complexes, including the free factor complex, the cyclic splitting complex, and the free splitting complex.  We refer the reader to \cite{HandelMosher:FreeSplitting, KapovichRafi, Mann} for details on these complexes and the actions.  Fully irreducible elements are loxodromic WPD elements with respect to the action on all three complexes.  These are the only loxodromic isometries of the action on the free factor complex, and so this is a WPD action.  However, it is not known if this action is acylindrical.  It is also not known if the action on the cyclic splitting complex is WPD or acylindrical. On the other hand, Handel and Mosher \cite{HandelMosher:FreeSplitting} show that there are loxodromic isometries of the free splitting complex which are not WPD elements, and so this action is neither WPD nor acylindrical. However, in all cases, we are able to construct A/QI triples using Corollary~\ref{prop:WPDtoAQI}.  As noted in Remark~\ref{rem:notHE}, the subgroups  constructed will typically \emph{not} be hyperbolically embedded.

\begin{example}[$\OutFn$] \label{ex:outfn}
Let $G=\OutFn$,  let $h_1,\dots, h_k\in G$ be independent fully irreducible elements, and let $X$ be the free factor complex, the cyclic splitting complex, or the free splitting complex.  Then $h_1,\dots, h_k$ are independent loxodromic WPD elements with respect to the action on $X$.  By Corollary~\ref{prop:WPDtoAQI}, there is a constant $D$ such that for any $d_1,\dots, d_m\geq D$, if $H=\langle h_1^{d_1},\dots, h_k^{d_k}\rangle$, then $(G,X,H)$ is an A/QI triple. 
\end{example}

A subgroup of $\OutFn$ is called \emph{convex cocompact} if it is quasi-isometrically embedded in the free factor complex via the orbit map.  Thus all the subgroups in Example \ref{ex:outfn} are convex cocompact.  These subgroups are known to be stable by \cite{AougabDurhamTaylor}.

We now turn our attention to the Cremona group.
 
\begin{example}[Cremona group] \label{ex:cremona}
The $2$--dimensional Cremona group $\operatorname{Bir}(\mathbb P^2_k)$ over an algebraically closed field $k$ is the group of birational transformations of the projective space $\mathbb P^2_k$.  We note that $\operatorname{Bir}(\mathbb P^2_k)$ is not a finitely generated group; in fact, it is uncountable.  In \cite{CantatLamy}, Cantat and Lamy construct an action of $\operatorname{Bir}(\mathbb P^2_k)$  on an 
 infinite-dimensional hyperbolic space $\mathbb H_{\overline{\mathcal Z}}$, which is inspired by work of Manin \cite{Manin} and Zariski \cite{Zariski}.    Cantat and Lamy also introduce the notion of a \emph{tight} loxodromic isometry of $\mathbb H_{\overline{\mathcal Z}}$; see \cite[Section~2.3.3]{CantatLamy}.  An element $g\in \operatorname{Bir}(\mathbb P^2_k)$ is  WPD if and only if some positive iterate $g^m$ of $g$ is a tight loxodromic isometry \cite[Remark~7.5]{Cantat}.  (Lonjou also constructs WPD elements in the case that $k$ is not algebraically closed \cite{Lonjou}.)

Let $G=\operatorname{Bir}(\mathbb P^2_k)$, and let $h_1,\dots, h_k\in G$ be independent tight elements.  Then $h_1,\dots,h_n$ are independent loxodromic WPD isometries for the action on  $\mathbb H_{\overline{\mathcal Z}}$. By Corollary \ref{prop:WPDtoAQI}, there is a constant $D$ such that for any $d_1,\dots, d_n\geq D$ and $H=\langle h_1^{d_1},\dots, h_n^{d_n}\rangle$, we have that $(G,\mathbb H_{\overline{\mathcal Z}},H)$ is an A/QI triple.  Since $G$ is not finitely generated, there is not a notion of a stable subgroup, but Corollary \ref{prop:finiteheight} implies that $H$ has finite height.
\end{example}

For our final example, which is more involved than the previous examples, we consider FC-type Artin groups. 
\begin{example}[FC-type Artin groups]\label{ex:artin}
 Let $\Gamma$ be a finite graph with vertex set $V(\Gamma)$ and edge set $E(\Gamma)\subset V(\Gamma)\times V(\Gamma)$.  Suppose that each edge $(s,t)\in E(\Gamma)$ is labeled with an integer $m(s,t)\ge 2$.  The \emph{Artin group} $A(\Gamma)$ associated to the labeled graph $\Gamma$ is defined by
\[
A_\Gamma=\left\langle V(\Gamma) \mid \underbrace{st\cdots}_{m(s,t)}=\underbrace{ts\cdots}_{m(s,t)} \textrm{ for all } (s,t)\in E(\Gamma) \right\rangle.
\]
Let $S=V(\Gamma)$.  Given a collection of vertices $T\subset S$, the subgroup $A_T$ of $A_\Gamma$ generated by $T$ is isomorphic to the Artin group associated to the subgraph of $\Gamma$ spanned by $T$.  An Artin group is \emph{spherical} if the associated Coxeter group (which is formed by adding the relation $s^2=1$ for all $s\in S$) is finite.  An important larger class of Artin group are the FC-type Artin groups, which were first introduced by Charney--Davis \cite{CharneyDavis}.  An Artin group is \emph{FC-type} if $A_T$ is spherical if and only if $T$ is a complete subgraph.

An Artin group $A_\Gamma$ acts by isometries on its \emph{Deligne complex} $\mathcal D(A_\Gamma)$.  The Deligne complex (called the \emph{modified} Deligne complex by Charney--Davis) is a cubical complex whose vertices  are cosets $aA_T$ where $A_T$ is spherical.  Whenever $T\subset T'$ and $\#(T'-T) = 1$, an edge is added between the cosets $aA_T$ and $aA_{T'}$; higher-dimensional cubes are filled in as their $1$--skeleta appear.  
  Charney--Davis show in \cite{CharneyDavis} that the Deligne complex $\mathcal D(A_\Gamma)$ is CAT$(0)$ exactly when  $A_\Gamma$ is FC-type.

Chatterji--Martin show that FC-type Artin groups are acylindrically hyperbolic  by applying the following general result  about groups acting on CAT$(0)$ cube complexes, whose proof relies on \cite[Theorem~1.2]{Martin2}.  We refer the reader to \cite{ChatterjiMartin} for the definitions of the terms appearing in theorem. The action of $A_\Gamma\curvearrowright \mathcal D(A_\Gamma)$ satisfies the conditions in the first sentence by \cite[Proposition~5.1]{ChatterjiMartin}.

\begin{theorem}[{\cite[Theorem~1.1]{ChatterjiMartin}}]\label{thm:CM1.1}
Let $G$ be a group acting essentially and non-elementarily on an irreducible finite-dimensional CAT$(0)$ cube complex.  If there exist two hyperplanes whose stabilizers intersect along a finite subgroup, then $G$ admits an acylindrical action on a hyperbolic space.
\end{theorem}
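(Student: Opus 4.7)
The plan is to produce a loxodromic WPD element for some action of $G$ on a Gromov hyperbolic space, which by Osin's characterization forces $G$ to be acylindrically hyperbolic. The cube complex itself is not generically hyperbolic, so the hyperbolic space should be built from it; a natural choice is the contact graph of the CAT$(0)$ cube complex $X$ (or a suitable cone-off of $X$), on which $G$ acts and on which rank-one elements of $X$ act loxodromically.

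First I would extract a rank-one isometry. The hypotheses that the action is essential, non-elementary, and on an irreducible finite-dimensional CAT$(0)$ cube complex are exactly the inputs to Caprace--Sageev's double-skewering machinery, which produces an element $g\in G$ whose CAT$(0)$ axis is rank-one (does not bound a half-flat). Rank-one isometries translate loxodromically on the contact graph (or equivalently the cone-off of $X$ along its hyperplane carriers), giving a loxodromic element for a $G$--action on a hyperbolic space.

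Next I would upgrade loxodromic to WPD, and this is where the hypothesis on the pair of hyperplanes with finite stabilizer intersection is used. The idea is: given two hyperplanes $\hat h_1, \hat h_2$ with $\textrm{Stab}(\hat h_1)\cap \textrm{Stab}(\hat h_2)$ finite, use the non-elementary and essential action to translate them (by appropriately chosen elements of $G$) into hyperplanes $k_1\hat h_1,k_2\hat h_2$ that are perpendicular to the axis of $g$ (or of a large power of $g$) and crossed by that axis at points separated by a long subsegment. An element $\gamma$ that moves a sufficiently long subarc of the axis a small distance must, after replacing $\gamma$ by a bounded-index coset, preserve each of $k_1\hat h_1$ and $k_2\hat h_2$ setwise, which conjugates it into a bounded multiple of $\textrm{Stab}(k_1\hat h_1)\cap \textrm{Stab}(k_2\hat h_2)$. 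This is finite by assumption, so only finitely many $\gamma$ can nearly fix a long subsegment of the axis: this is exactly the WPD condition for $g$. The technical step of turning ``preserves two hyperplanes'' into ``lies in a finite set'' is where one invokes Martin's Theorem~1.2 (rotative/hyperplane rigidity for group actions on CAT$(0)$ cube complexes).

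The principal obstacle, therefore, is the geometric step of arranging the two hyperplanes to be transverse to the axis of some power of $g$ and far apart along it; the existence of such a configuration relies on the essentiality of the action (hyperplanes have unbounded skewering orbits) and on non-elementariness (enough independent loxodromics are available to position the hyperplanes as desired). Once WPD is established, Osin's theorem gives acylindrical hyperbolicity immediately.
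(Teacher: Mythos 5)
There is a genuine gap, and it sits exactly where you place the weight of the argument. You translate the two given hyperplanes by \emph{different} elements $k_1,k_2$ so that $k_1\hat h_1$ and $k_2\hat h_2$ cross the axis of your independently chosen rank-one element $g$, and then assert that $\Stab(k_1\hat h_1)\cap\Stab(k_2\hat h_2)$ is ``finite by assumption.'' It is not: this intersection equals $k_1\Stab(\hat h_1)k_1^{-1}\cap k_2\Stab(\hat h_2)k_2^{-1}$, which is not conjugate to $\Stab(\hat h_1)\cap\Stab(\hat h_2)$ unless $k_1=k_2$, and the hypothesis gives no control over it. The argument of Chatterji--Martin (as summarized in Section~\ref{sec:examples}) avoids this by making the WPD candidate an element that double-skewers the given pair itself, produced via Caprace--Sageev's Sector Lemma and Double-Skewering Lemma; then an element coarsely fixing a long subsegment of the axis is forced to stabilize a \emph{common} translate $g^i\hat h_1, g^i\hat h_2$ of the pair, so the relevant intersection is a conjugate of the given finite group. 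Your configuration must be arranged so that only common translates of the pair arise, and your sketch does not do this.

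The second problem is the choice of hyperbolic space. As noted after Theorem~\ref{thm:CM1.1}, the cube complex itself is not hyperbolic, and Chatterji--Martin's proof first produces a loxodromic WPD isometry of the cube complex with strongly contracting axis and then invokes the Bestvina--Bromberg--Fujiwara projection complex machinery to obtain an action on a hyperbolic space with a loxodromic WPD element. Your shortcut via the contact graph has two unaddressed issues: (i) the element whose WPD property you can actually verify (a double-skewerer of the special pair) need not be contracting, hence need not be loxodromic on the contact graph -- contact-graph loxodromy requires skewering a strongly separated pair, which the given pair need not be; and (ii) even if it were, your WPD verification measures displacements along the axis in the cube complex metric, whereas WPD for the contact-graph action must bound the set of $\gamma$ with small \emph{contact-graph} displacement at two far-apart orbit points, and bounded contact-graph displacement permits unbounded cube-complex displacement, so the ``$\gamma$ preserves the two hyperplanes'' step does not follow as stated. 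Either one must prove the WPD inequality directly in the coned-off/contact metric, or one must supply a bridging result (this is precisely the role of the BBF step in the actual proof); your proposal currently does neither.
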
 

The proof of this theorem has two steps.  The first is to show that there exists a loxodromic WPD isometry $g\in G$ with respect to the action on the CAT$(0)$ cube complex whose axis in $G$ is strongly contracting.  Since the CAT$(0)$ cube complex is not required to be hyperbolic, this is not sufficient to prove that $G$ is acylindrically hyperbolic. The second step is to  apply the projection complex machinery of Bestvina--Bromberg--Fujiwara from \cite{BestvinaBrombergFujiwara} to conclude that $g$ is a loxodromic WPD  isometry in an action of $G$ on a hyperbolic space.  Osin shows that this is equivalent to $G$ acting acylindrically on a (possibly different) hyperbolic space \cite[Theorem~1.4]{Osin16}.  The projection complex is not the only hyperbolic space that could be used as the input for Osin's theorem.  Genevois constructs a family of hyperbolic spaces on which $G$ acts by isometries, and the element $g$ will be a loxodromic WPD isometry of infinitely many of these spaces \cite{Genevois}.  (Genevois's construction generalizes Hagen's construction of the contact graph \cite{Hagen}.)

 A graph \emph{has no empty squares} if every circuit of length four has at least one diagonal pair of vertices spanning an edge.  Charney--Crisp \cite[Theorem~4.2]{CharneyCrisp} show that if $A_\Gamma$ is an FC-type Artin group and  $\Gamma$ has no empty squares, then $\mathcal D(A_\Gamma)$ is actually CAT$(-1)$; in particular, it is hyperbolic.  
Thus to prove Theorem \ref{thm:CM1.1} in the special case of an FC-type Artin group whose defining graph has no empty squares, the Deligne complex can be used directly as the input to Osin's theorem.  

In this setting, the following statement can be extracted from the proof of \cite[Theorem~1.1]{ChatterjiMartin}. Given disjoint hyperplanes $\hat H_1,\hat H_2$ in an irreducible CAT$(0)$ cube complex $X$, an automorphism $g$ of $X$  \emph{double-skewers} $\hat H_1$ and $\hat H_2$ if there is a  choice of corresponding nested half spaces $H_1\subset H_2$ such that $H_1\subset H_2\subset gH_1$.
\begin{proposition}[{\cite[Theorem~1.1]{ChatterjiMartin}}] Let $A_\Gamma$ be an Artin group of FC type whose defining graph has no empty squares.  If there exist two disjoint hyperplanes in the Deligne complex $\mathcal D(A_\Gamma)$ whose stabilizers intersect along a finite subgroup, then any element which double-skewers these hyperplanes is a loxodromic WPD element with respect to the action of $A_\Gamma$ on  $\mathcal D(A_\Gamma)$.  
\end{proposition}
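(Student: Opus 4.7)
The plan is to verify separately the two parts of the conclusion: that $g$ acts loxodromically and that $g$ satisfies the WPD condition. Let $g$ double-skewer $\hat H_1$ and $\hat H_2$, fix nested half-spaces $H_1 \subsetneq H_2 \subsetneq gH_1$, and write $F = \operatorname{Stab}(\hat H_1) \cap \operatorname{Stab}(\hat H_2)$, which is finite by hypothesis. Since the defining graph has no empty squares, $\mathcal D(A_\Gamma)$ is CAT$(-1)$ by Charney--Crisp, and hence Gromov hyperbolic with some constant $\delta$.

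Loxodromicity is immediate from iterating double-skewering: we obtain a bi-infinite strictly nested chain $\cdots \subsetneq g^{-1}H_2 \subsetneq H_1 \subsetneq H_2 \subsetneq gH_1 \subsetneq gH_2 \subsetneq \cdots$, so the hyperplanes $\{g^n \hat H_1, g^n \hat H_2\}_{n \in \mathbb Z}$ are pairwise disjoint and linearly ordered across $\mathcal D(A_\Gamma)$. For any vertex $x$, the number of these separating $x$ from $g^n x$ grows linearly in $|n|$, so $d(x, g^n x) \to \infty$ linearly. In the hyperbolic space $\mathcal D(A_\Gamma)$, this forces $g$ to be loxodromic; let $\alpha$ denote a (quasi-)axis, whose hyperplane crossings include all the $g^n \hat H_i$.

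For WPD, fix $\varepsilon \geq 0$ and $x \in \alpha$, and set $\Delta_N := \{\gamma \in A_\Gamma : d(x, \gamma x) \leq \varepsilon \text{ and } d(g^N x, \gamma g^N x) \leq \varepsilon\}$. Quasi-geodesic stability (Theorem~\ref{thm:stability}) gives a constant $C = C(\varepsilon, \delta)$ such that for every $\gamma \in \Delta_N$, the quasi-geodesic $\gamma \alpha|_{[x, g^N x]}$ lies in the $C$-neighborhood of $\alpha|_{[x, g^N x]}$. In particular, $\gamma \hat H_1$ is crossed by $\gamma \alpha$ close to $\gamma x$, so it stays close to $\alpha$ near $x$; a standard fact for hyperbolic cube complexes then implies $\gamma \hat H_1$ is actually crossed by $\alpha$ within a bounded interval of $x$. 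Since only finitely many hyperplanes cross a bounded interval of $\alpha$, there are finitely many candidates for $\gamma \hat H_1$, and similarly for $\gamma \hat H_2$ near $g^N x$. The joint stabilizer in $A_\Gamma$ of the ordered pair $(\hat H_1, \hat H_2)$ is exactly $F$, so for each admissible candidate pair $(\hat K_1, \hat K_2)$ the set of $\gamma$ sending $(\hat H_1, \hat H_2)$ to $(\hat K_1, \hat K_2)$ is either empty or a coset of $F$. Hence $\Delta_N$ is contained in finitely many cosets of the finite group $F$ and is itself finite, verifying WPD.

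The main obstacle is the hyperplane-tracking step: rigorously showing that $\gamma \hat H_1$, which a priori only fellow-travels $\alpha$ near $x$, is actually \emph{crossed} by $\alpha$, so that it belongs to the finite set of hyperplanes transverse to $\alpha$ in a bounded neighborhood of $x$. This matters because the Deligne complex is not locally finite, so bounded neighborhoods carry infinitely many hyperplanes in general; the restriction to hyperplanes crossed by $\alpha$ is what makes the candidate set finite. Establishing this tracking requires a lemma (available from the CAT$(-1)$ plus cubical structure of $\mathcal D(A_\Gamma)$) asserting that a hyperplane persistently close to a long geodesic segment must cross it. Granted this, the rest is bookkeeping with finite hyperplane sets and cosets of $F$.
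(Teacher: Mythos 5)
First, note that the paper does not supply its own proof of this proposition: it is explicitly \emph{extracted} from the proof of \cite[Theorem~1.1]{ChatterjiMartin}, using Charney--Crisp to know that $\mathcal D(A_\Gamma)$ is itself hyperbolic so that only the first (cubical) step of that proof is needed. Your outline follows essentially the same strategy as that cited argument: loxodromicity from the strictly nested chain of half-spaces produced by iterating the double-skewering (fine, granted that $\mathcal D(A_\Gamma)$ is finite dimensional, so the CAT$(0)$ metric is bi-Lipschitz to the wall metric and the separating hyperplanes $g^n\hat H_i$ give linear growth of $d(x,g^nx)$), and WPD by forcing any $\gamma\in\Delta_N$ to carry the skewered pair of hyperplanes into a finite candidate set and then invoking finiteness of $F=\Stab(\hat H_1)\cap\Stab(\hat H_2)$.

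However, the step you yourself flag is a genuine gap, and the lemma you propose to close it does not do so. Fellow-traveling of $\gamma\alpha|_{[x,g^Nx]}$ with $\alpha|_{[x,g^Nx]}$ only tells you that the \emph{single point} where $\gamma\alpha$ crosses $\gamma\hat H_1$ lies near $\alpha$; it does not make the hyperplane ``persistently close to a long segment'' of $\alpha$, and in any case closeness of a convex hyperplane to a long geodesic in a CAT$(-1)$ space does not force a crossing (disjoint convex sets can fellow-travel or be asymptotic). Worse, the conclusion is not even the right one to aim for: $\gamma\hat H_1$ separates $\gamma x$ from $\gamma g^Nx$, but if it happens to be one of the (boundedly many) hyperplanes separating $x$ from $\gamma x$, then $x$ and $g^Nx$ may lie on the same side of it and $\alpha$ need never cross it. The standard repair is a separation-counting argument using finite dimension $D$ of $\mathcal D(A_\Gamma)$: only boundedly many (in terms of $\varepsilon$ and $D$) hyperplanes separate two points at distance at most $\varepsilon$, so for $N$ large there is some $0\le n\le N$ for which \emph{both} $\gamma g^n\hat H_1$ and $\gamma g^n\hat H_2$ separate $x$ from $g^Nx$ and hence are genuinely crossed by the bounded segment $\alpha|_{[x,g^Nx]}$, which meets only finitely many hyperplanes (again by finite dimension --- this, not local finiteness, is what bounds the candidate set). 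Two elements of $\Delta_N$ sending $(g^n\hat H_1,g^n\hat H_2)$ to the same candidate pair differ by an element of $g^nFg^{-n}$, which is finite, so $\Delta_N$ is finite; note that you must work with these conjugates of $F$, not with $F$ alone as in your write-up. With that replacement the argument goes through, but as written the hyperplane-tracking step is both unjustified and aimed at a claim that can fail.
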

Chatterji--Martin use the Sector Lemma \cite[Lemma~5.2]{CapraceSageev} and the Double-Skewering Lemma \cite[Double-Skewering~Lemma]{CapraceSageev} to show that such elements must always exist. However, the proof does not explicitly construct such elements.  We now explicitly construct a family of examples.

The construction involves understanding the intersection pattern of hyperplanes in the Deligne complex.  As described in \cite[Remark~2.2]{MorrisWright},  every hyperplane in $\mathcal D(A_\Gamma)$ is a translate of a hyperplane that crosses an edge $(A_{\emptyset},A_{\{v\}})$ for some vertex $v$ in $\Gamma$.  Such a hyperplane is called a \emph{hyperplane of type $v$}.  The action of $A_\Gamma$ on the Deligne complex preserves hyperplane types, and a hyperplane of type $v$ intersects some hyperplane of type $w$ exactly when $w$ is in the link of $v$ in $\Gamma$.

\begin{lemma}
Let $A_\Gamma$ be an FC-type Artin group whose defining graph $\Gamma$ has diameter at least 3 and no empty squares.  For any two vertices $s,t\in V(\Gamma)$ at distance at least 3 in $\Gamma$ and any integers $m,n\in\mathbb Z\setminus\{0\}$, the product $s^mt^n$ is a loxodromic WPD element with respect to the action of $A_\Gamma$ on the Deligne complex.  Moreover, if $m',n'\in\mathbb Z\setminus\{0\}$ and $(m',n')\neq (m,n)$, then the elements $s^{m'}t^{n'}$ and $s^mt^n$ are independent.
\end{lemma}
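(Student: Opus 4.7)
The plan is to apply the framework of Chatterji--Martin \cite{ChatterjiMartin} via double-skewering of disjoint hyperplanes with finitely intersecting stabilizers. Since $\Gamma$ has no empty squares, by \cite{CharneyCrisp} the Deligne complex $\mathcal{D}(A_\Gamma)$ is CAT$(-1)$, hence Gromov hyperbolic, and every isometry is elliptic, parabolic, or loxodromic. I will exhibit two such hyperplanes that are double-skewered by $g = s^m t^n$; by the proof of \cite[Theorem~1.1]{ChatterjiMartin} this will yield that $g$ is a loxodromic WPD element for $A_\Gamma \curvearrowright \mathcal{D}(A_\Gamma)$.

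Take $\hat{H}_1$ to be the hyperplane of type $s$ dual to the edge $(A_\emptyset, A_s)$ and $\hat{H}_2$ the hyperplane of type $t$ dual to $(A_\emptyset, A_t)$. As noted just before the statement, two hyperplanes of types $v$ and $w$ cross if and only if $v$ and $w$ are adjacent in $\Gamma$; since $s, t$ are at distance at least $3$, the hyperplanes $\hat{H}_1, \hat{H}_2$ are disjoint. An analysis of the cubical structure of $\mathcal{D}(A_\Gamma)$ (cf.\ \cite{MorrisWright}) shows that the setwise stabilizer of $\hat{H}_v$ is contained in the standard parabolic subgroup $A_{\{v\}\cup \mathrm{lk}(v)}$ on the closed star of $v$. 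Standard parabolic subgroups of FC-type Artin groups intersect in the parabolic of the intersection of their defining subsets; since $s, t$ are at distance at least $3$, the closed stars of $s$ and $t$ are disjoint in $\Gamma$, and so $\Stab(\hat{H}_1) \cap \Stab(\hat{H}_2) = A_\emptyset = \{1\}$. To check that $g$ double-skewers these hyperplanes, I would trace the combinatorial path
\[
A_\emptyset \to A_s \to s^m A_\emptyset \to s^m A_t \to g A_\emptyset
\]
and its $g$-translates in $\mathcal{D}(A_\Gamma)$; each successive edge crosses a distinct hyperplane (a bookkeeping exercise in parallelism classes of edges), and the CAT$(-1)$ geometry then yields that, after orienting half-spaces so that $A_\emptyset \in H_1$ and both $A_\emptyset, A_s \in H_2$, one has $H_1 \subsetneq H_2 \subsetneq gH_1$. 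Verifying this nesting carefully is the main technical obstacle.

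For independence, since $s, t$ are non-adjacent in $\Gamma$ they satisfy no Artin relation, so $\langle s, t\rangle \cong F_2$ inside $A_\Gamma$. If $s^m t^n$ and $s^{m'} t^{n'}$ shared a fixed point on $\partial \mathcal{D}(A_\Gamma)$, then by the WPD property they would together generate a virtually cyclic subgroup of $A_\Gamma$, which inside the free subgroup $\langle s, t\rangle$ must be cyclic; hence both would be powers of a common element of $\langle s,t\rangle$. However, $s^m t^n$ is cyclically reduced of syllable length two in $F_2$, so its root in $F_2$ is itself, forcing $(m,n) = (m',n')$, contrary to hypothesis.
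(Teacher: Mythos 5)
Your overall strategy is the same as the paper's: exhibit two disjoint hyperplanes with trivial stabilizer intersection that are double-skewered by $s^mt^n$, and invoke the proof of Chatterji--Martin's Theorem~1.1, which applies here because the no-empty-squares hypothesis makes $\mathcal D(A_\Gamma)$ CAT$(-1)$. However, there is a genuine gap: the double-skewering itself. You say you ``would trace'' the path $A_\emptyset,\ A_{\{s\}},\ s^mA_\emptyset,\ s^mA_{\{t\}},\ s^mt^nA_\emptyset$ and that ``verifying this nesting carefully is the main technical obstacle'' --- but that verification is exactly the content of the lemma, and it is left undone. Moreover, the route you sketch is misdirected: knowing that consecutive edges of the path are dual to distinct hyperplanes does not by itself give $H_1\subset H_2\subset gH_1$, and it is not CAT$(-1)$ geometry that produces the nesting but cube-complex combinatorics. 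What is actually needed (and what the paper does) is: first, $\hat H_t$ and $s^mt^n\hat H_s$ are disjoint because $s\notin\operatorname{lk}(t)$, so the corresponding half-spaces are nested one way or the other; second, the wrong nesting $s^mt^nH_t\subset H_s$ is ruled out because it would force $\hat H_s$ to cross the subpath $A_{\{s\}},\ s^mA_\emptyset,\ s^mA_{\{t\}}$ exactly once, which is impossible since $\hat H_s$ has type $s$ (so it cannot cross the edge $(s^mA_\emptyset,s^mA_{\{t\}})$) and hyperplanes in a CAT$(0)$ cube complex do not self-osculate (so, as $\hat H_s$ already crosses $(A_\emptyset,A_{\{s\}})$, it cannot also cross $(A_{\{s\}},s^mA_\emptyset)$). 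One also needs that the $\langle s^mt^n\rangle$--translates of this segment form a geodesic axis (Chatterji--Martin, Lemma~5.5, as the paper notes). Similarly, your claim that $\Stab(\hat H_s)\cap\Stab(\hat H_t)$ is trivial rests on an unproved ``analysis of the cubical structure''; the paper handles this by citing the proof of Chatterji--Martin's Theorem~1.2.

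Your independence argument does differ from the paper's (which reruns the proof of Chatterji--Martin, Lemma~5.4) and is essentially sound, though it is conditional on the WPD statement you have not established, and it silently uses the standard fact that a loxodromic element sharing an endpoint with a loxodromic WPD element lies in the latter's virtually cyclic elementary closure; that deserves a citation rather than ``by the WPD property''. The reduction to $\langle s,t\rangle\cong F_2$ (van der Lek's theorem on standard parabolics, recorded in the paper) and the syllable-length computation showing $s^mt^n$ is not a proper power are fine, and this gives a pleasantly self-contained alternative to the paper's citation --- but it does not repair the central missing step above.
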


\begin{proof}
If $\Gamma$ is not connected, then $A_\Gamma$ splits as a free product and the Deligne complex $\mathcal D(A_\Gamma)$ has the structure of a tree of spaces.  If $s,t$ are in different connected components of $\Gamma$, the result is clear.  Thus we may assume that $\Gamma$ is connected.

Let $\hat{H}_s$ be the hyperplane of $\mathcal D(A_\Gamma)$ which crosses the edge $(A_\emptyset,A_{\{s\}})$, and let $\hat{H}_t$ be the hyperplane which crosses the edge $(A_\emptyset,A_{\{t\}})$.
As noted in the proof of \cite[Theorem~1.2]{ChatterjiMartin}, since $s$ and $t$ are at distance at least 3 in $\Gamma$, we have $\Stab(\hat{H}_s)\cap\Stab(\hat{H}_t)=\emptyset$.  Moreover, $s^mt^n$ is a loxodromic isometry of $\mathcal D(A_\Gamma)$ which has a geodesic axis formed by the union of the images of the segment shown in Figure \ref{fig:Deligneaxis} under $\langle s^mt^n\rangle$ by \cite[Lemma~5.5]{ChatterjiMartin}. (Strictly speaking, \cite[Lemma~5.5]{ChatterjiMartin}  only applies to the element $st$, but the same proof will give the result for $s^mt^n$; this is explicitly stated for the element $s^2t$ in the proof of \cite[Lemma~5.4]{ChatterjiMartin}.)

\begin{figure}[htbp]
\centering
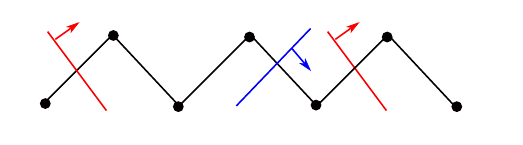 
\caption{A geodesic axis of $s^mt^n$ in $\mathcal D(A_\Gamma)$, with the relevant half spaces.}
\label{fig:Deligneaxis}
\end{figure}

Fix $m,n\in \mathbb Z\setminus\{0\}$.  Since $t$ is not contained in the link of $s$, we have $\hat H_s\cap\hat H_t=\emptyset$.  Thus there exist half spaces $H_s$ and $H_t$ satisfying $H_t\subset H_s$ (see Figure \ref{fig:Deligneaxis}).  We will show that $s^mt^n$ double skewers $ H_s$ and $ H_t$, that is, we will show that $H_t\subset H_s\subset s^mt^nH_t$.  First, notice that since  $s$ is not contained in the link of $t$, the hyperplanes $\hat H_s$ and $s^mt^n\hat H_t$  are disjoint. Thus $H_s\subset s^mt^n H_t$ or $s^mt^n H_t\subset H_s$.  If $s^mt^n H_t\subset H_s$, then $\hat H_s$ must intersect the path $A_{\{s\}}, s^mA_\emptyset, s^mA_{\{t\}}$ exactly once.  However, since $\hat H_s$ is of type $s$, it does not cross the edge $(s^mA_\emptyset, s^mA_{\{t\}})$.  Moreover, since hyperplanes in a CAT$(0)$ cube complex do not self-osculate and $\hat H_s$ crosses the edge $(A_\emptyset,A_{\{s\}})$, we see that $\hat H_s$  cannot cross $(A_{\{s\}}, s^mA_\emptyset)$.  Thus $H_s\subset s^mt^n H_t$, and we conclude that $s^mt^n$ double-skewers ${H_s}$ and ${H_t}$.

Therefore $s^mt^n$ is a loxodromic WPD element with respect to the action of $A_\Gamma$ on the Deligne complex by \cite[Theorem~1.1]{ChatterjiMartin}.

The proof of \cite[Lemma 5.4]{ChatterjiMartin} shows that $st$ and $s^2t$ are independent, but the proof goes through as written for elements $s^{m'}t^{n'}$ and $s^mt^n$ whenever $(m',n')\neq (m,n)$.
\end{proof}

Let $A_\Gamma$ be an FC-type Artin group whose defining graph $\Gamma$ has diameter at least 3 and no empty squares.  Let $s,t\in V(\Gamma)$ be at distance at least 3, and fix distinct elements $h_1,\dots, h_k\in\{s^mt^n\mid m,n\in \mathbb Z\setminus\{0\}\}$.  By Corollary \ref{prop:WPDtoAQI} there is a constant $D$ such that for any $d_1,\dots, d_k\geq D$ and $H=\langle h_1^{d_1},\dots,h_k^{d_k}\rangle$, we have that $(A_\Gamma, \mathcal D(A_\Gamma), H)$ is an A/QI triple.  In particular, the subgroup $H$ is stable.
\end{example}

\section{Questions}\label{sec:questions}

The first question we pose asks for a generalization of Proposition \ref{prop:intersectedtriples}.
\begin{question}
  Suppose $(G,X,H)$ and $(G,Y,K)$ are A/QI triples but $X$ and $Y$ are not equivariantly quasi-isometric.  Does there exist an action $G\acts Z$ such that $(G,Z,H\cap K)$ is an A/QI triple?  
\end{question}

The next question asks if there is a more general combination theorem than the one presented in Theorem \ref{thm:combination}. 
\begin{question}
Suppose $(G,X,H)$ and $(G,X,K)$ are  A/QI triples and $H_0\leq H$ and $K_0\leq K$ are quasi-convex subgroups such that every element of $H_0 \setminus K$ and every element of $K_0\setminus H$ translates the basepoint of $X$ a large distance. Let $P=\langle H_0,K_0\rangle$.  Is $(G,X,P)$ an A/QI triple? 
\end{question}

Corollary \ref{cor:wpd} shows that if $(G,X,H)$ is an A/QI triple and $G\curvearrowright X$ is WPD, then $G\curvearrowright \hat X$ is also WPD. 
Since an acylindrical action is a WPD action, we can conclude that if $G\curvearrowright X$ is acylindrical, then $G\curvearrowright \hat X$ is  WPD.  This motivates the following question.
\begin{question}
Suppose that $(G,X,H)$ is an A/QI triple where $G\acts X$ is actually acylindrical.  Form the cone-off $\hat{X}$ as in Section~\ref{sec:coneoff}.  Is the action $G\acts \hat{X}$ also acylindrical?
\end{question}

Example \ref{ex:outfn} shows that many known convex cocompact subgroups of $\OutFn$ give A/QI triples.  Moreover, every convex cocompact subgroup of $\OutFn$ quasi-isometrically embeds in the free factor complex. 
\begin{question}
Does $\OutFn$ always act acylindrically along the image of a convex cocompact subgroup in the free factor complex? 
\end{question}

As well as finite height, Antol\'in--Mj--Sisto--Taylor show finite width and bounded packing for stable subgroups \cite{AMST19}.  We would like to know if these properties hold for $H$ in an A/QI triple $(G,X,H)$ even when $G$ is not finitely generated.
\begin{question}
When $G$ is not finitely generated, but there is an A/QI triple $(G,X,H)$, does $H$ have finite width?  If $G$ is countable, does $H$ have bounded packing?
\end{question}

Part of the usefulness of finite height in context of quasi-convex subgroups of hyperbolic groups is that one can pass to an \emph{almost malnormal core} of a finite collection of quasi-convex groups.  This core is a collection of quasi-convex subgroups of height $1$, so the ambient group is hyperbolic relative to this collection, and one can attempt to run various inductive arguments on height using Dehn filling (as for example in \cite{AGM09} and the appendix to \cite{Agol13}).  Rather than aiming to produce a relatively hyperbolic structure on $G$, one might hope to produce a hyperbolically embedded collection of subgroups.  Such a collection would necessarily have height one.
As we have mentioned, the $H$ in an A/QI triple $(G,X,H)$ is not necessarily hyperbolically embedded as its height may be strictly bigger than one.  In order to pass to a well-behaved almost malnormal core, one would like to know the answer to the following question.
\begin{question}
  Consider the collection of infinite subgroups of $H$ of the form $\bigcap_{i=1}^{k} g_k H g_k^{-1}$.  Do these subgroups fall in finitely many $H$--conjugacy classes?
  Do the minimal such subgroups give rise to a hyperbolically embedded collection in $G$ (after possibly replacing each by a finite index supergroup and choosing one per $G$--conjugacy class)?
\end{question}

Using the machinery of hierarchically hyperbolic groups, Spriano in \cite{Spriano} proves Theorem~\ref{thm:aqiboundary} in the special case where $\mc H$ is a collection of infinite quasi-convex subspaces which can be extended to form a \emph{weak factor system}.  Indeed Spriano gives finer information about the boundary in this case, decomposing it into the boundaries of the various elements of the factor system, one of which is $\partial \hat X$.  When $X$ is the Cayley graph of a hyperbolic group and $\mc H$ is the set of cosets of a finite collection of infinite quasi-convex subgroups, Spriano shows that $\mc H$ extends to a weak factor system \cite[Section 5.2]{Spriano}.  Spriano's argument seems to strongly use properness.  In general, $X$ may not be a proper metric space, and thus we ask the following question:

\begin{question} Under the assumptions of Theorem~\ref{thm:aqiboundary}, can the set of translates $gHx$ can be extended to form a weak factor system?
\end{question}

\newcommand{\etalchar}[1]{$^{#1}$}


\begin{thebibliography}{GMRS98}

\bibitem[Abb16]{Abbott16}
C.~R. Abbott.
\newblock Not all finitely generated groups have universal acylindrical
  actions.
\newblock {\em Proc. Amer. Math. Soc.}, 144(10):4151--4155, 2016.

\bibitem[ABB{\etalchar{+}}17]{ABD}
C.~Abbott, J.~Behrstock, D.~Berlyne, M.~G. Durham, and J.~Russell.
\newblock Largest acylindrical actions and stability in hierarchically
  hyperbolic groups, 2017.
\newblock To appear in Transactions of the AMS.

\bibitem[ABO19]{ABO}
C.~Abbott, S.~H. Balasubramanya, and D.~Osin.
\newblock Hyperbolic structures on groups.
\newblock {\em Algebr. Geom. Topol.}, 19(4):1747--1835, 2019.

\bibitem[ADT17]{AougabDurhamTaylor}
T.~Aougab, M.~G. Durham, and S.~J. Taylor.
\newblock Pulling back stability with applications to {${\rm Out}(F_n)$} and
  relatively hyperbolic groups.
\newblock {\em J. Lond. Math. Soc. (2)}, 96(3):565--583, 2017.

\bibitem[AGM09]{AGM09}
I.~Agol, D.~Groves, and J.~F. Manning.
\newblock Residual finiteness, {QCERF} and fillings of hyperbolic groups.
\newblock {\em Geom. Topol.}, 13(2):1043--1073, 2009.

\bibitem[Ago13]{Agol13}
I.~Agol.
\newblock The virtual {H}aken conjecture.
\newblock {\em Doc. Math.}, 18:1045--1087, 2013.
\newblock with an appendix by Ian Agol, Daniel Groves and Jason Manning.

\bibitem[AMST19]{AMST19}
Y.~Antol\'{\i}n, M.~Mj, A.~Sisto, and S.~J. Taylor.
\newblock Intersection properties of stable subgroups and bounded cohomology.
\newblock {\em Indiana Univ. Math. J.}, 68(1):179--199, 2019.

\bibitem[Bal20]{Balasubramanya}
S.~H. Balasubramanya.
\newblock Finite extensions of {$\mathcal H$}- and {$\mathcal{AH}$}-accessible
  groups.
\newblock {\em Topology Proc.}, 56:297--304, 2020.

\bibitem[BBF15]{BestvinaBrombergFujiwara}
M.~Bestvina, K.~Bromberg, and K.~Fujiwara.
\newblock Constructing group actions on quasi-trees and applications to mapping
  class groups.
\newblock {\em Publ. Math. Inst. Hautes \'{E}tudes Sci.}, 122:1--64, 2015.

\bibitem[BBF16]{BestvinaBrombergFujiwaraSCL}
M.~Bestvina, K.~Bromberg, and K.~Fujiwara.
\newblock Stable commutator length on mapping class groups.
\newblock {\em Ann. Inst. Fourier (Grenoble)}, 66(3):871--898, 2016.

\bibitem[BH99]{BH}
M.~R. Bridson and A.~Haefliger.
\newblock {\em Metric spaces of non-positive curvature}, volume 319 of {\em
  Grundlehren der Mathematischen Wissenschaften [Fundamental Principles of
  Mathematical Sciences]}.
\newblock Springer-Verlag, Berlin, 1999.

\bibitem[BM91]{BestvinaMess}
M.~Bestvina and G.~Mess.
\newblock The boundary of negatively curved groups.
\newblock {\em J. Amer. Math. Soc.}, 4(3):469--481, 1991.

\bibitem[Bow08]{Bowditch08}
B.~H. Bowditch.
\newblock Tight geodesics in the curve complex.
\newblock {\em Invent. Math.}, 171(2):281--300, 2008.

\bibitem[Bow12]{Bowditch12}
B.~H. Bowditch.
\newblock Relatively hyperbolic groups.
\newblock {\em Internat. J. Algebra Comput.}, 22(3):1250016, 66, 2012.

\bibitem[Can18]{Cantat}
S.~Cantat.
\newblock The {C}remona group.
\newblock In {\em Algebraic geometry: {S}alt {L}ake {C}ity 2015}, volume~97 of
  {\em Proc. Sympos. Pure Math.}, pages 101--142. Amer. Math. Soc., Providence,
  RI, 2018.

\bibitem[CC07]{CharneyCrisp}
R.~Charney and J.~Crisp.
\newblock Relative hyperbolicity and {A}rtin groups.
\newblock {\em Geom. Dedicata}, 129:1--13, 2007.

\bibitem[CD95]{CharneyDavis}
R.~Charney and M.~W. Davis.
\newblock The {$K(\pi,1)$}-problem for hyperplane complements associated to
  infinite reflection groups.
\newblock {\em J. Amer. Math. Soc.}, 8(3):597--627, 1995.

\bibitem[CL13]{CantatLamy}
S.~Cantat and S.~Lamy.
\newblock Normal subgroups in the {C}remona group.
\newblock {\em Acta Math.}, 210(1):31--94, 2013.
\newblock With an appendix by Yves de Cornulier.

\bibitem[CM19]{ChatterjiMartin}
I.~Chatterji and A.~Martin.
\newblock A note on the acylindrical hyperbolicity of groups acting on {${\rm
  CAT}(0)$} cube complexes.
\newblock In {\em Beyond hyperbolicity}, volume 454 of {\em London Math. Soc.
  Lecture Note Ser.}, pages 160--178. Cambridge Univ. Press, Cambridge, 2019.

\bibitem[CS11]{CapraceSageev}
P.-E. Caprace and M.~Sageev.
\newblock Rank rigidity for {CAT}(0) cube complexes.
\newblock {\em Geom. Funct. Anal.}, 21(4):851--891, 2011.

\bibitem[DGO17]{DGO}
F.~Dahmani, V.~Guirardel, and D.~Osin.
\newblock Hyperbolically embedded subgroups and rotating families in groups
  acting on hyperbolic spaces.
\newblock {\em Mem. Amer. Math. Soc.}, 245(1156):v+152, 2017.

\bibitem[DM17]{DahmaniMj}
F.~Dahmani and M.~Mj.
\newblock Height, graded relative hyperbolicity and quasiconvexity.
\newblock {\em J. \'{E}c. polytech. Math.}, 4:515--556, 2017.

\bibitem[DT15]{DurhamTaylor15}
M.~G. Durham and S.~J. Taylor.
\newblock Convex cocompactness and stability in mapping class groups.
\newblock {\em Algebr. Geom. Topol.}, 15(5):2839--2859, 2015.

\bibitem[DT17]{DowdallTaylor}
S.~Dowdall and S.~J. Taylor.
\newblock The co-surface graph and the geometry of hyperbolic free group
  extensions.
\newblock {\em J. Topol.}, 10(2):447--482, 2017.

\bibitem[Far98]{Farb}
B.~Farb.
\newblock Relatively hyperbolic groups.
\newblock {\em Geom. Funct. Anal.}, 8(5):810--840, 1998.

\bibitem[GdlH90]{GdlH}
E.~Ghys and P.~de~la Harpe, editors.
\newblock {\em Sur les groupes hyperboliques d'apr\`es {M}ikhael {G}romov},
  volume~83 of {\em Progress in Mathematics}.
\newblock Birkh\"{a}user Boston, Inc., Boston, MA, 1990.
\newblock Papers from the Swiss Seminar on Hyperbolic Groups held in Bern,
  1988.

\bibitem[Gen19]{Genevois}
A.~Genevois.
\newblock Hyperbolicities in {${\rm CAT}(0)$} cube complexes.
\newblock {\em Enseign. Math.}, 65(1-2):33--100, 2019.

\bibitem[GMRS98]{GMRS}
R.~Gitik, M.~Mitra, E.~Rips, and M.~Sageev.
\newblock Widths of subgroups.
\newblock {\em Trans. Amer. Math. Soc.}, 350(1):321--329, 1998.

\bibitem[Gro87]{Gromov:HG}
M.~Gromov.
\newblock Hyperbolic groups.
\newblock In {\em Essays in group theory}, volume~8 of {\em Math. Sci. Res.
  Inst. Publ.}, pages 75--263. Springer, New York, 1987.

\bibitem[Hag14]{Hagen}
M.~F. Hagen.
\newblock Weak hyperbolicity of cube complexes and quasi-arboreal groups.
\newblock {\em J. Topol.}, 7(2):385--418, 2014.

\bibitem[Ham16]{Ham16}
U.~Hamenst\"{a}dt.
\newblock Hyperbolic relatively hyperbolic graphs and disk graphs.
\newblock {\em Groups Geom. Dyn.}, 10(1):365--405, 2016.

\bibitem[HM19]{HandelMosher:FreeSplitting}
M.~Handel and L.~Mosher.
\newblock The free splitting complex of a free group, {II}: {L}oxodromic outer
  automorphisms.
\newblock {\em Trans. Amer. Math. Soc.}, 372(6):4053--4105, 2019.

\bibitem[HM21]{HandelMosher:WWPD}
M.~Handel and L.~Mosher.
\newblock Second bounded cohomology and {WWPD}.
\newblock {\em Kyoto J. Math.}, 61(4):873--904, 2021.

\bibitem[HW09]{HruskaWise09}
G.~C. Hruska and D.~T. Wise.
\newblock Packing subgroups in relatively hyperbolic groups.
\newblock {\em Geom. Topol.}, 13(4):1945--1988, 2009.

\bibitem[HW14]{HruskaWise14}
G.~C. Hruska and D.~T. Wise.
\newblock Finiteness properties of cubulated groups.
\newblock {\em Compos. Math.}, 150(3):453--506, 2014.

\bibitem[KB02]{KB}
I.~Kapovich and N.~Benakli.
\newblock Boundaries of hyperbolic groups.
\newblock In {\em Combinatorial and geometric group theory ({N}ew {Y}ork,
  2000/{H}oboken, {NJ}, 2001)}, volume 296 of {\em Contemp. Math.}, pages
  39--93. Amer. Math. Soc., Providence, RI, 2002.

\bibitem[KR14]{KapovichRafi}
I.~Kapovich and K.~Rafi.
\newblock On hyperbolicity of free splitting and free factor complexes.
\newblock {\em Groups Geom. Dyn.}, 8(2):391--414, 2014.

\bibitem[L{\"{o}}h17]{Loh}
C.~L{\"{o}}h.
\newblock {\em Geometric group theory}.
\newblock Universitext. Springer, Cham, 2017.
\newblock An introduction.

\bibitem[Lon16]{Lonjou}
A.~Lonjou.
\newblock Non simplicit\'{e} du groupe de {C}remona sur tout corps.
\newblock {\em Ann. Inst. Fourier (Grenoble)}, 66(5):2021--2046, 2016.

\bibitem[Man86]{Manin}
Y.~I. Manin.
\newblock {\em Cubic forms}, volume~4 of {\em North-Holland Mathematical
  Library}.
\newblock North-Holland Publishing Co., Amsterdam, second edition, 1986.
\newblock Algebra, geometry, arithmetic, Translated from the Russian by M.
  Hazewinkel.

\bibitem[Man14]{Mann}
B.~Mann.
\newblock Hyperbolicity of the cyclic splitting graph.
\newblock {\em Geom. Dedicata}, 173:271--280, 2014.

\bibitem[Mar21]{Martin2}
A.~Martin.
\newblock Acylindrical actions on {CAT}(0) square complexes.
\newblock {\em Groups Geom. Dyn.}, 15(1):335--369, 2021.

\bibitem[Min05]{Minasyan}
A.~Minasyan.
\newblock On residualizing homomorphisms preserving quasiconvexity.
\newblock {\em Comm. Algebra}, 33(7):2423--2463, 2005.

\bibitem[MMS20]{MMS20}
J.~Maher, H.~Masai, and S.~Schleimer.
\newblock Quotients of the curve complex, 2020.
\newblock Preprint, \href{http://arxiv.org/abs/2010.13360}{arXiv:2010.13360}.

\bibitem[MW21]{MorrisWright}
R.~Morris-Wright.
\newblock Parabolic subgroups in {FC}-type {A}rtin groups.
\newblock {\em J. Pure Appl. Algebra}, 225(1):Paper No. 106468, 13, 2021.

\bibitem[Osi06]{Osin06}
D.~V. Osin.
\newblock Relatively hyperbolic groups: intrinsic geometry, algebraic
  properties, and algorithmic problems.
\newblock {\em Mem. Amer. Math. Soc.}, 179(843):vi+100, 2006.

\bibitem[Osi16]{Osin16}
D.~Osin.
\newblock Acylindrically hyperbolic groups.
\newblock {\em Trans. Amer. Math. Soc.}, 368(2):851--888, 2016.

\bibitem[Sel97]{Sela97}
Z.~Sela.
\newblock Acylindrical accessibility for groups.
\newblock {\em Invent. Math.}, 129(3):527--565, 1997.

\bibitem[Sho91]{Short91}
H.~Short.
\newblock Quasiconvexity and a theorem of {H}owson's.
\newblock In {\em Group theory from a geometrical viewpoint ({T}rieste, 1990)},
  pages 168--176. World Sci. Publ., River Edge, NJ, 1991.

\bibitem[Sis16]{Sisto16}
A.~Sisto.
\newblock Quasi-convexity of hyperbolically embedded subgroups.
\newblock {\em Math. Z.}, 283(3-4):649--658, 2016.

\bibitem[Spr17]{Spriano}
D.~Spriano.
\newblock Hyperbolic {H}{H}{S} {I}: Factor systems and quasi-convex subgroups,
  2017.
\newblock Preprint, \href{http://arxiv.org/abs/1711.10931}{arXiv:1711.10931}.

\bibitem[Swe01]{Swenson01}
E.~L. Swenson.
\newblock Quasi-convex groups of isometries of negatively curved spaces.
\newblock {\em Topology Appl.}, 110(1):119--129, 2001.
\newblock Geometric topology and geometric group theory (Milwaukee, WI, 1997).

\bibitem[Tra19]{Tran}
H.~C. Tran.
\newblock On strongly quasiconvex subgroups.
\newblock {\em Geom. Topol.}, 23(3):1173--1235, 2019.

\bibitem[Val21]{Val}
M.~Valiunas.
\newblock Acylindrical hyperbolicity of groups acting on quasi-median graphs
  and equations in graph products.
\newblock {\em Groups Geom. Dyn.}, 15(1):143--195, 2021.

\bibitem[Zar58]{Zariski}
O.~Zariski.
\newblock The problem of minimal models in the theory of algebraic surfaces.
\newblock {\em Amer. J. Math.}, 80:146--184, 1958.

\end{thebibliography}
\end{document}